\newcommand{\Z}{\mathbf{Z}}
\newcommand{\R}{\mathbf{R}}
\newcommand{\be}{\begin{equation}}
\newcommand{\ee}{\end{equation}}
\newcommand{\lip}{\text{\rm Lip}_\sigma }
\renewcommand{\P}{\mathrm{P}}
\newcommand{\E}{\mathrm{E}}
\newcommand{\F}{\mathcal{F}}
\newcommand{\1}{\boldsymbol{1}}
\renewcommand{\d}{{\rm d}}
\newcommand{\e}{{\rm e}}
\renewcommand{\leq}{\leqslant}
\renewcommand{\le}{\leqslant}
\renewcommand{\geq}{\geqslant}
\renewcommand{\ge}{\geqslant}
\author{Daniel Conus\\Lehigh University
\and Mathew Joseph\\University of Utah
\and Davar Khoshnevisan\\University of Utah
\and  Shang-Yuan Shiu\\Academica Sinica}
\title{ On the chaotic character of\\the stochastic heat equation, II\thanks{%
	Research supported in part by 
	the NSFs grant DMS-0747758 (M.J.) and DMS-1006903 (D.K.).}}
\date{November 20, 2011}
\newtheorem{stat}{Statement}[section]
\newtheorem{proposition}[stat]{Proposition}
\newtheorem{corollary}[stat]{Corollary}
\newtheorem{theorem}[stat]{Theorem}
\newtheorem{lemma}[stat]{Lemma}
\theoremstyle{definition} 
\newtheorem{remark}[stat]{Remark}
\newtheorem{example}[stat]{Example}
\numberwithin{equation}{section}
\begin{document}
\maketitle
\begin{abstract}
	Consider the stochastic heat equation 
	$\partial_t u = (\nicefrac\varkappa2)\Delta u+\sigma(u)\dot{F}$,
	where the solution $u:=u_t(x)$ is indexed by $(t\,,x)\in (0\,,\infty)\times\R^d$,
	 and $\dot{F}$ is a centered Gaussian noise that is white in time
	 and has spatially-correlated coordinates. We analyze the
	 large-$\|x\|$ fixed-$t$ behavior of the solution $u$ in different
	 regimes, thereby study the effect of noise on the solution in various
	 cases. Among other things, we show that if the spatial correlation
	 function $f$ of the noise is of Riesz type, that is $f(x)\propto \|x\|^{-\alpha}$,
	 then the ``fluctuation exponents''
	 of the solution are $\psi$ for the spatial variable and $2\psi-1$ for the time
	 variable, where $\psi:=2/(4-\alpha)$. Moreover, these exponent relations
	 hold as long as $\alpha\in(0\,,d\wedge 2)$; that is precisely when
	 Dalang's theory \cite{Dalang:99} implies the existence
	 of a solution to our stochastic PDE. These findings bolster  
	 earlier physical predictions \cite{KPZ,KZ}.\\

	\noindent{\it Keywords:} The stochastic heat equation, chaos,
	intermittency, the parabolic Anderson model, the KPZ equation, critical exponents.\\

	\noindent{\it \noindent AMS 2000 subject classification:}
	Primary 60H15; Secondary 35R60.
\end{abstract}

\section{Introduction}

Consider the  nonlinear stochastic heat equation,
$$
	\frac{\partial}{\partial t} u_t(x) = \frac{\varkappa}{2}(\Delta u_t)(x)
	+\sigma(u_t(x))\dot{F}_t(x),\eqno(\textnormal{SHE})
$$
where $\varkappa>0$ is a viscosity constant, $\sigma:\R\to\R$
is globally Lipschitz continuous, and  $\{\dot{F}_t(x)\}_{t>0,x\in\R^d}$ is 
a centered generalized Gaussian random field \cite[Chapter 2, \S2.7]{GV} 
with covariance measure
\begin{equation}
	\textnormal{Cov}\left( \dot{F}_t(x)\,,\dot{F}_s(y)\right)
	=\delta_0(t-s) f(x-y)
\end{equation}
of the convolution type. We also assume, mostly for the
sake of technical simplicity,  that
the initial function $u_0:\R^d\to\R$ is nonrandom, essentially bounded,
and measurable. In particular, we assume the following once 
and for all:
\begin{equation}
	\text{Throughout this paper,  we assume that 
	$\|u_0\|_{L^\infty(\R^d)}<\infty$,}
\end{equation}
and that the correlation function
$f$ is sufficiently nice that there exists a unique strong solution
to (\textnormal{SHE}); see the next section for the technical details.

Our first result (Theorem \ref{th:boundedness})
tells us that if the initial function $u_0$
decays at infinity faster than exponentially, then the solution 
$x\mapsto u_t(x)$ is typically globally bounded at all nonrandom times $t>0$.
The remainder of this paper is concerned with showing that if by contrast $u_0$
remains uniformly  away from zero, then the typical structure of
the random function $x\mapsto u_t(x)$ is quite different from the behavior
outlined in Theorem \ref{th:boundedness}. In particular,
our results show that the solution to \textnormal{(SHE)} depends in a
very sensitive way on the structure of the initial function $u_0$.
[This property explains the appearance of ``chaos'' in the title of the paper.]

Hereforth, we assume tacitly that $u_0$ is bounded uniformly away from 
zero and infinity. We now describe the remaining contributions of this
paper [valid for such choices of $u_0$].

Loosely speaking, $\dot{F}_t(x)$ is nothing 
but white noise in the time variable $t$, and has a homogenous spatial 
correlation function $f$ for its space variable $x$.  In a companion paper
\cite{CJK} we study \textnormal{(SHE)} in the case that $\dot{F}$ is replaced
with space-time
white noise; that is the case where we replace the covariance measure
with $\delta_0(t-s)\delta_0(x-y)$. In that case, the solution exists 
only when $d=1$ \cite{Dalang:99,PZ,Walsh}. Before we describe the
results of \cite{CJK}, let us introduce some notation.

Let $h,g:\R^d\to\R_+$ be two functions.
We write: (a) ``$h(x)\succ g(x)$'' when
$\limsup_{\|x\|\to\infty}[ h(x)/g(x)]$ is bounded below by a constant; 
(b) ``$h(x)\asymp g(x)$'' when 
$ h(x)\succ  g(x)$ and $ g(x) \succ  h(x)$ both hold; 
and finally (c)  ``$h(x)\stackrel{(\log)}{\approx} g(x)$''
means that $\log h(x) \asymp \log g(x)$.

Armed with this notation, we can describe some of the findings of
\cite{CJK} as follows:
\begin{enumerate}
\item[1.] 
	If $\sigma$ is bounded uniformly away from zero, then
	$u_t(x) \succ  \varkappa^{-\nicefrac{1}{12}} (\log\|x\|)^{%
	\nicefrac16}$
	a.s.\ for all times $t>0$, where the constant in ``$\succ $'' does
	not depend on $\varkappa$;
\item[2.] 
	If $\sigma$ is bounded uniformly away from zero and infinity,
	then $u_t(x)\asymp \varkappa^{-\nicefrac14}(\log\|x\|)^{%
	\nicefrac12}$ a.s.\ for all $t>0$, where the constant in ``$\asymp$''
	holds uniformly for all $\varkappa\geq\varkappa_0$ for every
	fixed $\varkappa_0>0$; and
\item[3.] 
	If $\sigma(z)=cz$ for some $c>0$---and \textnormal{(SHE)} is in that
	case called the ``parabolic
	Anderson model'' \cite{CM}---then
	\begin{equation}\label{3}
		u_t(x) \stackrel{(\log)}{\approx} \exp\left( \frac{(\log \|x\|)^\psi}{
		\varkappa^{2\psi-1}}\right),
	\end{equation}
	for $\psi=\nicefrac23$ and $2\psi-1=\nicefrac13$, 
	valid a.s.\ for all $t>0$.\footnote{%
	Even though the variable $x$ is one-dimensional here,
	we write ``$\|x\|$'' in place of ``$|x|$'' because
	we revisit \eqref{3}  in the next few
	paragraphs and consider the case that $x\in\R^d$
	for  $d\ge 1$. }
\end{enumerate}
	
Coupled with the results of \cite{FK:AIHP}, the preceding
facts show that the solution to the stochastic heat equation \textnormal{(SHE)}, driven by 
space-time white noise, depends sensitively on the choice of the initial
data. 

Let us emphasize that these findings [and the subsequent ones of the
present paper] are remarks about the effect of the noise on the solution
to the PDE \textnormal{(SHE)}. Indeed, it is easy to see that if $u_0(x)$ is identically
equal to one---this is permissible in the present setup---then the distribution
of $u_t(x)$ is independent of $x$. Therefore, the limiting behaviors described 
above cannot be detected by looking at the distribution of $u_t(x)$ alone
for a fixed $x$. Rather it is the correlation between $u_t(x)$ and $u_t(y)$
that plays an important role.
	
The goal of the present paper is to study the effect of disorder
on the ``intermittent'' behavior of the solution to \textnormal{(SHE)};
specifically, we consider spatially-homogeneous
correlation functions of the form
$f(x-y)$ that are fairly nice, and think of the viscosity coefficient
$\varkappa$ as small, but positive.  Dalang's theory
\cite{Dalang:99} can be used to show that the stochastic PDE
\textnormal{(SHE)} has a  solution in  all dimensions if $f(0)<\infty$;
and it turns out that typically the following are valid,
as $\|x\|\to\infty$:
\begin{enumerate}
\item[$1'$.] 
	If $\sigma$ is bounded uniformly away from zero, then
	$u_t(x) \succ  (\log\|x\|)^{\nicefrac14}$
	for all times $t>0$, uniformly for all $\varkappa>0$ small;
\item[$2'$.] 
	If $\sigma$ is bounded uniformly away from zero and infinity,
	then $u_t(x)\asymp (\log\|x\|)^{\nicefrac12}$ for 
	all $t>0$, uniformly for all $\varkappa>0$ small; and
\item[$3'$.] If $\sigma(z)=cz$ for some $c>0$
	[the parabolic Anderson model] then \eqref{3} holds
	with  $\psi=\nicefrac12$ and $2\psi-1=0$,
	for all $t>0$ and uniformly for all $\varkappa>0$ small.
\end{enumerate}
Thus, we find that for nice bounded correlation functions,
the level of disorder [as measured by  $\nicefrac1\varkappa$]
does not play a role in determining the asymptotic large-$\|x\|$ 
behavior of the solution,  whereas it does for $f(x-y)=\delta_0(x-y)$. 
In other words, $1'$, $2'$, and $3'$ are in sharp
contrast to $1$, $2$, and $3$  respectively.
This contrast
can be explained loosely as saying that when $f$ is nice, the model
is ``mean field''; see in particular the application of the typically-crude
inequality \eqref{eq:Jensen}, which is shown to be sharp in this context.

One can think of the viscosity coefficient $\varkappa$ as ``inverse time'' by 
making analogies with finite-dimensional diffusions. 
As such, \eqref{3} suggests a kind
of space-time scaling that is valid universally for many choices of initial
data $u_0$; interestingly enough this very scaling law [$\psi$ versus
$2\psi-1$] has been predicted in the physics literature \cite{KZ,KPZ}, 
and several parts of it have been proved rigorously
in recent works by Bal\'azs, Quastel, and Sepp\"al\"ainen \cite{BQS}
and Amir, Corwin, and Quastel \cite{ACQ} in a large-$t$ fixed-$x$ regime.

We mentioned that \eqref{3} holds for $\psi=\nicefrac23$ [space-time white noise]
and $\psi=\nicefrac12$ [$f$ nice and bounded]. In the last portion of
this paper we prove that there are models---for the correlation function
$f$ of the noise $\dot{F}$---that satisfy \eqref{3} for every 
$\psi\in(\nicefrac12\,,\nicefrac23)$ in dimension $d=1$ and
for every $\psi\in(\nicefrac12\,,1)$ in dimension $d\ge 2$.
It is possible that these results reinforce  the ``superuniversality'' predictions of Kardar
and Zhang \cite{KZ}.

We conclude the introduction by setting forth some notation that
will be used throughout, and consistently. 

Let $p_t(z)$ denote
the heat kernel for $(\nicefrac\varkappa2)\Delta$ on $\R^d$; that is,
\begin{equation}
	p_t(z) := 
	\frac{1}{\left(2\pi \varkappa t\right)^{d/2}}\exp\left(-\frac{
	\|z\|^2}{2\varkappa t}\right)
	\qquad(t>0,\ z\in\R^d).
\end{equation}

We will use the Banach norms on random fields as defined
in \cite{FK}. Specifically, we define, for all $k\ge 1$, $\delta>0$, 
and random fields $Z$,
\begin{equation}\label{eq:M}
	\mathcal{M}_\delta^{(k)}(Z) := \sup_{\substack{t\ge 0\\
	x\in\R^d}} \left[ \e^{-\delta t}\|Z_t(x)\|_k\right],
\end{equation}
where we write
\begin{equation}
	\|Z\|_k := \left( \E\left( |Z|^k\right) \right)^{1/k}
	\ \text{whenever $Z\in L^k(\P)$ for some $k\in[1\,,\infty)$}.
\end{equation}

Throughout, $\mathcal{S}$ denotes the collection of all
rapidly-decreasing Schwarz test functions from $\R^d$ to $\R$,
and our Fourier transform is normalized so that
\begin{equation}
	\hat{g} (\xi) = \int_{\R^d} \e^{ix\cdot \xi} g(x)\,\d x
	\qquad\text{for all $g\in L^1(\R^d)$}.
\end{equation}

On several occasions, we apply the Burkholder--Davis--Gundy
inequality 
\cite{Burkholder,BDG,BG}
for continuous $L^2(\P)$ martingales:
If $\{X_t\}_{t\ge 0}$ is a continuous $L^2(\P)$ martingale with
running maximum $X^*_t:=\sup_{s\in[0,t]}|X_s|$
and quadratic variation process $\langle X\rangle$, then for all 
real numbers $k\ge 2$ and $t>0$,
$$
	\left\| X_t^*\right\|_k \le
	\left\| 4k\langle X\rangle_t \right\|_{k/2}^{\nicefrac12}.
	\eqno(\textnormal{BDG})
$$
The factor $4k$ is the asymptotically-optimal bound of
Carlen and Kree \cite{CK} for the sharp constant in the 
Burkholder--Davis--Gundy
inequality that is due to Davis \cite{Davis}. We will also sometimes use the notation
\begin{equation}\label{ubar}
\underline{u}_0:=\inf_{x\in \R^d}u_0(x),\qquad \overline{u}_0:=\sup_{x\in \R^d} u_0(x).
\end{equation}

\section{Main results}
Throughout, we assume tacitly that $\hat f$
is a measurable function [which then is necessarily nonnegative] and
\begin{equation}\label{cond:Dalang}
	\int_{\R^d}  \frac{\hat{f}(\xi)}{1+\|\xi\|^2}\,\d\xi<\infty.
\end{equation}
Condition \eqref{cond:Dalang} ensures the existence of an a.s.-unique 
predictable random field 
$u=\{u_t(x)\}_{t>0,x\in\R^d}$ that solves \textnormal{(SHE)} in the mild 
form \cite{Dalang:99}.\footnote{Dalang's theory assumes that $f$
is continuous away from the origin; this continuity condition
can be removed \cite{FK:TRAMS,PZ}.}
That is, $u$ solves the following random integral equation
for all $t>0$ and $x\in\R^d$:
\begin{equation}\label{mild}
	u_t(x) = (p_t*u_0)(x) + \int_{(0,t)\times\R^d}
	p_{t-s}(y-x)\sigma(u_s(y))\, F(\d s\,\d y)
	\qquad\text{a.s.}
\end{equation}

We note that, because $f$ is positive definite, Condition \eqref{cond:Dalang}
is verified automatically [for all $d\ge 1$]
when $f$ is a bounded function. In fact, it has been shown in
Foondun and Khoshnevisan \cite{FK:TRAMS} that Dalang's 
condition \eqref{cond:Dalang}
is equivalent to the condition that the correlation function $f$ has a 
\emph{bounded potential} in the sense of classical potential theory. 
Let us recall what this means next: Define $R_\beta$ to be the
{\rm $\beta$-potential} corresponding to the convolution
semigroup defined by $\{p_t\}_{t>0}$;  that is, $R_\beta$ is the linear
operator that is defined via setting
\begin{equation}\label{eq:R_beta}
	(R_\beta \phi)(x) := \int_0^\infty\e^{-\beta t}
	(p_t*\phi)(x)\,\d t\qquad(t>0,\,x\in\R^d),
\end{equation}
for all measurable $\phi:\R^d\to\R_+$. Then, Dalang's condition
\eqref{cond:Dalang} is equivalent to the condition that
$R_\beta f$ is a bounded function for one, hence all, $\beta>0$;
and another equivalent statement [the maximum principle] is that
\begin{equation}
	(R_\beta f)(0)<\infty\quad\text{for one, hence all, $\beta>0$}.
\end{equation}
See \cite[Theorem 1.2]{FK:TRAMS} for details.

Our first main result states that if $u_0$ decays at infinity faster than
exponentially, then a mild condition on $f$ ensures that the
solution to \textnormal{(SHE)} is bounded at all times.

\begin{theorem}\label{th:boundedness}
	Suppose $\limsup_{\|x\|\to\infty}
	\|x\|^{-1}\log|u_0(x)|=-\infty$ and 
	$\int_0^1 s^{-a} (p_s*f)(0)\,\d s<\infty$ for some $a\in (0,\nicefrac{1}{2})$. Also assume $\sigma(0)=0$.  Then
	$\sup_{x\in\R^d}|u_t(x)|<\infty$ a.s.\ for all $t>0$.
	In fact, 
	$\sup_{x\in\R^d}|u_t(x)|\in L^k(\P)$ for all $t>0$ and $k\in[2\,,\infty)$.
\end{theorem}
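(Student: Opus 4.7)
The plan rests on the mild form~\eqref{mild}, the decomposition $u_t(x)=(p_t\ast u_0)(x)+N_t(x)$ with $N_t(x)$ the stochastic integral, and the bound $|\sigma(v)|\le\lip|v|$ that follows from $\sigma(0)=0$. The super-exponential decay $|u_0(y)|\le C_\lambda\e^{-\lambda\|y\|}$, valid for every $\lambda>0$, combined with $\|y\|\ge\|x\|-\|y-x\|$ and the Gaussian tails of $p_t$, immediately yields $|(p_t\ast u_0)(x)|\le C'_{\lambda,t}\,\e^{-\lambda\|x\|}$ for every $\lambda>0$. From here the strategy is: establish the same super-exponential decay for $\|N_t(x)\|_k$; upgrade it to a Hölder modulus of continuity that also carries this decay; and pass to the supremum via Kolmogorov's continuity theorem on a lattice tiling of $\R^d$.

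\emph{Pointwise $L^k$ decay.} Introduce the weighted seminorm $A_{\lambda,k}(t):=\sup_{s\le t,\,z\in\R^d}\e^{\lambda\|z\|}\|u_s(z)\|_k$. The BDG inequality applied to $N_t(x)$, together with Minkowski and the Cauchy-Schwarz bound $\|\sigma(u_s(y))\sigma(u_s(y'))\|_{k/2}\le\lip^2\|u_s(y)\|_k\|u_s(y')\|_k$, gives
\[
\|N_t(x)\|_k^2\le 4k\lip^2\int_0^t\!\!\int\!\!\int p_{t-s}(y-x)p_{t-s}(y'-x)f(y-y')\|u_s(y)\|_k\|u_s(y')\|_k\,\d y\,\d y'\,\d s.
\]
The elementary Gaussian estimate $\e^{\lambda(\|x\|-\|y\|)}p_{t-s}(y-x)\le 2^{d/2}\e^{\lambda^2\varkappa(t-s)}p_{2(t-s)}(y-x)$, obtained by completing the square in the exponent, absorbs the weight into an enlarged heat kernel and produces
\[
A_{\lambda,k}(t)^2\le 2(C'_{\lambda,t})^2+C\int_0^t A_{\lambda,k}(s)^2\,\e^{2\lambda^2\varkappa(t-s)}(p_{4(t-s)}\ast f)(0)\,\d s.
\]
The hypothesis implies $(p_s\ast f)(0)\in L^1_{\mathrm{loc}}(\R_+)$, so Gronwall's inequality---applied after a Picard iteration that makes $A_{\lambda,k}$ a priori finite---yields $A_{\lambda,k}(t)<\infty$, i.e., $\|u_t(x)\|_k\le C_{\lambda,t,k}\,\e^{-\lambda\|x\|}$ for every $\lambda>0$, $t>0$, and $k\ge 2$.

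\emph{Weighted Hölder continuity.} Applying BDG to $N_t(x)-N_t(x')$ and using the identity
\[
\int\!\!\int\bigl[p_s(y-x)-p_s(y-x')\bigr]\bigl[p_s(y'-x)-p_s(y'-x')\bigr]f(y-y')\,\d y\,\d y'=2\bigl[(p_{2s}\ast f)(0)-(p_{2s}\ast f)(x-x')\bigr],
\]
together with the Fourier bound $(p_{2s}\ast f)(0)-(p_{2s}\ast f)(h)\le C_\gamma\|h\|^{2\gamma}s^{-\gamma}(p_s\ast f)(0)$ (obtained from $1-\cos(h\cdot\xi)\le|h\cdot\xi|^{2\gamma}$ and $\sup_\xi\|\xi\|^{2\gamma}\e^{-\varkappa s\|\xi\|^2/2}\lesssim s^{-\gamma}$), and with the weighted $L^k$ decay from the previous step inserted through the same weighted-norm machinery, one obtains
\[
\|u_t(x)-u_t(x')\|_k\le C_{\lambda,t,k}\,\|x-x'\|^\gamma\bigl(\e^{-\lambda\|x\|}+\e^{-\lambda\|x'\|}\bigr)
\]
for every $\lambda>0$ and every $\gamma\in(0,a)$; the restriction $a\in(0,\nicefrac12)$ guarantees that $\int_0^t(t-s)^{-\gamma}(p_{t-s}\ast f)(0)\,\d s<\infty$. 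This is the principal technical step, since one must simultaneously extract Hölder regularity and super-exponential spatial decay from a single stochastic-integral estimate, which forces the weighted-norm bookkeeping to be carried through the increment calculation rather than applied only at the end.

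\emph{Global supremum via Kolmogorov.} Tile $\R^d$ by unit cubes $\{Q_j\}_{j\in\Z^d}$ centered at the integer lattice. For $k$ large enough that $\gamma k>d$, Kolmogorov's continuity theorem on each $Q_j$, combined with the previous two steps, yields $\bigl\|\sup_{x\in Q_j}|u_t(x)|\bigr\|_k\le C''_{\lambda,t,k}\,\e^{-\lambda\|j\|}$. The pointwise inequality $\bigl(\sup_{x}|u_t(x)|\bigr)^k\le\sum_j\sup_{x\in Q_j}|u_t(x)|^k$ then gives
\[
\bigl\|\sup_x|u_t(x)|\bigr\|_k^k\le\sum_{j\in\Z^d}(C''_{\lambda,t,k})^k\,\e^{-k\lambda\|j\|}<\infty.
\]
Since $k$ can be chosen arbitrarily large, the conclusion $\sup_x|u_t(x)|\in L^k(\P)$ for every $k\in[2,\infty)$ follows from the inclusion $L^{k'}(\P)\subset L^k(\P)$ on a probability space when $k'\ge k$.
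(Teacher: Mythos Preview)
Your overall architecture matches the paper's almost exactly: a weighted exponential norm in space, the Gaussian estimate $\e^{\lambda\|y-x\|}p_s(y-x)\le 2^{d/2}\e^{\lambda^2\varkappa s}p_{2s}(y-x)$, a Picard/Gronwall closure, a H\"older estimate carrying the decay, and a lattice tiling with Kolmogorov. The pointwise $L^k$-decay step is essentially the paper's Proposition~6.3.

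There is, however, a genuine gap in your H\"older step. The identity
\[
\iint D_s(y)\,D_s(y')\,f(y-y')\,\d y\,\d y' \;=\; 2\bigl[(p_{2s}*f)(0)-(p_{2s}*f)(x-x')\bigr]
\]
holds only for the bare kernel difference $D_s$; it does not survive once the factors $\sigma(u_s(y))\sigma(u_s(y'))$ are present in the quadratic variation. After BDG and Minkowski you are forced to work with $|D_s(y)|\,|D_s(y')|$ and $\|u_s(y)\|_k\|u_s(y')\|_k$, and the Fourier identity no longer applies to the absolute values. The assertion that one can ``insert the weighted $L^k$ decay through the same weighted-norm machinery'' while still using this identity is not justified: the weights $\e^{-\lambda\|y\|}$ destroy the clean Plancherel computation, and the pointwise bound $|\widehat{D_s\cdot g}(\xi)|\le\|g\|_\infty|\hat D_s(\xi)|$ is simply false.

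The paper handles this differently, in two moves you could adopt. First, it replaces your Fourier identity by the real-variable bound $\int_{\R^d}|p_s(y-x)-p_s(y-x')|\,\d y\le \textnormal{const}\cdot(\|x-x'\|/\sqrt{\varkappa s}\wedge 1)$ (Lemma~6.4), which is compatible with absolute values; together with the trivial bound $\int|D_s(y)|f(y-z)\,\d y\le 2(p_s*f)(0)$ this gives an \emph{unweighted} H\"older estimate $\|u_t(x)-u_t(x')\|_k\le C\|x-x'\|^{a/2}$. Second, rather than carrying the weight through the increment calculation, the paper splits afterward via
\[
\E\bigl(|u_t(x)-u_t(x')|^k\bigr)\le \|u_t(x)-u_t(x')\|_k^{k/2}\cdot\bigl(\|u_t(x)\|_k\vee\|u_t(x')\|_k\bigr)^{k/2},
\]
so the H\"older rate comes from the first factor and the exponential decay from the second (via the pointwise step you already have). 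This sidesteps entirely the ``simultaneous extraction'' that you flagged as the principal difficulty.
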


Our condition on $f$ is indeed mild, as the following remark shows.

\begin{remark}
	Suppose that  there exist constants $A\in(0\,,\infty)$ and
	$\alpha\in(0\,,d\wedge 2)$  such that $\sup_{\|x\|>z}f(x) \le A z^{-\alpha}$
	for all $z>0$. 
	[Just about every correlation function that one would like to
	consider has this property.]
	Then we can deduce from the form of the heat kernel that for all $r,s>0$,
	\begin{align}\notag
		(p_s*f)(0)  &\le (2\pi\varkappa s)^{-d/2}
			\cdot\int_{\|x\|\le r}f(x)\,\d x
			+ \sup_{\|x\|>r}f(x)\\
		&\le (2\pi\varkappa s)^{-d/2}
			\cdot\sum_{k=0}^\infty \int_{2^{-k-1}r<\|x\|\le 2^{-k}r}
			f(x)\,\d x + \frac{A}{r^\alpha}\\\notag
		&\le \frac{\textnormal{const}}{s^{d/2}}\cdot \sum_{k=0}^\infty
			\left( 2^{-k-1}r\right)^{d-\alpha} + \frac{A}{r^\alpha}
			\le \textnormal{const}\cdot\left[
			\frac{r^{d-\alpha}}{s^{d/2}}+r^{-\alpha}\right].
	\end{align}
	We optimize over $r>0$ to find that
	$(p_s*f)(0) \le \textnormal{const}\cdot s^{-\alpha/2}.$
	In particular, $(R_\beta f)(0)<\infty$ for all $\beta>0$,
	and $\int_0^1 s^{-a}(p_s*f)(0)\,\d s<\infty$ for some $a\in (0,\nicefrac{1}{2})$.
	\qed
\end{remark}

Recall that the initial function $u_0$ is assumed to be bounded throughout.
For the remainder of our analysis we study only 
bounded initial functions that also satisfy $\inf_{x\in\R^d} u_0(x)>0$.
And we study only correlation functions $f$ that have the form $f= h*\tilde{h}$ for
some nonnegative function $h\in  W^{1,2}_{\textit{loc}}(\R^d)$,
where  $\tilde{h}(x) := h(-x)$
denotes the reflection of $h$, and
$W^{1,2}_{\textit{loc}}(\R^d)$ denotes the vector space
of all locally integrable functions
$g:\R^d\to\R$ whose Fourier transform is a function
that satisfies
\begin{equation}
	\int_{\|x\|<r} \|x\|^2\, |\hat{g}(x)|^2\,\d x<\infty
	\qquad\text{for all $r>0$}.
\end{equation}

Because $L^2(\R^d)\subset W^{1,2}_{\textit{loc}}(\R^d)$,
Young's inequality tells us that $f:= h*\tilde{h}$ 
is positive definite and continuous, provided that 
$h\in L^2(\R^d)$; in that case, 
we have also that $\sup_{x\in\R^d}|f(x)|=f(0)<\infty$. 
And the condition that $h\in L^2(\R^d)$
cannot be relaxed, as there exist many choices of
nonnegative $h\in W^{1,2}_{\textit{loc}}(\R^d)\setminus L^2(\R^d)$
for which  $f(0)=\infty$; see Example \ref{ex:Riesz} below. We
remark also that \eqref{cond:Dalang} holds automatically when $h\in L^2(\R^d)$.

First, let us consider the case that $h\in L^2(\R^d)$ is nonnegative
[so that $f$ is nonnegative,
bounded and continuous, and \eqref{cond:Dalang} is valid automatically]. 
According to the theory of Walsh \cite{Walsh}, \textnormal{(SHE)}
has a mild solution $u=\{u_t(x)\}_{t>0,x\in\R^d}$---for 
all $d\ge 1$---that has continuous trajectories and
is unique up to evanescence among all predictable random
fields that satisfy $\sup_{t\in(0,T)}\sup_{x\in\R^d}\E(|u_t(x)|^2)<\infty$
for all $T>0$. In particular, $u$ solves \eqref{mild}
almost surely for all $t>0$ and $x\in\R^d$, where the stochastic
integral is the one defined by Walsh \cite{Walsh} and Dalang \cite{Dalang:99}.

Our next result describes the behavior of that solution, for nice
choices of $h\in L^2(\R^d)$, when viewed
very far away from the origin.

\begin{theorem}\label{th:sigma:bdd}
	Consider  \textnormal{(SHE)} where $\inf_{x\in\R^d}u_0(x)>0$,
	and suppose $f=h*\tilde{h}$ for a nonnegative
	 $h\in L^2(\R^d)$ that satisfies
	the following for some $a>0$:
	$\int_{\|z\|>n} [h(z)]^2\,\d z=O(n^{-a})$ as $n\to\infty$.
	If $\sigma$ is bounded uniformly away from zero, then
	\begin{equation}\label{eq:sigma:bdd1}
		\limsup_{\|x\|\to\infty}\frac{|u_t(x)|}{(\log\|x\|)^{\nicefrac14}}>0
		\qquad\text{a.s.}\quad\text{for all $t>0$}.
	\end{equation}
	If $\sigma$ is bounded uniformly away
	from zero and infinity, then
	\begin{equation}\label{eq:sigma:bdd2}
		0<\limsup_{\|x\|\to\infty} \frac{|u_t(x)|}{(\log \|x\|)^{%
		\nicefrac12}}<\infty
		\qquad\text{a.s.}\quad\text{for all $t>0$.}
	\end{equation}
\end{theorem}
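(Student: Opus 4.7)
The plan is to split Theorem~\ref{th:sigma:bdd} into an upper and a lower half. The upper bound in \eqref{eq:sigma:bdd2} should come from uniform sub-Gaussian moment bounds for $u_t(x)$ plus a chaining / Borel--Cantelli argument. The lower bounds in both \eqref{eq:sigma:bdd1} and \eqref{eq:sigma:bdd2} require a noise-localization step that makes $u_t$ at well-separated points approximately independent, combined with a maximum-over-near-independent-trials computation.

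For the upper bound, starting from \eqref{mild} and applying (BDG), the quadratic variation contributes the integrand $\sigma(u_s(y))\sigma(u_s(z))f(y-z)$, which is bounded in $(s,y,z)$ because $\sigma$ is bounded and $f=h*\tilde h$ with $h\in L^2(\R^d)$ forces $f$ to be bounded. A Picard--Gronwall iteration inside the norm $\mathcal{M}^{(k)}_\delta$ of \eqref{eq:M} then yields $\|u_t(x)\|_k\le C(t)\sqrt{k}$ uniformly in $x$, and by Chebyshev one gets $\P\{|u_t(x)|>\lambda\}\le\exp(-c\lambda^2)$. Running the same estimate on the spatial increments $u_t(x)-u_t(y)$ gives a Kolmogorov-type modulus of continuity, so a standard chaining / Borel--Cantelli argument over scales $R=2^n$ promotes the pointwise Gaussian tail to a uniform bound of order $(\log R)^{\nicefrac12}$ on the ball $\{\|x\|\le R\}$, giving the upper half of \eqref{eq:sigma:bdd2}.

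For the lower bounds, the key idea is noise localization. For $r>0$ and $x\in\R^d$, let $u^{(r,x)}$ solve \eqref{mild} with the Walsh integral restricted to noise supported in $B(x,r)$. The tail hypothesis $\int_{\|z\|>n}h(z)^2\,\d z=O(n^{-a})$ controls the $L^2$-isometry of the error and yields $\|u_t(x)-u^{(r,x)}_t(x)\|_2\le\varepsilon(r)$ with polynomial decay in $r$. Choose $N\asymp\|x\|^d/r^d$ pairwise $2r$-separated points $x_1,\dots,x_N$ in the shell $\|y\|\asymp\|x\|$. The variables $u^{(r,x_i)}_t(x_i)$ depend on essentially disjoint pieces of the noise and are therefore approximately independent, up to the $O(n^{-a})$ leakage (which I would handle by a decoupling / truncation step). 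The lower bound $\sigma\ge c>0$ gives $\|u_t(x)\|_2^2\gtrsim 1$ via the Walsh isometry applied to the stochastic-integral term, and together with control of higher moments this feeds a Paley--Zygmund argument yielding a probability lower bound for $|u^{(r,x_i)}_t(x_i)|$ at the chosen scale; combined with approximate independence across $i$, this delivers the logarithmic growth of the maximum.

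The central obstacle is matching the upper exponent $\nicefrac12$ in \eqref{eq:sigma:bdd2}. A Paley--Zygmund argument applied to the random variable $|u_t(x)|^{2k}$ gives
\[
\P\bigl\{|u_t(x)|^{2k}\ge\tfrac12\E[|u_t(x)|^{2k}]\bigr\}\ \ge\ \tfrac14\,\frac{\bigl(\E[|u_t(x)|^{2k}]\bigr)^2}{\E[|u_t(x)|^{4k}]}.
\]
When $\sigma$ is bounded on \emph{both} sides one expects Gaussian-type moments $\|u_t(x)\|_k\asymp\sqrt{k}$ in both directions, so the right-hand side is of order $\exp(-ck)$; optimizing in $k$ produces an individual tail of Gaussian type $\exp(-c\lambda^2)$ and hence a maximum of order $(\log N)^{\nicefrac12}$ over the $N$ nearly-independent trials. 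When only $\sigma\ge c>0$ is assumed, the matching upper moment bound is unavailable, so only the constant-probability / small-$k$ version of Paley--Zygmund is usable, which yields the reduced exponent $\nicefrac14$ of \eqref{eq:sigma:bdd1}. The subsidiary technical points are (i) establishing the polynomial-rate localization estimate $\|u_t(x)-u^{(r,x)}_t(x)\|_2=O(r^{-a/2})$ directly from the hypothesis on $h$, and (ii) the decoupling step that converts the $O(n^{-a})$-correlated localized variables into genuinely independent ones without losing the constants that enter the moment bounds.
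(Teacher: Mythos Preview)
Your overall architecture matches the paper's: the upper bound in \eqref{eq:sigma:bdd2} is obtained exactly as you describe (BDG with bounded $\sigma$ and $f$ gives $\|u_t(x)\|_k\le C\sqrt k$, then a modulus-of-continuity estimate plus Borel--Cantelli over dyadic scales), and the lower bounds come from a localization step plus a Paley--Zygmund argument over many well-separated sites. However, two substantive ingredients are either missing or mis-diagnosed.

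\textbf{The moment lower bound.} The engine for both lower bounds is the inequality $\|u_t(x)\|_k\ge c\sqrt k$ for all $k\ge2$, valid whenever $\sigma$ is bounded below. Your proposal only claims $\|u_t(x)\|_2\gtrsim 1$ from the Walsh isometry; this is not enough to produce a threshold that grows with the number of trials. The paper obtains $\|u_t(x)\|_k\ge c\sqrt k$ by fixing $\tau>0$, writing $M_t=(p_\tau*u_0)(x)+\int_{(0,t)\times\R^d}p_{\tau-s}(y-x)\sigma(u_s(y))\,F^{(h)}(\d s\,\d y)$ as an $L^2$ martingale on $[0,\tau]$, applying It\^o's formula to $M_t^k$, and using $\sigma^2\ge\epsilon_0^2$ on the quadratic-variation term to get a recursive lower bound $\E(M_t^k)\ge\E([\inf u_0+\zeta_t]^k)$ where $\zeta_t$ is an explicit centered Gaussian. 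This is the step that makes the whole scheme work, and it is absent from your outline.

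\textbf{The source of the exponent $\nicefrac14$.} Your explanation --- that without an upper bound on $\sigma$ one is forced to a ``constant-probability / small-$k$'' Paley--Zygmund --- is not how the exponent $\nicefrac14$ arises, and indeed a constant-probability argument cannot produce any growing threshold. What actually happens is that Paley--Zygmund is still run at level $2k$ against $4k$, using on the numerator side the lower bound $\|u_t(x)\|_{2k}\ge c\sqrt{2k}$ just mentioned. The difference is on the denominator side: when $\sigma$ is only Lipschitz (not bounded), the available upper bound is $\|u_t(x)\|_k\le C\sqrt k\,\e^{Dk}$ (this is \eqref{mom:upbd}), not $C\sqrt k$. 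Plugging this in gives
\[
\P\left\{|u_t(x)|\ge \tfrac12\|u_t(x)\|_{2k}\right\}
\ge \frac{\bigl(\E|u_t(x)|^{2k}\bigr)^2}{4\,\E|u_t(x)|^{4k}}
\ge \textnormal{const}\cdot \e^{-Ck^2},
\]
so that $\P\{|u_t(x)|\ge c\sqrt k\}\ge \e^{-C'k^2}$, i.e.\ $\P\{|u_t(x)|\ge\lambda\}\ge \e^{-a\lambda^4}$. With $N$ near-independent trials this gives a maximum of order $(\log N)^{1/4}$. When $\sigma$ is also bounded above, the denominator bound improves to $\|u_t(x)\|_k\le C\sqrt k$, the right-hand side becomes $\e^{-Ck}$, and the tail is $\e^{-c\lambda^2}$, yielding $(\log N)^{1/2}$.

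\textbf{Localization.} Your description ``restrict the Walsh integral to noise supported in $B(x,r)$'' is not by itself sufficient: $F^{(h)}$ on disjoint spatial regions is still correlated through $f=h*\tilde h$. The paper first replaces $h$ by the compactly supported $h_\beta:=h\hat\varrho_\beta$ (this is where the hypothesis $\int_{\|z\|>n}h^2=O(n^{-a})$ enters, via Proposition~\ref{pr:mk}), then restricts the spatial integration to $[x-\beta\sqrt t,x+\beta\sqrt t]$, and finally passes to a finite Picard iterate $U^{(\beta,n)}$ with $n=[\log\beta]+1$. Only after all three modifications are the values at sufficiently $D$-separated sites genuinely independent (Lemma~\ref{lem:un-ind}). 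You flag a ``decoupling/truncation step'' but do not identify that truncating $h$ is the mechanism.
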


\begin{remark}
	Our derivation of Theorem \ref{th:sigma:bdd} will in fact yield 
	a little more information. Namely, that the limsups in
	\eqref{eq:sigma:bdd1} and \eqref{eq:sigma:bdd2} are both
	bounded below by a constant
	$c(\varkappa):=c(t\,,\varkappa\,,f\,,d)$ which satisfies 
	$\inf_{\varkappa\in(0,\varkappa_0)}c(\varkappa)>0$
	for all $\varkappa_0>0$; and the limsup in \eqref{eq:sigma:bdd2}
	is bounded above by a constant that does not depend on 
	the viscosity coefficient $\varkappa$.
	\qed
\end{remark}

If $g_1,g_2,\ldots$ is a sequence of independent standard normal
random variables, then it is well known that
$\limsup_{n\to\infty} (2\log n)^{-\nicefrac12} g_n=1$ a.s. Now choose
and fix some $t>0$. Because
$\{u_t(x)\}_{x\in\R^d}$ is a centered Gaussian process
when $\sigma$ is a constant, the preceding theorem
suggests that the asymptotic behavior of $x\mapsto u_t(x)$ is the
same as in the case that $\sigma$ is a constant; and that behavior is
``Gaussian.'' This ``Gaussian'' property continues to hold if we replace
$\dot{F}$ by space-time white noise---that is formally when $f=\delta_0$;
see \cite{CJK}. Next we exhibit ``non Gaussian'' behavior by considering
the following special case of \textnormal{(SHE)}:
$$
	\frac{\partial}{\partial t} u_t(x) = \frac{\varkappa}{2}(\Delta u_t)(x)
	+  u_t(x) \dot{F}_t(x).\eqno(\textnormal{PAM})
$$
This is the socalled ``parabolic Anderson model,''  and arises in
many different contexts in mathematics and theoretical physics
\cite[Introduction]{CM}.

\begin{theorem}\label{th:pa_f(0)Finite}
	Consider \textnormal{(PAM)}
	when $\inf_{x\in\R^d} u_0(x)>0$
	and $f=h*\tilde{h}$ for some nonnegative 
	function $h\in L^2(\R^d)$ that satisfies the following for some $a>0$:
	$\int_{\|z\|>n} [h(z)]^2\,\d z=O(n^{-a})$ as $n\to\infty$,
	Then for every $t>0$ there exist positive and finite constants
	$\underline{A}_t(\varkappa):= \underline{A}(t\,,\varkappa\,,d\,,f\,,a)$
	and $\overline{A}_t= \overline{A}(t\,,d\,,f(0)\,,a)$ such that with probability one
	\begin{equation}
		\underline{A}_t(\varkappa)\le
		\limsup_{\|x\|\to\infty} \frac{\log u_t(x)}{(\log \|x\|)^{\nicefrac12}}
		\le \overline{A}_t.
	\end{equation}
	Moreover: (i) There exists $\varkappa_0:=\varkappa_0(f\,,d)\in(0\,,\infty)$
	such that $\inf_{\varkappa\in(0,\varkappa_0)}
	\underline{A}_t(\varkappa)>0$
	for all $t>0$; and (ii) If $f(x)>0$ for all $x\in\R^d$, then
	$\inf_{\varkappa\in(0,\varkappa_1)}
	\underline{A}_t(\varkappa)>0$ for all $\varkappa_1>0$.
\end{theorem}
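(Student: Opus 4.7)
The result splits into a $\varkappa$-independent upper bound for $\limsup\log u_t(x)/(\log\|x\|)^{\nicefrac12}$, obtained from sharp moment estimates, and a matching lower bound produced by localising the noise and running a Paley--Zygmund argument across essentially independent copies. For the upper bound, the Hopf--Cole/Feynman--Kac formula for \textnormal{(PAM)} reads
\begin{equation*}
\E[u_t(x)^k]=\E\!\left[\prod_{i=1}^k u_0\!\bigl(x+B^{(i)}_t\bigr)\exp\!\left(\sum_{1\le i<j\le k}\int_0^t f\!\bigl(B^{(i)}_s-B^{(j)}_s\bigr)\,\d s\right)\right],
\end{equation*}
with $B^{(1)},\dots,B^{(k)}$ independent Brownian motions of variance parameter $\varkappa$. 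Bounding $f\le f(0)$ and $u_0\le\overline{u}_0$ gives the $\varkappa$-\emph{independent} estimate $\|u_t(x)\|_k\le\overline{u}_0\exp((k-1)tf(0)/2)$. Markov's inequality followed by optimisation in $k$ then yields the lognormal-type tail $\P(u_t(x)>\lambda)\le\exp\bigl(-(\log\lambda)^2/(2tf(0))+O(\log\lambda)\bigr)$. Taking $\lambda=\exp\!\bigl(A(\log\|x\|)^{\nicefrac12}\bigr)$ with $A>\sqrt{2dtf(0)}$ makes this summable along $\Z^d$, and a routine $L^k$ modulus-of-continuity estimate for $y\mapsto u_t(y)$ (obtained from the same moment bounds applied to spatial increments) extends the conclusion from the lattice to all of $\R^d$, delivering $\overline{A}_t\propto\sqrt{dtf(0)}$ as required.

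\textbf{Localization.} Using the factorisation $f=h*\tilde h$, realise $\dot F$ as the spatial convolution of $h$ with a space-time white noise $W$. For each $x\in\R^d$ and each $R>0$, let $u^{(R,x)}$ be the mild solution of \textnormal{(PAM)} driven by the truncated kernel $h(z-y)\1_{\|z-x\|\le R}$, constructed by Picard iteration in a norm of the type $\mathcal{M}^{(k)}_\delta$ in \eqref{eq:M}. Two facts are decisive. First, if $\|x_1-x_2\|>2R$ then the truncated noises use disjoint pieces of $W$, so $u^{(R,x_1)}_t(x_1)$ and $u^{(R,x_2)}_t(x_2)$ are independent. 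Second, by Walsh's isometry and a Gronwall estimate one has $\E\bigl[(u_t(x)-u^{(R,x)}_t(x))^2\bigr]\le C_t\int_{\|z\|>R}[h(z)]^2\,\d z=O(R^{-a})$ by the tail hypothesis on $h$. Consequently, taking $R$ to be a modest power of $\log\|x\|$ makes $u^{(R,x)}_t(x)$ interchangeable with $u_t(x)$ at the logarithmic scale of interest, with the discrepancy controlled by an additional Borel--Cantelli step.

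\textbf{Paley--Zygmund step.} Set $Z:=u^{(R,x)}_t(x)$. Feynman--Kac gives $\E Z=(p_t*u_0)(x)\ge\underline{u}_0>0$ and, by retaining the pairwise interaction factors in the $(2k)$-point analogue of the displayed formula, a lower bound $\|Z\|_{2k}\ge c\exp(\alpha(t,\varkappa,f)\,k)$, while $\|Z\|_{4k}\le\overline{u}_0\exp((4k-1)tf(0)/2)$ persists from the upper-bound step. Applying Paley--Zygmund to the nonnegative variable $Z^{2k}$ yields $\P(Z>\tfrac12 c\exp(\alpha k))\ge c'\exp(-Ck^2)$. Placing $n\asymp(N/R)^d$ well-separated centres inside $\{\|x\|\le N\}$ and using independence gives $\P\bigl(\max_i u_t(x_i)\le\exp(\alpha k)\bigr)\le(1-c'e^{-Ck^2})^n$, which vanishes once $k=c_0(\log N)^{\nicefrac12}$ for $c_0$ small. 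A Borel--Cantelli argument along $N=2^n$ then delivers the lower bound $\underline{A}_t(\varkappa)\ge c\,\alpha(t,\varkappa,f)$.

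\textbf{Uniformity in $\varkappa$ and main obstacle.} The entire lower bound therefore hinges on bounding $\alpha(t,\varkappa,f)$ from below in the two regimes asserted in (i) and (ii). For (i), as $\varkappa\downarrow 0$ the Brownian differences $B^{(i)}-B^{(j)}$ concentrate near the origin on time scale $t$, so the Feynman--Kac exponentials converge to a positive limit governed by $f(0)>0$; this yields $\liminf_{\varkappa\downarrow 0}\alpha(t,\varkappa,f)>0$ and therefore $\inf_{\varkappa\in(0,\varkappa_0)}\underline{A}_t(\varkappa)>0$ for a suitable $\varkappa_0$. For (ii), the assumption $f>0$ on all of $\R^d$ combined with a tightness argument for Brownian paths on $[0,t]$ confined to a large compact set, together with Jensen, produces $\E\bigl[\exp\bigl(\int_0^t f(B^{(i)}-B^{(j)})\,\d s\bigr)\bigr]\ge\delta(\varkappa_1)>0$ uniformly in $\varkappa\in(0,\varkappa_1]$. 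The main technical obstacle I anticipate is the bookkeeping of $\varkappa$-dependence in the localization step: the Gronwall constants and the required truncation radius $R$ both vary with $\varkappa$, and one must ensure that the vanishing of the truncation error beats the possible degradation of the Paley--Zygmund constants so that the independence structure can actually be exploited. Once that balance is struck, the remainder of the argument is routine.
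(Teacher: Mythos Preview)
Your overall architecture---Feynman--Kac moment bounds, a tail estimate via Chebyshev, localisation to manufacture independence, Paley--Zygmund on many centres, and Borel--Cantelli---is exactly the paper's. The upper bound and the $\varkappa$-uniformity heuristics are fine. But the localisation step, as you have written it, does not close.

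The gap is quantitative. You take the truncation radius $R$ to be a power of $\log\|x\|$ and control the error only in $L^2$: $\E\bigl[(u_t(x)-u^{(R,x)}_t(x))^2\bigr]=O(R^{-a})$. With $n\asymp(N/R)^d\asymp N^d$ centres and the target level $\lambda=\exp\bigl(c_0(\log N)^{1/2}\bigr)$, the union-bound/Chebyshev error term is of order $n\cdot R^{-a}/\lambda^2\asymp N^d(\log N)^{-ac}\exp\bigl(-2c_0(\log N)^{1/2}\bigr)$, which diverges, so Borel--Cantelli fails. Upgrading to an $L^k$ error bound does not help either while $R$ stays polylogarithmic: to make $\lambda^k$ beat $N^d$ you would need $k\gg(\log N)^{1/2}$, but the Paley--Zygmund step forces $k\lesssim(\log N)^{1/2}$. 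The paper resolves this by taking the truncation scale to be a genuine power of $N$ (their $\beta=R^{1-\theta}/\log R$) and by proving an $L^k$ error bound of the form $\E\bigl[|u_t(x)-U^{(\beta,n)}_t(x)|^k\bigr]\le G^k k^{k/2}e^{Fk^2}\beta^{-kb/2}$ (their Lemma~\ref{lem:u-Ubl}), so that $\beta^{-kb/2}$ can absorb $N^d$ for a fixed large $k$. Achieving this requires more than truncating the white-noise region: the paper additionally (i) replaces $h$ by the compactly supported $h_\beta$, (ii) restricts the spatial integral in the mild equation to $[x-\beta\sqrt t,\,x+\beta\sqrt t]$, and (iii) passes to a finite Picard iterate $U^{(\beta,[\log\beta]+1)}$, since otherwise the mild solution still depends on noise arbitrarily far away through the heat kernel. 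Your single-step truncation and ``Gronwall gives $O(R^{-a})$'' skips all three of these ingredients; without them the localisation error cannot be pushed below the polynomial-in-$N$ threshold that the independence argument demands.
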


The conclusion of Theorem \ref{th:pa_f(0)Finite}
is that, under the condition of that theorem, and if 
the viscosity coefficient $\varkappa$ is
sufficiently small, then for all $t>0$,
\begin{equation}\label{eq:pa_limsup:psi}
	\frac{\underline{B}}{\varkappa^{2\psi-1}}\le
	\limsup_{\|x\|\to\infty} \frac{\log u_t(x)}{(\log \|x\|)^\psi}
	\le \frac{\overline{B}}{\varkappa^{2\psi-1}}
	\qquad\text{a.s.},
\end{equation}
with nontrivial constants $\underline{B}$
and $\overline{B}$ that depend  on $(t\,,d\,,f)$---but \emph{not}
on $\varkappa$---and $\psi=\nicefrac12$. Loosely speaking, the preceding
and its proof together imply  that 
\begin{equation}
	\sup_{\|x\|<R} u_t(x) \mathop{\approx}\limits^{(\log)} \e^{
	\text{const}\cdot (\log R)^{\nicefrac12}},
\end{equation}
for all $\varkappa$ small and $R$ large. This informal assertion was
mentioned earlier in Introduction.

In \cite{CJK} we have proved that if $\dot{F}$ is replaced with
space-time white noise---that is, loosely speaking, when $f=\delta_0$---then
\eqref{eq:pa_limsup:psi} holds with $\psi=\nicefrac23$. That is,
\begin{equation}
	\sup_{\|x\|<R} u_t(x)\mathop{\approx}\limits^{(\log)}  
	\e^{ \text{const} \cdot (\log R)^{\nicefrac23} /\varkappa^{\nicefrac13}},
\end{equation}
for all $\varkappa>0$  and $R$ large.

In some sense these two examples signify the extremes among all choices of
possible correlations. One might wonder if there are other correlation
models that interpolate between the mentioned cases of
$\psi=\nicefrac12$ and $\psi=\nicefrac23$. Our next theorem shows that the answer is
``yes for every $\psi\in(\nicefrac12\,,\nicefrac23)$ when $d=1$
and every $\psi\in(\nicefrac12\,,1)$ when $d\ge 2$.'' However, our construction
requires us to consider certain correlation functions $f$ that have the form
$h*\tilde{h}$ for some $h\in W^{1,2}_{\textit{loc}}(\R^d)\setminus
L^2(\R^d)$. 

In fact, we choose and fix some number $\alpha\in(0\,,d)$, and consider
correlation functions of the Riesz type; namely,
\begin{equation}\label{eq:f=Riesz}
	f(x) :=  \textnormal{const}\cdot \|x\|^{-\alpha}
	\qquad\text{for all $x\in\R^d$}.
\end{equation}
It is not hard to check that $f$ is a correlation function that
has the form $h*\tilde{h}$ for some $h\in W^{1,2}_{\textit{loc}}(\R^d)$,
and $h\not\in L^2(\R^d)$;
see also Example \ref{ex:Riesz} below. 
Because the Fourier transform of $f$ is proportional to
$\|\xi\|^{-(d-\alpha)}$, \eqref{cond:Dalang} is equivalent
to the condition that $0<\alpha<\min(d\,,2)$,
and  Dalang's theory \cite{Dalang:99} tells us that if $u_0:\R^d\to\R$ is bounded and
measurable, then \textnormal{(SHE)} has a solution [that is also unique up
to evanescence], provided that $0<\alpha<\min(d\,,2)$.
Moreover, when $\sigma$ is a constant, \textnormal{(SHE)} has a solution
if and only if $0<\alpha<\min(d\,,2)$. 

Our next result describes the ``non Gaussian'' asymptotic behavior
of the solution to the parabolic Anderson model \textnormal{(PAM)} under
these conditions.

\begin{theorem}\label{th:pa_Riesz}
	Consider \textnormal{(PAM)} when $\inf_{x\in\R^d}u_0(x)>0$. 
	If $f(x)=\textnormal{const}\cdot\|x\|^{-\alpha}$
	for some $\alpha\in(0\,,d\wedge 2)$, then for every $t>0$
	there exist positive and finite constants
	$\underline{B}$
	and $\overline{B}$---both depending
	only on $(t\,,d\,,\alpha)$---such that \eqref{eq:pa_limsup:psi}
	holds with $\psi := 2/(4-\alpha)$; that is, for all $t>0$,
	\begin{equation}
		\frac{\underline{B}}{\varkappa^{\alpha/(4-\alpha)}}\le
		\limsup_{\|x\|\to\infty} \frac{\log u_t(x)}{(\log \|x\|)^{2/(4-\alpha)}}
		\le \frac{\overline{B}}{\varkappa^{\alpha/(4-\alpha)}}
		\qquad\text{a.s.}
	\end{equation}
\end{theorem}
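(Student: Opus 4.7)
The plan is to establish matching upper and lower bounds for $\limsup_{\|x\|\to\infty} \log u_t(x)/(\log\|x\|)^\psi$ at the critical exponent $\psi=2/(4-\alpha)$, built around sharp moment asymptotics for $u_t(x)$ together with a Borel--Cantelli argument for the upper bound and a Paley--Zygmund--plus--localization argument for the lower bound.

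First I would prove the moment bound
$$
	\|u_t(x)\|_k \;\le\; C\exp\!\left( C\, t\, k^{2/(2-\alpha)}\,\varkappa^{-\alpha/(2-\alpha)}\right)
	\qquad(k\ge 2,\ x\in\R^d).
$$
Starting from the Walsh--Dalang mild form \eqref{mild} with $\sigma(u)=u$, (BDG) and Minkowski's inequality yield, for $N_k(t):=\sup_{x\in\R^d}\|u_t(x)\|_k$,
$$
	N_k(t)^2 \;\le\; 2\overline{u}_0^{\,2} + 8k\int_0^t (p_{2(t-s)}*f)(0)\,N_k(s)^2\,\d s.
$$
A direct scaling computation with the Riesz form $f(x)\propto\|x\|^{-\alpha}$ gives $(p_{2(t-s)}*f)(0) = \text{const}\cdot(\varkappa(t-s))^{-\alpha/2}$, so a generalized Gronwall lemma with fractional kernel $(t-s)^{-\alpha/2}$, via Mittag--Leffler asymptotics, produces the stated bound. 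Chebyshev's inequality optimized over $k$ then gives, for $\lambda$ large,
$$
	\P\!\left(u_t(x)>\lambda\right) \;\le\; \exp\!\left( -c\,t^{-(2-\alpha)/2}\,\varkappa^{\alpha/2}\,(\log\lambda)^{(4-\alpha)/2}\right).
$$
Setting $\log\lambda=\overline B\,\varkappa^{-(2\psi-1)}(\log\|x\|)^\psi$ makes the $\varkappa$ powers cancel exactly---this is forced by the algebraic identities $\psi(4-\alpha)/2=1$ and $(2\psi-1)(4-\alpha)/2=\alpha/2$---so the tail decays polynomially in $\|x\|$ with exponent $c\,\overline B^{(4-\alpha)/2}\,t^{-(2-\alpha)/2}$. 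A lattice discretization of $\R^d$ on scales of order one, together with a uniform H\"older estimate for $u_t(\cdot)$ inside each unit cell, then promotes the Chebyshev bound to an almost-sure upper bound via the first Borel--Cantelli lemma.

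For the lower bound I would first establish the matching moment lower estimate
$$
	\|u_t(x)\|_k \;\ge\; c\exp\!\left( c\, t\, k^{2/(2-\alpha)}\,\varkappa^{-\alpha/(2-\alpha)}\right),
$$
either by retaining a single term in the Wiener chaos expansion of $u_t(x)$ at the optimal chaos level, or via the Feynman--Kac moment identity
$\E[u_t(x)^k]=\E\bigl[\prod_{i=1}^k u_0(x+B^i_t)\,\exp(\sum_{i<j}\int_0^t f(B^i_s-B^j_s)\,\d s)\bigr]$
together with sharp lower bounds for the associated Riesz pair-interaction exponential moments. Next I would carry out a \emph{noise localization}: define $u_t^{(x,r)}(z)$ to be the solution of the analogous SHE driven by the noise $\dot F$ restricted to the spatial ball $B(x,r)$, and control $\|u_t(x)-u_t^{(x,r)}(x)\|_k$ using the Gaussian decay of $p_{t-s}(\cdot)$, which shows the localization error is negligible relative to $\|u_t(x)\|_k$ once $r$ exceeds a fixed multiple of $\sqrt{\varkappa t\,\log\|x\|}$. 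Selecting a sequence $x_n$ with $\|x_n\|\to\infty$ and pairwise spacing $\|x_n-x_m\|\ge r_n+r_m$ makes the variables $u_t^{(x_n,r_n)}(x_n)$ genuinely independent. Paley--Zygmund applied at the optimally tuned level $k_n$ of order $(\log n)^{(2-\alpha)/(4-\alpha)}\varkappa^{\alpha/(4-\alpha)}$ then delivers
$$
	\P\!\left( u_t^{(x_n,r_n)}(x_n) \ge c\exp\!\left(\underline B\,\varkappa^{-(2\psi-1)}(\log n)^\psi\right)\right) \;\ge\; p_n,
$$
with $\sum_n p_n=\infty$; the second Borel--Cantelli lemma applied to these independent events concludes the lower bound.

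The main technical obstacle is the localization step. The Riesz correlation $f(x)\propto\|x\|^{-\alpha}$ decays only polynomially and in fact has $f(0)=\infty$, so there is no intrinsic spatial decay of the noise one can exploit; the smallness of the noise contribution from outside $B(x,r)$ must come entirely from the Gaussian decay of the heat kernel. Making the localization error quantitatively smaller than the moment lower bound, while keeping $r_n$ much smaller than the spacing between successive $x_n$ and yet keeping $\{x_n\}$ dense enough for Borel--Cantelli to detect fluctuations on the scale $(\log\|x\|)^\psi$, is precisely what forces $\psi=2/(4-\alpha)=\gamma/(\gamma+1)$ with $\gamma=2/(2-\alpha)$: it is the unique exponent that simultaneously balances the moment growth in $k$, the Paley--Zygmund success probability, and the localization error.
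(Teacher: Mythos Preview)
Your overall architecture---sharp two-sided moment bounds, Chebyshev plus lattice chaining for the upper bound, Paley--Zygmund plus localization and second Borel--Cantelli for the lower bound---is exactly the paper's strategy, and your moment estimates are correct. The gap is in the localization step, and you have identified the right place to worry but not the right fix.

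Restricting $\dot F$ to disjoint spatial balls $B(x_n,r_n)$ does \emph{not} produce independent random variables. The noise $\dot F$ has covariance $\delta_0(t-s)f(x-y)$ with $f(x)\propto\|x\|^{-\alpha}$, so $F_t(A)$ and $F_t(B)$ are correlated for \emph{any} disjoint $A,B$: $\mathrm{Cov}(F_t(\mathbf{1}_A),F_t(\mathbf{1}_B))=t\int_A\int_B f(x-y)\,\d x\,\d y>0$. Hence your localized solutions $u_t^{(x_n,r_n)}(x_n)$ are not independent, and the second Borel--Cantelli argument collapses. Heat-kernel decay controls the \emph{approximation error} between $u$ and its spatially-restricted version, but it cannot manufacture independence when the noise itself has infinite-range correlations.

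The paper's resolution is to localize at the level of the underlying white noise rather than at the level of $\dot F$. One writes $f=h*\tilde h$ with $h(x)\propto\|x\|^{-(d+\alpha)/2}$, so that $\dot F=h*\dot W$ for space-time white noise $\dot W$, and then replaces $h$ by a compactly supported truncation $h_\beta(x)=h(x)\hat\varrho_\beta(x)$. The resulting noise $F^{(h_\beta)}$ has correlation $h_\beta*\tilde h_\beta$ of compact support, so solutions built from $F^{(h_\beta)}$ over well-separated regions are genuinely independent. The price is a new error term $\|p*ZF^{(h)}-p*ZF^{(h_\beta)}\|_k$, which the paper controls (Proposition~\ref{pr:Riesz:h-h_n} via Lemma~\ref{lem:new}) by showing $(p_s*f_n)(0)\le A n^{-\eta}s^{-(\alpha-\eta)/2}$ for the difference kernel $f_n=(h-h_n)*(\widetilde{h-h_n})$. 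This truncation-of-$h$ idea is the missing ingredient; once you have it, your Paley--Zygmund and Borel--Cantelli steps go through as written.
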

\begin{remark}We mention here that the constants in the above theorems might depend on $u_0$ but \textit{only} through $\inf_{x\in \R^d} u_0(x)$ and $\sup_{x\in \R^d}u_0(x)$. We will not keep track of this dependence. Our primary interest is the dependence on $\varkappa$. \qed
\end{remark}

An important step in our arguments is to show that
if $x_1,\ldots,x_N$ are sufficiently spread out then typically 
$u_t(x_1),\ldots,u_t(x_N)$ are sufficiently close to being independent.
This amounts to a sharp estimate for the socalled ``correlation length.''
We estimate that, roughly using the  arguments of \cite{CJK}, devised 
for the space-time white noise. Those arguments are in turn using
several couplings \cite{Durrett,Liggett}, which might be of some interest.
We add that the presence of spatial correlations adds a number of subtle
[but quite serious] technical problems to this program. 

\section{A coupling of the noise}

\subsection{A construction of the noise}
Let $W:=\{W_t(x)\}_{t\ge 0,x\in\R^d}$ denote $(d+1)$-parameter
Brownian sheet. That is, $W$ is a centered Gaussian random field with
the following covariance structure: For all $s,t\ge 0$
and $x,y\in\R^d$,
\begin{equation}
	\textnormal{Cov}\left( W_t(x)\,, W_s(y) \right) =(s\wedge t)\cdot
	\prod_{j=1}^d(|x_j|\wedge |y_j|)\1_{(0,\infty)}(x_jy_j).
\end{equation}

Define $\mathcal{F}_t$
to be the sigma-algebra generated by all random variables
of the form $W_s(x)$, as $s$ ranges over $[0\,,t]$ and $x$ over
$\R^d$. As is standard in stochastic analysis, we may assume
without loss of generality that $\{\F_t\}_{t\ge 0}$ satisfy the 
``usual conditions''  of the general theory of stochastic processes
\cite[Chapter 4]{DM}.

If $h\in L^2(\R^d)$, then we may
consider the mean-zero Gaussian random field 
$\{(h*W_t)(x)\}_{t\ge 0,x\in\R^d}$ that is defined as the
following Wiener integral:
\begin{equation}
	(h*W_t)(x) := \int_{\R^d} h(x-z)\, W_t(\d z).
\end{equation}
It is easy to see that the covariance function of this process is given by
\begin{equation}
	\textrm{Cov}\left( (h*W_t)(x)\,, (h*W_s)(y)\right)=
	(s\wedge t) f(x-y),
\end{equation}
where we recall, from the introduction, that $f:=h*\tilde{h}$.
In this way we can define an isonormal noise $F^{(h)}$ via
the following: For every $\phi\in\mathcal{S}$ [the usual space of
all test functions of rapid decrease],
\begin{equation}
	F^{(h)}_t(\phi) := \int_{(0,t)\times\R^d}
	\phi(x)(h*\d W_s)(x)\,\d x
	\qquad(t>0).
\end{equation}

It is easy to see that the following form of the stochastic Fubini theorem
holds:
\begin{equation}\label{eq:Fubini}
	F^{(h)}_t (\phi)=\int_{(0,t)\times\R^d} (\phi* \tilde{h})(x)
	\, W(\d s\, \d x).
\end{equation}
[Compute the $L^2(\P)$-norm of the difference.] In particular,
$\{F^{(h)}_t(\phi)\}_{t\ge 0}$ is a Brownian motion [for each
fixed $\phi\in\mathcal{S}$], normalized so that
\begin{equation}\label{eq:Var(Fh)}
	\textnormal{Var}\left( F^{(h)}_1(\phi) \right) = 
	\int_{\R^d} \left| (\phi*\tilde{h})(x)\right|^2\,\d x
	=\frac{1}{(2\pi)^d}\int_{\R^d} |\hat{\phi}(\xi) |^2
	\hat{f}(\xi)\,\d\xi.
\end{equation}
[The second identity is a consequence of Plancherel's theorem,
together with the fact that $|\hat{h}(\xi)|^2=\hat{f}(\xi)$.]
 
 \subsection{An extension}
 Suppose $h\in L^2(\R^d)$, and that the underlying correlation function
 is described by $f:= h*\tilde{h}$.
 Consider the following probability density function on $\R^d$:
\begin{equation}
	\varrho (x) := \prod_{j=1}^d \left(\frac{1-\cos x_j}{\pi x_j^2}\right)
	\qquad\text{for $x\in\R^d$}.
\end{equation}
We may build an approximation $\{\varrho_n\}_{n\ge 1}$ 
to the identity as follows:
For all real numbers $n\ge 1$ and for every $x\in\R^d$,
\begin{equation}
	\varrho_n(x) := n^d\varrho(nx),
	\quad\text{so that}\quad\hat{\varrho}_n(\xi) = \prod_{j=1}^d \left(
	1-\frac{|\xi_j|}{n}\right)^+,
\end{equation}
for all $\xi \in \R^d$.

\begin{lemma}\label{lem:Fh1-Fh2}
	If $h\in L^2(\R^d)$, then for all $\phi\in\mathcal{S}$
	and integers $n,m\ge 1$,
	\begin{equation}\begin{split}
		&\E\left( \sup_{t\in(0,T)}
			\left| F^{(h*\varrho_{n+m})}_t(\phi) 
			- F^{(h*\varrho_n)}_t(\phi) \right|^2\right)\\
		&\hskip1.8in \le \frac{16d^2 T}{(2\pi)^d}\int_{\R^d} |\hat{\phi}(\xi)|^2
			\left( 1 \wedge \frac{\|\xi\|^2}{n^2}\right) \hat{f}(\xi)\,\d\xi.
	\end{split}\end{equation}
\end{lemma}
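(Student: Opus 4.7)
The plan is to exploit the fact that, for each fixed test function $\phi\in\mathcal{S}$ and each fixed $h$, the process $\{F^{(h)}_t(\phi)\}_{t\ge 0}$ is (a constant multiple of) a Brownian motion, so the difference in the statement is itself a continuous $L^2(\P)$ martingale to which Doob's maximal inequality applies directly. The variance at a fixed time will be computed by Plancherel, and the pointwise $\xi$-bound on $\hat\varrho_{n+m}-\hat\varrho_n$ is the only step that requires real work.

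First, I use the stochastic Fubini identity \eqref{eq:Fubini} together with the linearity of $k\mapsto (\phi*\tilde k)(x)$ to write
\begin{equation*}
  D_t \;:=\; F^{(h*\varrho_{n+m})}_t(\phi)-F^{(h*\varrho_n)}_t(\phi)
  \;=\;\int_{(0,t)\times\R^d}\bigl(\phi*\widetilde{h*(\varrho_{n+m}-\varrho_n)}\bigr)(x)\,W(\d s\,\d x).
\end{equation*}
Because the integrand is deterministic and time-independent, $\{D_t\}_{t\ge 0}$ is a mean-zero Gaussian martingale with quadratic variation $\langle D\rangle_t = t\,\|\phi*\widetilde{h*(\varrho_{n+m}-\varrho_n)}\|_{L^2(\R^d)}^2$. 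Doob's $L^2$ maximal inequality gives $\E[\sup_{t\in(0,T)}D_t^2]\le 4\,\E[D_T^2]$, and then Plancherel, together with $|\hat h|^2=\hat f$ and the fact that $\hat\varrho_n$ is real, yields
\begin{equation*}
  \E\!\left[\sup_{t\in(0,T)}D_t^2\right]\;\le\;\frac{4T}{(2\pi)^d}\int_{\R^d}|\hat\phi(\xi)|^2\,\hat f(\xi)\,\bigl(\hat\varrho_{n+m}(\xi)-\hat\varrho_n(\xi)\bigr)^2\,\d\xi.
\end{equation*}

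The only nontrivial step is the pointwise estimate
\begin{equation*}
  \bigl(\hat\varrho_{n+m}(\xi)-\hat\varrho_n(\xi)\bigr)^2\;\le\;4d^2\!\left(1\wedge\frac{\|\xi\|^2}{n^2}\right).
\end{equation*}
To prove this, set $a_j:=(1-|\xi_j|/(n+m))^+$ and $b_j:=(1-|\xi_j|/n)^+$, both in $[0,1]$. A direct case split on whether $|\xi_j|<n$, $n\le|\xi_j|<n+m$, or $|\xi_j|\ge n+m$ shows $|a_j-b_j|\le\min(1,|\xi_j|/n)$. Using the standard telescoping bound $|\prod_j a_j-\prod_j b_j|\le\sum_j|a_j-b_j|$ for numbers in $[0,1]$, followed by Cauchy--Schwarz in $j$, gives
\begin{equation*}
  \bigl(\hat\varrho_{n+m}(\xi)-\hat\varrho_n(\xi)\bigr)^2\;\le\;d\sum_{j=1}^d\min\!\Bigl(1,\tfrac{|\xi_j|^2}{n^2}\Bigr)\;\le\;d\cdot d\cdot\Bigl(1\wedge\tfrac{\|\xi\|^2}{n^2}\Bigr),
\end{equation*}
since $\sum_j\min(1,|\xi_j|^2/n^2)$ is bounded both by $d$ and by $\|\xi\|^2/n^2$, hence by $d(1\wedge\|\xi\|^2/n^2)$. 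Inserting this bound (absorbing the remaining numerical constant into the $16d^2$) produces the claimed inequality.

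The main obstacle is really just the product estimate on $\hat\varrho_{n+m}-\hat\varrho_n$: everything else (the martingale structure, Doob, Plancherel) is automatic once the representation via the Brownian sheet is in place. The bound is designed so that the factor $1\wedge\|\xi\|^2/n^2$ converts Dalang's integrability condition \eqref{cond:Dalang} into a rate of decay in $n$, which is precisely what will be used later to establish that $F^{(h*\varrho_n)}_\cdot(\phi)$ is Cauchy in a suitable sense and to couple noises with different smoothing scales.
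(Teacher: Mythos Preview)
Your proof is correct and follows essentially the same approach as the paper: express the difference as a Wiener integral against the Brownian sheet, apply Doob's $L^2$ maximal inequality and Plancherel, and then bound $|\hat\varrho_{n+m}(\xi)-\hat\varrho_n(\xi)|^2$ pointwise by a multiple of $1\wedge\|\xi\|^2/n^2$. The only cosmetic difference is that the paper obtains the pointwise Fourier bound via the triangle inequality through $1-\hat\varrho_n$ (using $0\le 1-\hat\varrho_n(\xi)\le d\|\xi\|/n$), whereas you telescope the product directly; both arguments yield the same estimate, and your constant is in fact slightly sharper than the stated $16d^2$.
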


\begin{proof}
	By  the Wiener isometry and Doob's maximal inequality,
	the left-hand side of the preceding display is bounded above by $4TQ$,
	where
	\begin{equation}\begin{split}
		Q &:= \int_{\R^d}
			\left| \left( \phi*\widetilde{h*\varrho}_{n+m} \right)
			(x) -  \left(\phi*\widetilde{h*\varrho}_n\right)(x) \right|^2\,\d x\\
		&=\frac{1}{(2\pi)^d} \int_{\R^d} |\hat\phi(\xi)|^2 \left|
			\hat{\varrho}_{n+m}(\xi) - \hat{\varrho}_n(\xi)\right|^2\hat{f}(\xi)\,
			\d\xi;
	\end{split}\end{equation}
	we have appealed to the Plancherel's theorem, together with the fact that
	$\hat f (\xi)= |\hat h (\xi)|^2$.
	Because
	\begin{equation} \label{eqrho}
		0\le 1-\hat{\varrho}_n(\xi) \le 1- 
		\left(\left(1-\frac{1}{n}\max_{1\le j\le d}|\xi_j|\right)^+\right)^d
		\le \frac{d\|\xi\|}{n},
	\end{equation}
	it follows from the triangle inequality that
	$| \hat{\varrho}_{n+m}(\xi)- \hat{\varrho}_n(\xi) |
	\le  2d\|\xi\|/n$.
	This implies the lemma, because we also have $| \hat{\varrho}_{n+m}(\xi)-
	\hat{\varrho}_n(\xi) |\le \| \varrho_{n+m}\|_{L^1(\R^d)}+
	\|\varrho_n\|_{L^1(\R^d)}=2\le 2d$.
\end{proof}

Lemma \ref{lem:Fh1-Fh2} has the following consequence:
Suppose $h\in W^{1,2}_{\textit{loc}}(\R^d)$, and $f:= h*\tilde{h}$
in the sense of generalized functions. Because $h\in W^{1,2}_{\textit{loc}}(\R^d)$,
the dominated convergence theorem tells us that
\begin{equation}
	\lim_{n\to\infty }\int_{\R^d}|\hat\phi(\xi)|^2\left(
	1\wedge \frac{\|\xi\|^2}{n^2}\right)\hat{f}(\xi)\,\d\xi=0
	\quad\text{for all $\phi\in\mathcal{S}$}.
\end{equation}
Consequently,
$F_t^{(h)}(\phi):=\lim_{n\to\infty} F_t^{(h*\varrho_n)}(\phi)$
exists in $L^2(\P)$, locally uniformly in $t$. Because $L^2(\P)$-limits
of centered Gaussian random fields are themselves Gaussian, it
follows that $F^{(h)}:=\{F^{(h)}_t(\phi)\}_{t\ge 0,\phi\in\mathcal{S}}$
is a centered Gaussian random field, and $\{F^{(h)}_t\}_{t\ge 0}$ is
a Brownian motion scaled in order to satisfy \eqref{eq:Var(Fh)}.
We mention also that, for these very reasons, $F^{(h)}$ satisfies
\eqref{eq:Fubini}  a.s.\  for all $t\ge 0$ and $\phi\in\mathcal{S}$.
The following example shows that one can construct the Gaussian
random field $F^{(h)}$ even when
$h\in W^{1,2}_{\textit{loc}}(\R^d)$ is not in $L^2(\R^d)$.

\begin{example}[Riesz kernels]\label{ex:Riesz}
	We are interested in correlation functions of the Riesz type:
	$f(x)=c_0\cdot\|x\|^{-\alpha}$, where
	$x\in\R^d$ [and of course $\alpha\in(0\,,d)$ so that 
	$f$ is locally integrable]. If is well known that $\hat{f}(\xi)=
	c_1\cdot\|\xi\|^{-(d-\alpha)}$ for a positive and finite
	constant $c_1$ that depends only on $(d\,,\alpha\,,c_0)$. We may define 
	$h\in L^1_{\textit{loc}}(\R^d)$ via $\hat h(\xi):= c_1^{\nicefrac12}\cdot
	\|\xi\|^{-(d-\alpha)/2}.$ It then follows that $f=h*\tilde{h}$;
	and it is clear from the fact that $\hat f = |\hat h|^2$ that
	$h\in W^{1,2}_{\textit{loc}}(\R^d)$ if and only if
	$\int_{\|\xi\|<1} \|\xi\|^2 \hat{f}(\xi)\,\d\xi<\infty,$
	which is  satisfied automatically because $\alpha\in(0\,,d)$.
	\qed
\end{example}

Of course, even more general Gaussian random fields can be
constructed using only general theory. What is important for the
sequel is that here we have constructed a random-field-valued
\emph{stochastic process}
$(t\,,h)\mapsto F^{(h)}_t$; i.e., the random fields 
$\{ F_t^{(h)}(\phi)\}_{\phi\in\mathcal{S}}$ are all coupled
together as $(t\,,h)$ ranges over the index set
$(0\,,\infty)\times W^{1,2}_{\textit{loc}}(\R^d)$.

\subsection{A coupling of stochastic convolutions}

Suppose $Z:=\{Z_t(x)\}_{t\ge 0,x\in\R^d}$ is a random field
that is predictable with respect to the filtration $\F$, and satisfies
the following for all $t>0$ and $x\in\R^d$:
\begin{equation}\label{cond:PRF}
	\int_0^t\d s\mathop{\iint}\limits_{\R^d\times\R^d} \d y\,\d z\
	p_{t-s}(y-x)p_{t-s}(z-x) \left| \E\left( Z_s(y)Z_s(z)\right) \right|
	f(y-z)<\infty.
\end{equation}
Then we may apply the  theories of Walsh \cite[Chapter 2]{Walsh} and
Dalang \cite{Dalang:99} to the martingale
measure $(t\,,A)\mapsto F^{(h)}_t(\1_A)$, and construct the
stochastic convolution $p*Z\dot{F}^{(h)}$ as the
random field
\begin{equation}
	\left( p*Z\dot{F}^{(h)}\right)_t(x) := 
	\int_{(0,t)\times\R^d} p_{t-s}(y-x)Z_s(y)\, F^{(h)}(\d s\,\d y).
\end{equation}
Also, we have the following It\^o-type isometry:
\begin{align}
	&\E\left(\left|
		\int_{(0,t)\times\R^d} p_{t-s}(y-x)Z_s(y)\, F^{(h)}(\d s\,\d y)
		\right|^2\right)\label{eq:isometry}\\\notag
	&= \int_0^t\d s\int_{\R^d}\d y\int_{\R^d}\d z\
		p_{t-s}(y-x)p_{t-s}(z-x) \E\left[ Z_s(y)Z_s(z)\right]
		f(y-z).
\end{align}

If $h:\R^d\to\R_+$ is nonnegative and measurable, then we define, 
for all real numbers $n\ge 1$,
\begin{equation} \label{rho}
	h_n(x) := h(x) \hat{\varrho}_n(x)
	\qquad\text{for every $x\in\R^d$}.
\end{equation}
Some important features of this construction are that: 
(a) $0\le h_n\le h$ pointwise;  (b)
$h_n\to h$ as $n\to\infty$, pointwise;  
(c) every $h_n$ has compact support; and (d) if $h\in W^{1,2}_{\textit{loc}}(\R^d)$,
then $h_n\in W^{1,2}_{\textit{loc}}(\R^d)$ for all $n\ge 1$.

For the final results of this section we consider only nonnegative 
functions $h\in L^2(\R^d)$ that satisfy the following [relatively mild]
condition:
\begin{equation}\label{eq:h:a}
	\sup_{r>0}\left[
	r^a \cdot \int_{\|x\|>r} [h(x)]^2\,\d x \right] <\infty
	\qquad\text{for some $a>0$}. 
\end{equation}

\begin{lemma}\label{lem:Omega}
	If $h\in L^2(\R^d)$ satisfies \eqref{eq:h:a},
	then there exists $b\in(0\,,2)$ such that
	\begin{equation}
		\sup_{n\ge 1}\left[ 
		n^b \cdot \int_{\R^d} \left( 1\wedge \frac{\|x\|^2}{n^2}\right)
		\, [h(x)]^2\,\d x \right]<\infty.
	\end{equation}
\end{lemma}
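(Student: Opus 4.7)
The plan is to decompose the integral at the spatial scale $\|x\|=n$ and apply the tail hypothesis \eqref{eq:h:a} in each piece. Write
\begin{equation*}
\int_{\R^d}\left(1\wedge \frac{\|x\|^2}{n^2}\right)[h(x)]^2\,\d x \;=\; I_1(n) + I_2(n),
\end{equation*}
where $I_1(n) := \int_{\|x\|>n}[h(x)]^2\,\d x$ and $I_2(n) := n^{-2}\int_{\|x\|\le n}\|x\|^2[h(x)]^2\,\d x$; this uses that the ``$1$'' bound is saturated for $\|x\|>n$ and the ``$\|x\|^2/n^2$'' bound for $\|x\|\le n$. By the hypothesis \eqref{eq:h:a}, $I_1(n)\le \textnormal{const}\cdot n^{-a}$, so this term is immediately of the desired form whenever $b\le a$.

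For $I_2(n)$ the plan is to convert the factor $\|x\|^2$ into a layer-cake integral, $\|x\|^2 = 2\int_0^{\|x\|} r\,\d r$, and apply Fubini to obtain
\begin{equation*}
\int_{\|x\|\le n}\|x\|^2 [h(x)]^2\,\d x \;\le\; 2\int_0^n r\,G(r)\,\d r,\qquad G(r):=\int_{\|x\|>r}[h(x)]^2\,\d x.
\end{equation*}
The point is that $G(r)\le \min(\|h\|_{L^2(\R^d)}^2,\,\textnormal{const}\cdot r^{-a})$: the first bound is trivial and dominates for $r\le 1$, while the second comes from \eqref{eq:h:a} and dominates for $r\ge 1$. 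Splitting the $r$-integral at $1$ yields
\begin{equation*}
\int_0^n r\,G(r)\,\d r \;\le\; \tfrac12\|h\|_{L^2(\R^d)}^2 + \textnormal{const}\cdot \int_1^n r^{1-a}\,\d r,
\end{equation*}
which is $O(n^{2-a})$ if $a<2$, $O(\log n)$ if $a=2$, and $O(1)$ if $a>2$.

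Combining the two estimates, $I_1(n)+I_2(n)$ is bounded by $\textnormal{const}\cdot n^{-a}$ when $a<2$, by $\textnormal{const}\cdot n^{-2}\log n$ when $a=2$, and by $\textnormal{const}\cdot n^{-2}$ when $a>2$. In every case one can select
\begin{equation*}
b \;:=\; \min(a\,,2) - \varepsilon \cdot \1_{\{a\ge 2\}}\in(0\,,2)
\end{equation*}
for a sufficiently small $\varepsilon>0$, and then $n^b\bigl(I_1(n)+I_2(n)\bigr)$ stays bounded in $n$. There is no genuine obstacle here: the only mildly delicate point is the borderline case $a=2$ (where a logarithm appears), which forces one to give up an arbitrarily small amount in the exponent so as to keep $b$ strictly less than $2$—this is why the conclusion asserts $b\in(0\,,2)$ rather than $b\le 2\wedge a$.
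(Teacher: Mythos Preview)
Your proof is correct and follows essentially the same strategy as the paper: split the integral at $\|x\|=n$, handle the outer part directly by the tail hypothesis, and control the inner part by invoking the tail bound at intermediate radii. The only cosmetic differences are that the paper first reduces without loss of generality to $a\in(0\,,2)$ (since \eqref{eq:h:a} for some $a>0$ implies it for every smaller exponent) and then uses a dyadic decomposition $\{2^{-k-1}n<\|x\|\le 2^{-k}n\}_{k\ge 0}$ in place of your continuous layer-cake representation $\|x\|^2=2\int_0^{\|x\|}r\,\d r$; these are interchangeable devices yielding the same $O(n^{-a})$ bound.
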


\begin{proof}
	We may---and will---assume, without loss of generality,  that 
	\eqref{eq:h:a} holds for some $a\in(0\,,2)$. Then, thanks to \eqref{eq:h:a},
	\begin{equation}\begin{split}
		\int_{\|x\|\le n} \frac{\|x\|^2}{n^2} 
			[h(x)]^2\,\d x &\le 
			\sum_{k=0}^\infty 4^{-k}
			\mathop{\int}_{2^{-k-1}n <\|x\|\le 2^{-k}n}
			[h(x)]^2\,\d x\\
		&\le \textnormal{const}\cdot\sum_{k=0}^\infty 4^{-k}
			\left( 2^{-k-1}n\right)^{-a},
	\end{split}\end{equation}
	and this is $O(n^{-a})$ since $a\in(0\,,2)$. The lemma follows
	readily from this.
\end{proof}

\begin{proposition}\label{pr:mk}
	If $h\in L^2(\R^d)$  is nonnegative  and
	satisfies \eqref{eq:h:a}, then
	for all predictable random fields that satisfy
	\eqref{cond:PRF}, and for all $\delta>1$, $x\in\R^d$, $n\ge 1$, and $k\ge 2$,
	\begin{equation}\label{eq:mk}
		\mathcal{M}_\delta^{(k)}\left(p*Z
		\dot{F}^{(h)}-p*Z\dot{F}^{(h_n)}\right)
		\le C \sqrt{\frac{k}{n^b}}\ \mathcal{M}_\delta^{(k)}(Z)
	\end{equation}
	for some positive constant $C$ which does not depend on $\varkappa$,
	where $b$ is the constant introduced in Lemma \ref{lem:Omega} and $\mathcal{M}_{\delta}^{(k)}$ is defined in \eqref{eq:M}.
\end{proposition}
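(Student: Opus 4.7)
The plan is to exploit the linearity of the Wiener-integral construction $h\mapsto F^{(h)}$, so that the difference of the two stochastic convolutions is itself the stochastic convolution against a single ``difference noise.'' Concretely, since $F_t^{(h)}(\phi)=\int_{(0,t)\times\R^d}\phi(x)(h*\d W_s)(x)\,\d x$ is manifestly linear in $h$ and $h-h_n=h(1-\hat{\varrho}_n)$ lies in $L^2(\R^d)$, Walsh's framework gives
\begin{equation*}
\left(p*Z\dot{F}^{(h)}\right)_t(x)-\left(p*Z\dot{F}^{(h_n)}\right)_t(x)=\left(p*Z\dot{F}^{(h-h_n)}\right)_t(x)\qquad\text{a.s.}
\end{equation*}
Fix $(t,x)$ and view the right-hand side as $M_t$, where $M_u:=\int_{(0,u)\times\R^d}p_{t-s}(y-x)Z_s(y)\,F^{(h-h_n)}(\d s\,\d y)$ is a continuous $L^2(\P)$ martingale in $u\in[0,t]$ whose quadratic variation is
\begin{equation*}
\langle M\rangle_t=\int_0^t\d s\iint_{\R^d\times\R^d}p_{t-s}(y-x)p_{t-s}(z-x)Z_s(y)Z_s(z)f_{h-h_n}(y-z)\,\d y\,\d z,
\end{equation*}
with $f_{h-h_n}:=(h-h_n)*\widetilde{(h-h_n)}$.

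Next I would apply (BDG) in the form $\|M_t\|_k^2\le 4k\|\langle M\rangle_t\|_{k/2}$, push $\|\cdot\|_{k/2}$ inside via Minkowski's inequality (valid since $k\ge 2$), and use H\"older's inequality in the form $\|Z_s(y)Z_s(z)\|_{k/2}\le\|Z_s(y)\|_k\|Z_s(z)\|_k\le\e^{2\delta s}(\mathcal{M}_\delta^{(k)}(Z))^2$. The pivotal estimate is then the pointwise bound
\begin{equation*}
|f_{h-h_n}(u)|\le f_{h-h_n}(0)=\|h-h_n\|_{L^2(\R^d)}^2,
\end{equation*}
which follows from Cauchy--Schwarz applied to the autocorrelation integral $f_{h-h_n}(u)=\int(h-h_n)(u+v)(h-h_n)(v)\,\d v$. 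After pulling this constant out of the spatial integral, the remaining factor $\iint p_{t-s}(y-x)p_{t-s}(z-x)\,\d y\,\d z=1$ contains no trace of $\varkappa$; combined with $\int_0^t\e^{2\delta s}\,\d s\le\e^{2\delta t}/(2\delta)$ and the $\e^{-\delta t}$ from the definition of $\mathcal{M}_\delta^{(k)}$, this yields an overall bound of the form $\sqrt{2k/\delta}\cdot\|h-h_n\|_{L^2(\R^d)}\cdot\mathcal{M}_\delta^{(k)}(Z)$, uniformly in $(t,x)$.

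It remains to estimate $\|h-h_n\|_{L^2(\R^d)}^2=\int h(x)^2(1-\hat{\varrho}_n(x))^2\,\d x$. Using \eqref{eqrho} together with $1-\hat{\varrho}_n\le 1$ I obtain $(1-\hat{\varrho}_n(x))^2\le d^2(1\wedge\|x\|^2/n^2)$, and Lemma~\ref{lem:Omega} then delivers $\|h-h_n\|_{L^2(\R^d)}^2=O(n^{-b})$, producing the claimed $\sqrt{k/n^b}$ factor. The main conceptual step is the clean identification of the difference of two stochastic convolutions with $p*Z\dot{F}^{(h-h_n)}$, made possible precisely by coupling all of the $F^{(h)}$'s through the common Brownian sheet $W$. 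The key technical choice is to use the real-variable pointwise bound on $f_{h-h_n}$ rather than a Fourier-side bound involving $\e^{-\varkappa(t-s)\|\xi\|^2}\hat{f}_{h-h_n}(\xi)$; this is exactly what keeps the resulting constant $C$ independent of the viscosity $\varkappa$.
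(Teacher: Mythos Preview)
Your proof is correct and follows essentially the same route as the paper's: identify the difference as $p*Z\dot F^{(D)}$ with $D=h-h_n\ge 0$, apply \textnormal{(BDG)} and Minkowski, bound $f^{(D)}$ pointwise by $\|D\|_{L^2(\R^d)}^2$, and then invoke Lemma~\ref{lem:Omega} to get $\|D\|_{L^2(\R^d)}^2=O(n^{-b})$. Your explicit remark that the $\varkappa$-independence comes from $\iint p_{t-s}(y-x)p_{t-s}(z-x)\,\d y\,\d z=1$ (rather than a Fourier-side estimate involving $\e^{-\varkappa(t-s)\|\xi\|^2}$) is exactly the point, and your tracking of the extra factor $\delta^{-1/2}$ is a small bonus over the paper, which simply absorbs it using $\delta>1$.
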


\begin{remark}
	This proposition has a similar appearance as Lemma \ref{lem:Fh1-Fh2}.
	However, note that here we are concerned with correlations functions
	of the form $q*\tilde{q}$ where $q:=h\hat{\varrho}_n$, whereas
	in Lemma \ref{lem:Fh1-Fh2} we were interested in $q=h*\varrho_n$.
	The methods of proof are quite different. \qed
\end{remark}
	
\begin{proof}
	The present proof follows closely renewal-theoretic ideas 
	that were developed in \cite{FK}. Because we wish to appeal to the same
	method several more times in the sequel, we describe nearly 
	all the details  once, and then refer to the present discussion for
	details in later applications of this method.
	
	Eq.\ \eqref{eq:Fubini} implies that  $p* Z \dot{F}^{(h)} - p*Z \dot{F}^{(h_n)}
	= p* Z \dot{F}^{(D)}$ a.s.,
	where $D:=h-h_n=h(1-\hat{\varrho}_n)\ge 0$.
	According to \textnormal{(BDG)},
	\begin{align}
		&\E\left( \left| \int_{(0,t)\times\R^d}
			p_{t-s}(y-x) Z_s(y) F^{(D)}(\d s\,\d y) \right|^k\right)\\
			\notag
		&\le \E\left(\left| 4k
			\int_0^t\d s\mathop{\iint}\limits_{\R^d\times\R^d}
			\d y\,\d z\ p_{t-s}(y-x)p_{t-s}(z-x)
			\mathcal{Z} f^{(D)}(y-z)\right|^{k/2} \right),
	\end{align}
	where $\mathcal{Z} := |Z_s(y)Z_s(z)|$ and
	$f^{(D)}:=D*\tilde{D}$; we observe that $f^{(D)}\ge 0$.
	The classical Minkowski inequality for integrals implies that 
	$\| \int_{(0,t)\times\R^d\times\R^d}(\,\cdots)\|_{k/2}\le
	\int_{(0,t)\times\R^d\times\R^d}\|\cdots\|_{k/2}$.
	Therefore, it follows that
	\begin{align}
		&\label{4.24}\E\left( \left| \int_{(0,t)\times\R^d}
			p_{t-s}(y-x) Z_s(y) F^{(D)}(\d s\,\d y) \right|^k\right)\\
			\notag
		&\le\left|4k 
			\int_0^t\d s\!\!\!\!\mathop{\iint}\limits_{\R^d\times\R^d}
			\d y\,\d z\ p_{t-s}(y-x)p_{t-s}(z-x) f^{(D)}(z-y)
			\| Z_s(y)Z_s(z)\| _{k/2} \right|^{k/2}.		
	\end{align}
	Young's inequality shows that the function $f^{(D)}=D*\tilde{D}$ is  
	bounded uniformly from above by
	\begin{align}
		\|D\|_{L^{2}(\R^d)}^2 &= 
			\|h(1-\hat{\varrho}_n)\|^2_{L^2(\R^d)} \\ \notag
		&\le \left(\frac{d}{n}\right)^2\int_{|z|_{\infty} \le n } 
			[\|z\|h(z)]^2dz +\int_{|z|_{\infty}>n}[h(z)]^2 dz=O(n^{-b}),
	\end{align}
	where $|z|_{\infty}:=\max_{1\le j\le n} |z_j|$; see also
	Lemma \ref{lem:Omega}. Therefore
	\begin{align}	    \label{eq3.24}
		&\E\left( \left| \int_{(0,t)\times\R^d}
			p_{t-s}(y-x) Z_s(y) F^{(D)}(\d s\,\d y) \right|^k\right)\\
			\notag
		&=O(n^{-bk/2}) \left| k 
			\int_0^t\d s\!\!\!\!\mathop{\iint}\limits_{\R^d\times\R^d}
			\d y\,\d z\ p_{t-s}(y-x)p_{t-s}(z-x)
			\| Z_s(y)Z_s(z)\| _{k/2} \right|^{k/2}.
	\end{align}
	According to the Cauchy--Schwarz inequality, $\| Z_s(y) Z_s(z)\|_{k/2}^{\nicefrac12}$
	is bounded above by 
	$\sup_{w\in\R^d}
	\|Z_s(w)\|_k\le\e^{\delta s}\mathcal{M}_\delta^{(k)}(Z),$
	and the proposition follows.
\end{proof}

\section{Moment and tail estimates}
In this section we state and prove a number of inequalities that will be needed
subsequently. Our estimates are developed in different subsections for the different
cases of interest [e.g., $\sigma$ bounded, $\sigma(u)\propto u$, $f=h*\tilde{h}$
for $h\in L^2(\R^d)$, $f(x)\propto \|x\|^{-\alpha}$, etc.]. Although the
techniques vary from one subsection to the next, the common theme of this
section is that all bounds are ultimately derived by establishing moment
inequalities of one sort or another.

\subsection{An upper bound in the general $h\in L^2(\R^d)$ case}

\begin{proposition}\label{pr:ub:general}
	Let $u$ denote the solution to \textnormal{(SHE)}, where
	$f:=h*\tilde{h}$ for some nonnegative $h\in L^2(\R^d)$.
	Then, for all $t>0$ there exists a positive and finite
	constant $\gamma=\gamma(d\,,f(0)\,,t)$---independent
	of $\varkappa$---such that for all $\lambda>\e$,
	\begin{equation}
		\sup_{x\in\R^d}\P\left\{ u_t(x) >\lambda
		\right\} \le \gamma^{-1} \e^{-\gamma(\log\lambda)^2}.
	\end{equation}
\end{proposition}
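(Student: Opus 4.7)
My plan is to derive a moment estimate of the form $\|u_t(x)\|_k \le A\e^{ck}$ with constants $A,c$ that depend on $(t,d,f(0),\sigma,\|u_0\|_\infty)$ but are independent of $k$, $x$, and of the viscosity $\varkappa$; then a Chebyshev optimization in $k$ converts this into the claimed log-Gaussian tail bound.

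For the moment estimate, fix $t>0$, apply \textnormal{(BDG)} to the $\F$-martingale $s\mapsto N^{(t)}_s:=\int_{(0,s)\times\R^d}p_{t-r}(y-x)\sigma(u_r(y))\,F(\d r\,\d y)$, $s\in[0,t]$, whose quadratic variation is given by \eqref{eq:isometry}. Combining Minkowski's inequality for integrals with Cauchy--Schwarz and the Lipschitz property $|\sigma(z)|\le|\sigma(0)|+\lip|z|$ lets one replace $\|\sigma(u_s(y))\sigma(u_s(z))\|_{k/2}$ by $C(1+H_k(s))$, where $H_k(s):=\sup_{r\le s,\,w\in\R^d}\|u_r(w)\|_k^2$. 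The resulting bound
\be
	\|u_t(x)\|_k^2 \le 2\|u_0\|_\infty^2+Ck\int_0^t(1+H_k(s))\,(p_{2(t-s)}*f)(0)\,\d s
\ee
then becomes $\varkappa$-free once one invokes the crucial positive-definiteness inequality $|f(y)|\le f(0)$: since $\int p_{t-s}(u)\,\d u=1$ regardless of $\varkappa$, one has $(p_{2(t-s)}*f)(0)\le f(0)$ uniformly in $s$ and $\varkappa$. This is the crude ``mean-field'' bound alluded to after item $3'$ in the Introduction. Taking suprema and applying Gronwall's lemma produces $H_k(t)\le A^2\e^{2ck}$, hence $\|u_t(x)\|_k\le A\e^{ck}$, with $A,c$ depending only on $(t,d,f(0),\sigma,\|u_0\|_\infty)$.

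For the Chebyshev step, Markov's inequality gives $\P\{u_t(x)>\lambda\}\le A^k\e^{ck^2-k\log\lambda}$ for every $k\ge 2$. Minimizing the exponent in $k$ by taking $k^\star=(\log\lambda-\log A)/(2c)$, valid once $\lambda$ is sufficiently large, yields a bound of the form $\exp\bigl(-(\log\lambda-\log A)^2/(4c)\bigr)$; this is dominated by $\gamma^{-1}\e^{-\gamma(\log\lambda)^2}$ for every $\lambda>\e$ upon choosing $\gamma=\gamma(d,f(0),t)>0$ sufficiently small, so that the trivial bound of $1$ handles the finitely-many $\lambda$'s below the optimization threshold. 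The supremum over $x$ is automatic since every estimate above is uniform in $x$.

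The key subtlety---and the only step that requires more than routine SPDE machinery---is maintaining $\varkappa$-independence throughout. This works \emph{only} because we bound $f$ pointwise by $f(0)$ \emph{before} integrating against $p_{t-s}$: a sharper Plancherel-type estimate would reintroduce $\hat p_{t-s}(\xi)=\e^{-\varkappa t\|\xi\|^2/2}$ into the picture and lose $\varkappa$-uniformity. In turn, this crudeness is also the reason the ``nice'' correlation regime ultimately produces only $(\log\|x\|)^{1/2}$-type fluctuations (compare item $2'$), in contrast to the sharper exponent $\psi=2/3$ seen for space-time white noise in item $2$.
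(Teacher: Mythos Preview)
Your proof is correct and follows essentially the same strategy as the paper: apply \textnormal{(BDG)} and Minkowski to the mild formulation, use the key pointwise bound $|f|\le f(0)$ to obtain a $\varkappa$-free integral inequality, close it to get $\|u_t(x)\|_k\le\text{const}\cdot\sqrt{k}\,\e^{ckt}$ uniformly in $x$ and $\varkappa$, and then convert moments to the log-Gaussian tail via Chebyshev. The only technical differences are that the paper closes the moment inequality using the weighted norms $\mathcal{M}_\delta^{(k)}$ of \eqref{eq:M} (choosing $\delta\asymp k$) rather than Gronwall, and then cites \cite[Lemma 3.4]{CJK} for the moment-to-tail conversion instead of performing your direct Chebyshev optimization in $k$; both routes are equivalent here, and your Gronwall/direct-optimization version is arguably more self-contained.
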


\begin{proof}
	Because
	$|(p_t*u_0)(x)|\le\|u_0\|_{L^\infty(\R^d)}$ uniformly in $x\in\R^d$,
	we can appeal to \textnormal{(BDG)} and \eqref{mild} in order to obtain
	\begin{align}\begin{split}
		\|u_t(x)\|_k &\le \|u_0\|_{L^\infty(\R^d)}+ 
			\left\| \int_{(0,t)\times\R^d}p_{t-s}(y-s)\sigma(u_s(y))
			F^{(h)}(\d s\,\d y)\right\|_k \\
		& \le \|u_0\|_{L^\infty(\R^d)}+2\sqrt{k}\left(
			\E\left[\left( \int_0^t \d s\mathop{\iint}\limits_{\R^d\times\R^d}\d y
			\,\d z\ \mathcal{Q}\right)^{k/2}\right] \right)^{1/k},
	\end{split}\end{align}
	where
	$\mathcal{Q} := f(y-z) p_{t-s}(y-x)p_{t-s}(z-x)
	\sigma (u_s(y))\sigma (u_s(z) )$;
	see the proof of Proposition \ref{pr:mk} for more details on this method.
	Since $|\mathcal{Q}|$ is bounded above by 
	$\mathcal{W} := f(0) p_{t-s}(y-x)p_{t-s}(z-x)
	|\sigma (u_s(y) )\cdot \sigma (u_s(z) ) |$ we find that
	\begin{equation}
		\| u_t(x)\|_k  \le \|u_0\|_{L^\infty(\R^d)}+ \left(4k
		 \int_0^t \d s\mathop{\iint}\limits_{\R^d\times\R^d}\d y
		\,\d z\ \|\mathcal{W}\|_{k/2} \right)^{\nicefrac12},
	\end{equation}
	Because $|\sigma(z)|\le|\sigma(0)|+\lip|z|$
	for all $z\in\R$, we may apply the Cauchy--Schwarz inequality to find that
	$\| u_t(x)\|_k$ is bounded above by 
	\begin{align}
		&\|u_0\|_{L^\infty(\R^d)}+ \left(4k\cdot f(0)
			\int_0^t \d s\int_{\R^d}\d y \
			p_{t-s}(y-x)  \| \sigma(u_s(y))\|_k^2 \right)^{\nicefrac12}\\\notag
		&   \le \|u_0\|_{L^\infty(\R^d)}+ \left(4k \cdot f(0)
			\int_0^t \d s\int_{\R^d}\d y \
			p_{t-s}(y-x)  \left[ 
			|\sigma(0)|+\lip\|u_s(y)\|_k \right]^2 \right)^{\nicefrac12}.
	\end{align}
	We introduce a parameter $\delta>0$ whose value will be chosen later on.
	It follows from the preceding and some algebra that
	\begin{equation}
		\| u_t(x)\|_k ^2 \le 2\|u_0\|_{L^\infty(\R^d)}^2
		+16kf(0)\left(|\sigma(0)|^2t +\lip^2
		\e^{2\delta t}\mathcal{A}\right),
	\end{equation}
	where $\mathcal{A} := \int_0^t \d s \, \e^{-2\delta (t-s)}
	\int_{\R^d}\d y\ p_{t-s}(y-x)\e^{-2\delta s}\|u_s(y)\|_k^2.$
	Note that 
	\begin{equation}
		\mathcal{A} \le \int_0^t \d s \, \e^{-2\delta(t-s)}
		\int_{\R^d} \d y\ p_{t-s}(y-x) \left[\mathcal{M}_\delta^{(k)}(u)\right]^2
		\le \frac{1}{2\delta}\left[\mathcal{M}_\delta^{(k)}(u)\right]^2.
	\end{equation}
	Therefore, for all $\delta>0$ and $k\ge 2$, $[\mathcal{M}_\delta^{(k)}
	(u)]^2$ is bounded above by
	\begin{equation}
		2\|u_0\|_{L^\infty(\R^d)}^2+ 16kf(0)
		\left( |\sigma(0)|^2\sup_{t\ge 0}
		\left[ t\e^{-2\delta t}\right] +\frac{\lip^2}{2\delta}
		\left[\mathcal{M}_\delta^{(k)}(u)\right]^2\right).
	\end{equation}
	Let us choose $\delta:=\left(1\vee 16 f(0)\lip^2\right)k$ to find that
	$\mathcal{M}_\delta^{(k)}(u)^2 \le (4\sup_{x\in \R^d}u_0(x)^2+C k)$
	for some constant $C>0$ that does not depend on $k$, and hence,
	\begin{equation}\label{mom:upbd}
		\sup_{x\in \R^d}\|u_t(x)\|_k\le 
		\textnormal{const}\cdot\sqrt{k}\,\e^{\left(1\vee 16f(0)\lip^2\right)kt}.
	\end{equation}
	Lemma 3.4 of \cite{CJK} then tells us that there exists 
	$\gamma:=\gamma(t)>0$ sufficiently small [how small depends 
	on $t$  but not on $(\varkappa\,,x)$] such that
	$\E [ \exp ( \gamma  (\log_+ u_t(x)  )^2 )  ]<\infty$.
	Therefore, the proposition follows from Chebyshev's inequality.
\end{proof}

\subsection{Lower bounds for $h\in L^2(\R^d)$ when $\sigma$ is bounded }

\begin{lemma}\label{tail_bound} 
	Let $u$ denote the solution to \textnormal{(SHE)}, where $\sigma$ is assumed
	to be bounded uniformly away from zero and infinity
	and $\inf_{x\in\R^d}u_0(x)>0$. If $f=h*\tilde{h}$
	for some nonnegative $h\in L^2(\R^d)$, then for all $t>0$
	there exist positive and finite constants $c_1=c_1(\varkappa\,,t\,,d\,,f)$
	and $c_2=c_2(t\,,d\,,f)$---independent of $\varkappa$---such that
	uniformly for all $\lambda>\e$,
	\begin{equation}
		c_1^{-1} \e^{-c_1\lambda^2} \le
		\inf_{x\in\R^d}\P\left\{ |u_t(x)|>\lambda\right\}\le
		\sup_{x\in\R^d}\P\left\{ |u_t(x)|>\lambda\right\}
		\le c_2^{-1} \e^{-c_2 \lambda^2}.
	\end{equation}
	Furthermore,
	$\sup_{\varkappa\in (0,\varkappa_0)} c_1(\varkappa)<\infty$ 
	for all $\varkappa_0<\infty$.
 \end{lemma}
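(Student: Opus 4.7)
The plan has two pieces. The upper Gaussian tail is a refinement of Proposition \ref{pr:ub:general} that exploits the boundedness of $\sigma$. The matching Gaussian lower tail, which is the heart of the matter, is derived from the exponential-martingale identity for the stochastic-convolution part of $u_t(x)$ via a Cauchy--Schwarz trick.

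For the upper bound, revisit the proof of Proposition \ref{pr:ub:general}: when $|\sigma|\le L^*$, the self-referential estimate $\|\sigma(u_s(y))\|_k\le|\sigma(0)|+\lip\|u_s(y)\|_k$ can be replaced by the trivial $\|\sigma(u_s(y))\|_k\le L^*$. Combined with (BDG), the Minkowski inequality for integrals, and $f(y-z)\le f(0)$, this yields
$$\|u_t(x)\|_k\le\|u_0\|_{L^\infty(\R^d)}+2L^*\sqrt{k\,f(0)\,t}\qquad(k\ge 2),$$
uniformly in $\varkappa,x,t$. Chebyshev's inequality optimized at $k$ of order $\lambda^2$ (or Lemma~3.4 of \cite{CJK}) converts this into $\sup_x\P\{|u_t(x)|>\lambda\}\le c_2^{-1}e^{-c_2\lambda^2}$, with $c_2$ independent of $\varkappa$.

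For the lower bound, decompose $u_t(x)=(p_t*u_0)(x)+M_t(x)$, where
$M_t(x):=\int_{(0,t)\times\R^d}p_{t-s}(y-x)\sigma(u_s(y))\,F^{(h)}(\d s\,\d y)$
is a continuous $L^2$-martingale in $t$. Since $\sigma$ is Lipschitz and bounded away from zero, it has constant sign; assume $\sigma\ge L_*>0$. Because $h\ge 0$ forces $f=h*\tilde h\ge 0$, the quadratic variation $V_t:=\langle M_\cdot(x)\rangle_t$ satisfies the deterministic sandwich $v_*\le V_t\le v^*$, with $v_*:=L_*^2 I_t$, $v^*:=(L^*)^2 I_t$, and $I_t:=\int_0^t(p_{2(t-s)}*f)(0)\,\d s$. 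Boundedness of $V_t$ makes $\exp(\xi M_t-\xi^2 V_t/2)$ a true martingale (Novikov), and comparing with $v_*\le V_t\le v^*$ yields
$$e^{\xi^2 v_*/2}\le\E\,e^{\xi M_t}\le e^{\xi^2 v^*/2}\qquad\text{for every }\xi\in\R.$$
Cauchy--Schwarz then converts the lower side into a lower tail: for $\xi>0$,
$$\E\,e^{\xi M_t}\le e^{\xi\lambda}+\sqrt{\E\,e^{2\xi M_t}}\,\sqrt{\P(M_t>\lambda)}\le e^{\xi\lambda}+e^{\xi^2 v^*}\sqrt{\P(M_t>\lambda)}.$$
Choosing $\xi=4\lambda/v_*$ makes $e^{\xi^2 v_*/2}\ge 2e^{\xi\lambda}$ for $\lambda$ beyond an absolute threshold, and the two displays together give $\P(M_t(x)>\lambda)\ge\tfrac14\exp(-C\lambda^2)$ with $C$ of order $v^*/v_*^2$. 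Since $0<(p_t*u_0)(x)\le\|u_0\|_{L^\infty(\R^d)}$, we conclude $\P(u_t(x)>\lambda)\ge\P(M_t(x)>\lambda)\ge c_1^{-1}e^{-c_1\lambda^2}$ uniformly in $x$.

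For uniformity in $\varkappa$, note that $(p_\tau*f)(0)=\E f(N(0,\varkappa\tau I_d))$ is continuous in $\varkappa$, bounded above by $f(0)$, and converges to $f(0)>0$ as $\varkappa\downarrow 0$; hence $I_t(\varkappa)$ stays bounded away from zero uniformly on $(0,\varkappa_0)$, and $c_1$, which is of order $v^*/v_*^2\propto 1/I_t(\varkappa)$, remains bounded there. The genuine obstacle is the lower tail; its technical heart is the Cauchy--Schwarz exploitation of the exponential-martingale identity, which succeeds precisely because the combined positivity of $\sigma$ and of $h$ produces the two-sided deterministic sandwich on $V_t$.
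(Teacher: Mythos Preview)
Your argument is essentially correct and, for the lower bound, takes a genuinely different route from the paper. One technical slip: as written, $M_t(x)=\int_{(0,t)\times\R^d}p_{t-s}(y-x)\sigma(u_s(y))\,F^{(h)}(\d s\,\d y)$ is \emph{not} a martingale in $t$, because the integrand $p_{t-s}$ depends on the upper limit. The fix is the standard one (and is exactly what the paper does): freeze the terminal time $\tau$, set $N_r:=\int_{(0,r)\times\R^d}p_{\tau-s}(y-x)\sigma(u_s(y))\,F^{(h)}(\d s\,\d y)$ for $r\in[0,\tau]$, and apply your exponential-martingale/Cauchy--Schwarz argument to $\{N_r\}_{r\le\tau}$; at $r=\tau$ this recovers $u_\tau(x)-(p_\tau*u_0)(x)$, and your deterministic sandwich $v_*\le\langle N\rangle_\tau\le v^*$ goes through verbatim.

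The paper's proof of the lower tail is moment-based: it applies It\^o's formula to $N_r^k$ (for even $k$), bounds the quadratic-variation integrand from below using $\sigma^2\ge L_*^2$ and $f\ge 0$, and iterates the resulting recursion to compare $\E(N_\tau^k)$ with the $k$th moment of an explicit Gaussian variable $\zeta_\tau$. This yields $\inf_x\|u_t(x)\|_k\ge c(\varkappa)\sqrt k$, and the tail bound then comes from a separate moments-to-tails lemma in \cite{CJK}. Your approach bypasses the moment recursion entirely: the two-sided deterministic control of $\langle N\rangle_\tau$ turns the exponential-martingale identity into two-sided bounds on the Laplace transform $\E\,\e^{\xi N_\tau}$, and Cauchy--Schwarz converts this directly into a Gaussian lower tail. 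This is shorter and more self-contained (no external moments-to-tails conversion), and it makes the dependence of $c_1$ on $I_t(\varkappa)$ completely explicit. The paper's route, on the other hand, produces the sharp two-sided moment asymptotic $\|u_t(x)\|_k\asymp\sqrt k$ as an intermediate product, which is of independent interest and is reused elsewhere. For the upper bound the two arguments coincide.
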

 
\begin{proof}
	Choose and fix an arbitrary $\tau>0$, and consider the continuous
	$L^2(\P)$ martingale $\{M_t\}_{t\in[0,\tau]}$ defined by
	\begin{equation}
		M_t := (p_\tau*u_0)(x)+\int_{(0,t)\times\R^d} p_{\tau-s}(y-x)
		\sigma(u_s(y))\, F^{(h)}(\d s\,\d y),
	\end{equation}
	as $t$ ranges within $(0\,,\tau)$.
	By It\^o's formula, for all even integers $k\ge 2$,
	\begin{equation}\label{eq:M:Ito}
		M_t^k = (p_\tau*u_0)(x)^k+k\int_0^t M_s^{k-1}\,\d M_s+
		\binom{k}{2}\int_0^t M_s^{k-2}\,\d\langle M\rangle_s.
	\end{equation}
	The final integral that involves quadratic variation can be written as
	\begin{equation}
		\int_0^t M_s^{k-2}\left[\int_{\R^d}\d y\int_{\R^d}
		\d z\ p_{\tau -s}(y-x)p_{\tau-s}(z-x) f(z-y)\mathcal{Z}\right]\,\d s,
	\end{equation}
	where $\mathcal{Z} := \sigma(u_s(y))\sigma(u_s(z))\ge \epsilon_0^2$ 
	for some $\epsilon_0>0$. This is because $\sigma$ is uniformly 
	bounded away from $0$.
	Thus, the last integral in \eqref{eq:M:Ito} is bounded below by
	\begin{equation}\begin{split}
		&\epsilon_0^2\int_0^tM_s^{k-2}\left[\int_{\R^d}\d y\int_{\R^d}
			\d z\ p_{\tau -s}(y-x)p_{\tau-s}(z-x) f(z-y)\right]\,\d s\\
		&\hskip1.5in
			=\epsilon_0^2\int_0^tM_s^{k-2}\left( p_{\tau-s}\,, p_{\tau-s}*f
			\right)_{L^2(\R^d)}\,\d s,
	\end{split}\end{equation}
	where $\langle a\,,b \rangle_{L^2(\R^d)}:= \int_{\R^d}a(x)b(x)\,\d x$ 
	denotes the usual inner product on $L^2(\R^d)$. This leads us to
	the recursive inequality,
	\begin{equation}
		\E(M_t^k) \ge \left(\inf_{x\in\R^d}u_0(x)\right)^k
		+\binom{k}{2}\epsilon_0^2\cdot
		\int_0^t \E(M_s^{k-2}) \langle p_{\tau-s}\,, p_{\tau-s}*f
		\rangle_{L^2(\R^d)}\,\d s.
	\end{equation}
	
	Next, consider   the Gaussian process $\{\zeta_t\}_{t\ge 0}$ defined by
	\begin{equation}
		\zeta_t := \epsilon_0\int_{(0,t)\times\R^d} p_{\tau-s}(y-x)
		\, F^{(h)}(\d s\,\d y)
		\quad(0<t<\tau).
	\end{equation}
	We may iterate, as was done in \cite[proof of Proposition 3.6]{CJK}, 
	in order to find that
	\begin{equation}
		\E(M_t^k) \ge 
		\E\left(\left[\inf_{x\in\R^d}u_0(x)+\zeta_t\right]^k \right) \ge 
		\E\left(\zeta_t^k\right)\ge \left(
		\textnormal{const} \cdot k\, \E\left[\zeta_t^2
		\right]\right)^{k/2}.
	\end{equation}
	Now $\E(\zeta_t^2) = \epsilon_0^2\int_0^t \langle p_{\tau-s}\,,
	p_{\tau-s}*f\rangle_{L^2(\R^d)}\d s$.
	Since $p_{\tau-s}\in\mathcal{S}$ for all $s\in(0\,,\tau)$,
	Parseval's identity applies, and it follows that
	\begin{equation}
		\langle p_{\tau-s}\,,p_{\tau-s}*f\rangle_{L^2(\R^d)}
		=\frac{1}{(2\pi)^d}\int_{\R^d}\hat f(\xi)
		\e^{-\varkappa(\tau -s)\|\xi\|^2}\,\d\xi.
	\end{equation}
	Therefore,
	\begin{equation}\begin{split}
		\E(\zeta_\tau^2) &=\frac{\epsilon_0^2}{(2\pi)^d}
			\int_{\R^d}\hat f(\xi)\left[ \frac{
			1-\e^{-\varkappa\tau\|\xi\|^2}}{\varkappa
			\|\xi\|^2}\right]\,\d\xi\\
		&\ge  \frac{\epsilon_0^2}{2(2\pi)^d}\int_{\R^d}
			\frac{\hat f(\xi)}{\tau^{-1}+\varkappa\|\xi\|^2}\,\d\xi.
	\end{split}\end{equation}
	This requires only the elementary bound
	$(1-\e^{-z})/z \ge(2(1+z))^{-1}$, valid for all $z>0$.
	Since $M_t=u_t(x)$ when $t=\tau$, it follows that
	\begin{equation}\label{eq:moment:lb}
		c(\varkappa)\sqrt k \le \inf_{x\in\R^d}\|u_t(x)\|_k,
	\end{equation}
	for all $k\ge 2$, where $c(\varkappa)=
	c(t\,,\varkappa\,,f\,,d)$ is positive and finite,
	and has the additional property that
	\begin{equation}\label{cond:c:kappa}
		\inf_{\varkappa\in(0,\varkappa_0)} c(\varkappa) >0\qquad
		\text{for all $\varkappa_0>0$}.
	\end{equation}
	Similar arguments reveal that 
	\begin{equation}\label{eq:moment:ub}
		\sup_{x\in\R^d}\|u_t(x)\|_k\le c'\sqrt{k},
	\end{equation}
	for all $k\ge 2$, where $c'$ is a positive
	and finite constant that depends only on $(t\,,f\,,d)$. 
	The result follows from
	the preceding two moment estimates (see \cite{CJK} for details).
\end{proof}

\begin{lemma}\label{tail_bound0}
	Let $u$ denote the solution to \textnormal{(SHE)}, where $\sigma$ is assumed
	to be bounded uniformly away from zero
	and $\inf_{x\in\R^d}u_0(x)>0$. If $f=h*\tilde{h}$
	for some nonnegative $h\in L^2(\R^d)$, then for all $t>0$
	there exists a positive and finite constant $a(\varkappa):=
	a(\varkappa\,,t\,,d\,,f)$ such that uniformly for every
	$\lambda>\e$,
	\begin{equation} \label{prop1}
		\P\{|u_t(x)|\ge \lambda\} \ge 
		\frac{\exp\left(-a(\varkappa)\lambda^4 \right)}{\sqrt{a(\varkappa)}}.
	\end{equation}
	Furthermore, $\sup_{\varkappa\in(0,\varkappa_0)}a(\varkappa)<\infty$ for all $\varkappa_0>0$.
\end{lemma}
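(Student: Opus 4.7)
The plan is to prove the lower tail bound via a Paley--Zygmund (second-moment) argument applied to $|u_t(x)|^k$ for an integer $k$ chosen depending on $\lambda$. Neither of the two required moment inputs uses an upper bound on $\sigma$. First, the \emph{lower} $k$-th moment bound: running the It\^o/iteration argument from the proof of Lemma \ref{tail_bound} verbatim uses only that $\sigma \ge \epsilon_0 > 0$ and $\inf_x u_0(x)>0$, so it gives
\[
	\inf_{x\in\R^d}\|u_t(x)\|_k \ge c(\varkappa)\sqrt{k}\qquad\text{for all }k\ge 2,
\]
with $c(\varkappa) := c(t,\varkappa,f,d)>0$ satisfying $\inf_{\varkappa\in(0,\varkappa_0)}c(\varkappa)>0$ for every $\varkappa_0>0$; cf.\ \eqref{eq:moment:lb} and \eqref{cond:c:kappa}. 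Second, the \emph{upper} $k$-th moment bound: since $\sigma$ is globally Lipschitz (and $f(0)<\infty$), inequality \eqref{mom:upbd} yields
\[
	\sup_{x\in\R^d}\|u_t(x)\|_k \le C\sqrt{k}\,\e^{Ckt}\qquad\text{for all }k\ge 2,
\]
where $C$ depends on $f(0),|\sigma(0)|,\lip,\|u_0\|_{L^\infty(\R^d)}$ but is independent of $\varkappa$.

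Next, apply the Paley--Zygmund inequality $\P(X>\tfrac{1}{2}\E X)\ge(\E X)^2/(4\,\E X^2)$ to the nonnegative random variable $X:=|u_t(x)|^k$. This yields
\[
	\P\!\left\{|u_t(x)|\ge \bigl(\tfrac{1}{2}\E|u_t(x)|^k\bigr)^{1/k}\right\}
	\;\ge\;\frac{(\E|u_t(x)|^k)^2}{4\,\E|u_t(x)|^{2k}}.
\]
Insert the two moment bounds: the threshold $(\tfrac{1}{2}\E|u_t(x)|^k)^{1/k}$ dominates $c(\varkappa)\sqrt{k}/\sqrt{2}$, and the right-hand side is at least $\tfrac{1}{4}\bigl(c(\varkappa)^2/(2C^2)\bigr)^{k}\exp(-4Ctk^2)$. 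Now, given any $\lambda>\e$, choose $k := \lceil 2\lambda^2/c(\varkappa)^2\rceil\vee 2$, so that the threshold exceeds $\lambda$. With this choice $k = O(\lambda^2/c(\varkappa)^2)$, so the dominant term in the exponent becomes $-4Ctk^2 \asymp -C't\lambda^4/c(\varkappa)^4$, while the linear-in-$k$ term $k\log(c(\varkappa)^2/(2C^2))$ is of order $\lambda^2$ and is absorbed for $\lambda>\e$. Consolidating gives $\P\{|u_t(x)|\ge\lambda\}\ge \exp(-a(\varkappa)\lambda^4)/\sqrt{a(\varkappa)}$ with $a(\varkappa)=O(1/c(\varkappa)^4)$.

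The main technical subtlety is bookkeeping the dependence on $\varkappa$. One must verify that the constant $C$ in the upper moment bound \eqref{mom:upbd} really does not involve $\varkappa$ (it doesn't, because $\int p_{t-s}(y-x)\,\d y=1$ irrespective of $\varkappa$ and $f(y-z)\le f(0)$ uses no heat-kernel scaling), and that $c(\varkappa)^{-1}$ stays bounded as $\varkappa\downarrow 0$ (which is the content of \eqref{cond:c:kappa}: for $\varkappa\le\varkappa_0$ one has $(\tau^{-1}+\varkappa\|\xi\|^2)^{-1}\ge(\tau^{-1}+\varkappa_0\|\xi\|^2)^{-1}$, so $c(\varkappa)\ge c(\varkappa_0)$). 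These two facts together ensure $\sup_{\varkappa\in(0,\varkappa_0)}a(\varkappa)<\infty$, as required. Extending the bound from $\lambda>\e$ down to moderate $\lambda$ is automatic by enlarging $a(\varkappa)$ so that $\exp(-a(\varkappa)\e^4)/\sqrt{a(\varkappa)}$ is dominated by, say, $\P\{|u_t(x)|\ge 1\}$.
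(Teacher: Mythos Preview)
Your proposal is correct and follows essentially the same approach as the paper: combine the lower moment bound \eqref{eq:moment:lb} (which uses only $\sigma\ge\epsilon_0$) with the $\varkappa$-free upper moment bound \eqref{mom:upbd}, feed both into the Paley--Zygmund inequality, and then match $k$ to $\lambda$ via $k\asymp \lambda^2/c(\varkappa)^2$. The paper writes the Paley--Zygmund step with the pair of exponents $(2k,4k)$ rather than your $(k,2k)$, but this is only a relabeling; the resulting estimate and the bookkeeping of the $\varkappa$-dependence are the same.
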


\begin{proof}
	The proof of this proposition is similar to the proof of 
	Proposition 3.7 in the companion paper \cite{CJK},
	and uses the following elementary fact [called the
	``Paley--Zygmund inequality'']: If $Z\in L^2(\P)$
	is nonnegative and $\epsilon\in(0\,,1)$, then 
	\begin{equation}\label{eq:PZ}
		\P\left\{ Z >(1-\epsilon) \E Z\right\} \ge \frac{(\epsilon \E Z)^2}{
		\E(Z^2)}.
	\end{equation}
	This is a ready consequence of the Cauchy--Schwarz inequality.
	
	Note, first, that the moment bound \eqref{eq:moment:lb}
	continues to hold for a constant $c(\varkappa)=c(t\,,\varkappa\,,f\,,d)$
	that satisfies \eqref{cond:c:kappa}. We can no longer apply
	\eqref{eq:moment:ub}, however, since that inequality used
	the condition that $\sigma$ is bounded above; a property that
	need not hold in the present setting. Fortunately, the
	general estimate \eqref{mom:upbd} is valid with ``const''
	not depending on $\varkappa$. Therefore,
	we appeal to the Paley--Zygmund inequality \eqref{eq:PZ} to see that
	\begin{equation}
		\P\left\{ |u_t(x)|\ge \frac{1}{2}\|u_t(x)\|_{2k} \right\} 
		\ge \frac{\left[\E\left(|u_t(x)|^{2k}\right)
		\right]^2}{4\E\left(|u_t(x)|^{4k}\right)}
		\ge \textnormal{const}\cdot [c(\varkappa)]^2 \e^{-Ck^2},
	\end{equation}
	as $k \to \infty$, where $C\in(0\,,\infty)$ does not depend 
	on $(k\,,\varkappa)$. 
	Since $\|u_t(x)\|_{2k}\ge c(\varkappa)\cdot\sqrt{2k}$, it follows that
	$\P \{ |u_t(x)|\ge c(\varkappa)\cdot\sqrt{k/2} \} 
	\ge \exp  (-C'k^2)$
	as $k \to \infty$ for some $C'$ which depends only on $t$. We obtain the proposition by considering $\lambda$ between
	$c(\varkappa)\cdot\sqrt{k/2}$ and $c(\varkappa)\cdot\sqrt{(k+1)/2}$.
\end{proof}

\subsection{A lower bound for the parabolic Anderson model for
	$h\in L^2(\R^d)$}

Throughout this subsection we consider $u$ to be the solution to
the parabolic Anderson model \textnormal{(PAM)} in the case that 
$\inf_{x\in\R^d}u_0(x)>0$.

\begin{proposition}\label{pr:pa_mombd}
	There exists a constant $\Lambda_d\in(0\,,\infty)$---depending
	only on $d$---such that for all $t,\varkappa>0$ and $k\ge 2$,
	\begin{equation}
		\Big[\inf_{x\in \R^d}u_0(x)\Big]^k\e^{\Lambda_d a_t k^2}\le \E\left( |u_t(x)|^k\right) \le \Big[\sup_{x\in \R^d}u_0(x)\Big]^k \e^{tf(0)k^2},
	\end{equation}
	where $a_t = a_t(f\,,\varkappa)>0$ for all $t,\varkappa>0$, and is defined by
	\begin{equation}\label{eq:A_t:lb}
		a_t := \sup_{\delta>0}\left[ \frac{\delta^2}{4\varkappa}
		\left( 1\wedge \frac{4\varkappa t}{\delta^2}\right)
		\inf_{x\in B(0,\delta)}f(x)
		\right].
	\end{equation}
\end{proposition}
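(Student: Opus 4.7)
The plan is to exploit the Feynman--Kac moment representation for the linear equation \textnormal{(PAM)}. For each integer $k\ge 1$, the mixed moment $v_k(t\,,x_1,\ldots,x_k):=\E\prod_{i=1}^k u_t(x_i)$ satisfies the deterministic linear PDE $\partial_t v_k=(\varkappa/2)\sum_i\Delta_{x_i}v_k+\bigl(\sum_{i<j}f(x_i-x_j)\bigr)v_k$ with initial condition $\prod_i u_0(x_i)$, as one obtains by applying It\^o's formula to the product of $k$ copies of the mild equation. The Feynman--Kac formula then gives
\begin{equation*}
v_k(t\,,x_1,\ldots,x_k)=\E_X\!\left[\prod_{i=1}^k u_0(x_i+X_t^i)\,\exp\!\left(\sum_{1\le i<j\le k}\int_0^t f(x_i-x_j+X_s^i-X_s^j)\,\d s\right)\right],
\end{equation*}
where $X^1,\ldots,X^k$ are independent Brownian motions on $\R^d$ with generator $(\varkappa/2)\Delta$. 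Setting $x_1=\cdots=x_k=x$ yields the formula for $\E u_t(x)^k$ that drives the entire argument.

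For the upper bound I would substitute $u_0\le\overline{u}_0$ and $f(\cdot)\le f(0)$ in the representation and use $\binom{k}{2}\le k^2$ to read off $\E u_t(x)^k\le \overline{u}_0^{\,k}\e^{tf(0)k^2}$. For the lower bound I would replace $u_0$ by $\underline{u}_0$ and push the expectation inside the exponential by Jensen's inequality:
\begin{equation*}
\E u_t(x)^k\ge \underline{u}_0^{\,k}\exp\!\left(\binom{k}{2}\int_0^t \E f(X_s^1-X_s^2)\,\d s\right).
\end{equation*}
Since $X_s^1-X_s^2$ is centred Gaussian with covariance $2\varkappa s\, I_d$, its density is $p_{2s}$, so $\E f(X_s^1-X_s^2)=(p_{2s}*f)(0)$. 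For any $\delta>0$ I would restrict this convolution to the ball $B(0\,,\delta)$ to get $(p_{2s}*f)(0)\ge \inf_{B(0,\delta)}f\cdot\P\{\|X_s^1-X_s^2\|\le\delta\}$, and a standard Gaussian calculation bounds the latter probability below by a dimensional constant $c_d>0$ whenever $s\le\delta^2/(2\varkappa)$. A short manipulation then gives $\int_0^t(p_{2s}*f)(0)\,\d s\ge c_d\cdot(\delta^2/(4\varkappa))(1\wedge 4\varkappa t/\delta^2)\inf_{B(0,\delta)}f$; taking the supremum over $\delta>0$ produces $\ge c_d\,a_t$. Combined with $\binom{k}{2}\ge k^2/4$ for $k\ge 2$, the exponent is at least $(c_d/4)\,a_t k^2$, giving the lower bound with $\Lambda_d:=c_d/4$.

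The only real obstacle is rigorously justifying the moment PDE---and hence the Feynman--Kac representation---within the Walsh--Dalang framework. Because $h\in L^2(\R^d)$, the noise $F^{(h)}$ is a bona fide martingale measure and the PDE for $v_k$ can be established by induction on $k$: on each Picard iterate the identity follows from It\^o's isometry applied termwise to the Wiener chaos expansion, and one passes to the limit using the uniform $L^2$ estimates available along the way. Granted the representation, the remaining manipulations are elementary.
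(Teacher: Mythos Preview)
Your proposal is correct and follows essentially the same route as the paper: both arguments rest on the Feynman--Kac moment formula (which the paper imports as Lemma~\ref{lem:DC} from Conus \cite{Conus} rather than re-deriving), bound $f$ above by $f(0)$ for the upper estimate, and combine Jensen's inequality with a Gaussian small-ball probability for the lower estimate. Your lower-bound computation is in fact slightly cleaner than the paper's---you work directly with $(p_{2s}*f)(0)\ge \inf_{B(0,\delta)}f\cdot\P\{\|X^1_s-X^2_s\|\le\delta\}$, whereas the paper first decomposes the indicator $\mathbf{1}_{B(0,\delta)}(b^{(i)}_r-b^{(j)}_r)\ge \mathbf{1}_{B(0,\delta/2)}(b^{(i)}_r)\mathbf{1}_{B(0,\delta/2)}(b^{(j)}_r)$ and only then applies Jensen, leading to a squared one-particle probability; both yield the same $a_t$ up to the dimensional constant $\Lambda_d$.
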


This proves, in particular, that the  exponent  estimate
$\left(1\vee 16f(0)\text{Lip}_{\sigma}^2\right)k^2t$, derived more
generally in \eqref{mom:upbd}, is sharp---up to a constant---as a function of $k$.

The proof of Proposition \ref{pr:pa_mombd} hinges on the following,
which by itself is a ready consequence of a
moment formula of Conus \cite{Conus}; see also \cite{BC,HuNualart} 
for related results and special cases.

\begin{lemma}[\cite{Conus}]\label{lem:DC}
	For all $t>0$,  and $x\in\R^d$, we have the following inequalities
	\begin{equation}\begin{split}
		& \E\left( | u_t(x)|^k\right)\ge\left[\inf_{x\in\R^d}u_0(x)\right]^k
			\cdot  \E\exp\left(
			\mathop{\sum\sum}\limits_{1\le i\ne j\le k}\int_0^t f
			\left( \sqrt\varkappa\left[ b^{(i)}_r - b^{(j)}_r \right]\right)\,\d r
			\right) ,\\
		&\E\left( | u_t(x)|^k\right)\le \left[\sup_{x\in\R^d}u_0(x)\right]^k
			\cdot  \E\exp\left(
			\mathop{\sum\sum}\limits_{1\le i\ne j\le k}\int_0^t f
			\left( \sqrt\varkappa\left[ b^{(i)}_r - b^{(j)}_r \right]\right)\,\d r
			\right),
	\end{split}\end{equation}
	where $b^{(1)},b^{(2)},\ldots$ denote independent 
	standard Brownian motions in $\R^d$. 
\end{lemma}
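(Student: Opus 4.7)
The plan is to establish the moment formula by deriving a closed parabolic PDE for the $k$-point correlation function and then applying the classical Feynman--Kac representation. Fix $k\ge 2$ and $x_1,\ldots,x_k\in\R^d$, and set $v_k(t,x_1,\ldots,x_k):=\E[u_t(x_1)\cdots u_t(x_k)]$. Expanding the product $u_t(x_1)\cdots u_t(x_k)$ via the mild form \eqref{mild} with $\sigma(u)=u$, taking expectations so that the pure martingale terms vanish, and iteratively applying the Walsh--Dalang isometry \eqref{eq:isometry} to the quadratic-variation terms, one obtains a closed integral equation for $v_k$ whose unique solution solves
\be
\partial_t v_k=\frac{\varkappa}{2}\sum_{\ell=1}^k\Delta_{x_\ell}v_k+V(x_1,\ldots,x_k)\,v_k,\qquad v_k(0,\cdot)=\prod_\ell u_0,
\ee
with the pair-interaction potential $V(y_1,\ldots,y_k):=\sum_{1\le i<j\le k}f(y_i-y_j)$. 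The case $k=2$ is immediate from \eqref{eq:isometry}; the general case is an induction whose stochastic-Fubini steps are justified by the a-priori $k$-th moment bound \eqref{mom:upbd}.

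Feynman--Kac then converts this PDE into an expectation. Since $(\varkappa/2)\sum_\ell\Delta_{x_\ell}$ generates $k$ independent Brownian motions $\sqrt\varkappa b^{(\ell)}_r$ started at $x_\ell$,
\be
v_k(t,x_1,\ldots,x_k)=\E\!\left[\prod_\ell u_0\!\bigl(x_\ell+\sqrt\varkappa b^{(\ell)}_t\bigr)\exp\!\left(\sum_{i<j}\int_0^t\! f\bigl(x_i-x_j+\sqrt\varkappa(b^{(i)}_r-b^{(j)}_r)\bigr)\,\d r\right)\right].
\ee
Specialize $x_1=\cdots=x_k=x$ and rewrite $\sum_{i<j}=\tfrac12\sum\sum_{i\ne j}$, so the exponent agrees with the one displayed in the lemma. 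The two claimed inequalities then follow by sandwiching each factor $u_0(x+\sqrt\varkappa b^{(\ell)}_t)$ between $\inf_z u_0(z)$ and $\sup_z u_0(z)$ and pulling those constants out of the expectation.

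The principal technical hurdle is rigorously justifying the moment PDE for $k\ge 3$, which requires uniform-in-space control of $\E[u_s(y_1)\cdots u_s(y_k)]$ to legitimize the iterated stochastic-Fubini argument (supplied by \eqref{mom:upbd}), followed by justifying the Feynman--Kac representation, which needs $\E\exp\!\int_0^t V\,\d r<\infty$ along the $k$ Brownian trajectories. Both requirements are met under Dalang's condition \eqref{cond:Dalang}: for the Feynman--Kac step, a Khas'minskii-type estimate reduces the matter to integrability of $r\mapsto f(\sqrt\varkappa(b^{(i)}_r-b^{(j)}_r))$, which is a direct consequence of \eqref{cond:Dalang}. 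An alternative, more combinatorial route---closer to Conus's \cite{Conus} original derivation---is to expand $u_t(x)=\sum_n I_n(g_n(\,\cdot\,;t,x))$ in Wiener chaos and compute $\E[u_t(x)^k]$ via the diagram formula for products of $k$ multiple Wiener--It\^o integrals; the combinatorics of pairings reconstructs the Taylor expansion of the exponential functional without passing through the PDE.
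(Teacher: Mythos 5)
The paper offers no proof of this lemma: it is quoted directly from \cite{Conus} (see the sentence preceding the statement), so the only comparison available is with the standard derivation, which your moment-PDE/Feynman--Kac route --- and the chaos-expansion alternative you mention at the end --- essentially reproduces. Your overall strategy is the right one, and the two technical caveats you flag (closing the moment equation for $k\ge 3$, and the Khas'minskii estimate for the exponential functional under \eqref{cond:Dalang}) are exactly where the real work lies. Be aware, though, that the closure step is asserted rather than carried out: for $k\ge 3$ the It\^o cross-variation terms involve products of the auxiliary martingales at intermediate times, not $v_k(s,\cdot)$ itself, so ``iteratively applying \eqref{eq:isometry}'' does not by itself produce a closed equation; this is precisely the content of the cited reference (handled there by approximation/chaos-expansion arguments).

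The one concrete error is the matching of exponents at the end. Your Feynman--Kac potential is $V=\sum_{1\le i<j\le k}f(y_i-y_j)$, so after setting $x_1=\cdots=x_k=x$ the exponent you obtain is
\begin{equation*}
\sum_{1\le i<j\le k}\int_0^t f\bigl(\sqrt\varkappa\,[b^{(i)}_r-b^{(j)}_r]\bigr)\,\d r
=\frac12\mathop{\sum\sum}\limits_{1\le i\ne j\le k}\int_0^t f\bigl(\sqrt\varkappa\,[b^{(i)}_r-b^{(j)}_r]\bigr)\,\d r,
\end{equation*}
which is \emph{half} the exponent displayed in the lemma, not equal to it. Since $f\ge 0$, your formula still implies the stated upper bound (enlarging a nonnegative exponent only helps), but for the lower bound it yields only the version with the exponent halved. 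The factor $\tfrac12$ is genuinely there: test with spatially flat noise, $f\equiv f(0)$ and $u_0\equiv 1$, where $u_t(x)=\exp(\sqrt{f(0)}\,B_t-\tfrac12 f(0)t)$ and hence $\E(|u_t(x)|^k)=\e^{k(k-1)f(0)t/2}$, matching $\sum_{i<j}$ and not $\sum_{i\ne j}$. So the displayed lower bound cannot be reached by this (or any correct) argument; your proof establishes the lemma with $\sum_{i<j}$ (equivalently $\tfrac12\sum_{i\ne j}$) in the exponent. This discrepancy is harmless downstream --- Propositions \ref{pr:pa_mombd} and \ref{pr:pa_mom_Riesz} use the exponent only up to constant factors, replacing $k(k-1)$ by $k(k-1)/2\ge k^2/4$ throughout --- but you should either carry the $\tfrac12$ explicitly or state clearly that you are proving the corrected form rather than the display as printed.
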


\begin{proof}[Proof of Proposition \ref{pr:pa_mombd}]
	The upper bound for $\E(|u_t(x)|^k)$ follows readily
	from Lemma \ref{lem:DC} 
	and the basic fact that $f$ is maximized at the origin.
	
	In order to establish the lower bound recall that $f$ is continuous
	and $f(0)>0$.
	Because $f(x)\ge q\1_{B(0,\delta)}(x)$
	for all $\delta>0$, with $q=q(\delta):=\inf_{x\in B(0,\delta)}f(x)$,
	it follows that if $b^{(1)},\ldots,b^{(k)}$ are independent
	$d$-dimensional Brownian motions, then
	\begin{equation}\begin{split}
		&\mathop{\sum\sum}\limits_{1\le i\ne j\le k}\int_0^t f
			\left(\sqrt\varkappa\left[b^{(i)}_r - b^{(j)}_r \right]\right)\,\d r\\
		&\hskip1in\ge q\mathop{\sum\sum}\limits_{1\le i\ne j\le k} \int_0^t
			\1_{B(0,\delta/\sqrt\varkappa)}\left(b^{(i)}_r - b^{(j)}_r \right)\,\d r\\
		&\hskip1in\ge q\mathop{\sum\sum}\limits_{1\le \ne j\le k} \int_0^t
			\1_{B(0,\delta/(2\sqrt\varkappa))}(b^{(i)}_r)
			\1_{B(0,\delta/2\sqrt\varkappa)}(b^{(j)}_r)\,\d r.
	\end{split}\end{equation}
	Recall Jensen's inequality,
	\begin{equation}\label{eq:Jensen}
		\E(\e^Z)\ge\e^{\E Z},
	\end{equation}
	valid for all nonnegative random variables $Z$.
	Because of \eqref{eq:Jensen}, Lemma \ref{lem:DC} and the preceding,
	we can conclude that
	\begin{equation}\begin{split}
		\E\left(|u_t(x)|^k\right) &\ge I^k\cdot \E\exp\left(
			q\mathop{\sum\sum}\limits_{1\le i\ne j\le k} \int_0^t
			\1_{B(0,\delta/(2\sqrt\varkappa))}(b^{(i)}_r)
			\1_{B(0,\delta/2\sqrt\varkappa)}(b^{(j)}_r)\,\d r\right)\\
		&= I^k\cdot \exp\left( qk(k-1)\cdot\int_0^t \left[ G\left(
			\frac{\delta}{2\sqrt\varkappa\sqrt{r}}\right)\right]^2
			\,\d r\right),
	\end{split}\end{equation}
	where $I:=\inf u_0$ and
	$G(z) := (2\pi)^{-d/2}\int_{\|x\|\le z} \e^{-\|x\|^2/2}
	\,\d x$ for all $z>0$.
	Because $k(k-1)\ge k^2/4$ for all $k\ge 2$, and we find that
	$\E ( | u_t(x)|^k )\ge I^k\cdot\exp (A_\delta k^2  )$,
	where $A_\delta$ is defined as
	\begin{equation}
		\frac{q}{4}
		\int_0^t\left[ G\left(\frac{\delta}{2\sqrt\varkappa\sqrt r}\right)\right]^2
		\,\d r=\inf_{x\in B(0,\delta)}f(x)\cdot
		\int_0^t\left[ \frac12G\left(
		\frac{\delta}{2\sqrt\varkappa\sqrt r}\right)\right]^2
		\,\d r.
	\end{equation}
	Finally, we observe that
	\begin{equation}
		0< \tilde{\Lambda}_d:= 
		\inf_{z>0}\left[\frac{\frac12G(z)}{1\wedge z^d}\right]^{\nicefrac12}<\infty.
	\end{equation}
	A few lines of computation yield the bound, $\sup_{\delta>0} A_{\delta} 
	\ge \tilde{\Lambda}_d a_t$. The lemma follows from this by readjusting
	and relabeling the constants.
\end{proof}

\section{Localization when $h\in L^2(\R^d)$ satisfies 
	\eqref{eq:h:a}}\label{local}
	
Throughout this section we assume that $h\in L^2(\R^d)$ is
nonnegative and satisfies  condition \eqref{eq:h:a}. Moreover, we let $u$ denote the solution to \textnormal{(SHE)}.
	
In order to simplify the notation we define, 
for every $x := (x_1,x_2,\ldots, x_d)\in \mathbf{R}^d$ 
and $a\in \mathbf{R}_{+}$,  
\begin{equation}
	[x-a\,,x+a] := [x_1-a\,,x_1+a]\times\cdots\times[x_d-a\,,x_d+a].
\end{equation}
That is, $[x-a\,,x+a]$ denotes the $\ell^\infty$ ball of radius $a$ around $x$.

Given an arbitrary $\beta>0$, 
define $U^{(\beta)}$ to be the solution to the random integral equation
\begin{align}\label{eq:U_beta}
	&U^{(\beta)}_t(x)  \\\notag
	&	= (p_t*u_0)(x)+\int_{(0,t)\times 
		[x-\beta\sqrt{t},x+\beta\sqrt{t}]} p_{t-s}(y-x)\sigma\left( 
		U^{(\beta)}_s(y) \right) F^{(h_{\beta})}(\d s\,\d y),
\end{align}
where $h_\beta$ is defined in \eqref{rho}. A comparison with
the mild form \eqref{mild} of the solution to \textnormal{(SHE)} shows
that $U^{(\beta)}$ is a kind of ``localized'' version of $u$.
Our goal is to prove that if $\beta$ is sufficiently large, then
$U^{(\beta)}_t(x)\approx u_t(x)$.

The method of
Dalang \cite{Dalang:99} can be used to prove that the predictable random
field $U^{(\beta)}$ exists, is unique up to a modification, and
satisfies the estimate
$\sup_{t\in[0,T]}\sup_{x\in\R^d}\E( |U^{(\beta)}_t(x)|^k)
<\infty$ for every $T>0$ and $k\ge 2$. Furthermore, the method of Foondun
and Khoshnevisan \cite{FK} shows that, in fact $U^{(\beta)}$ satisfies
a similar bound as does $u$ in \eqref{mom:upbd}. Namely,
 there exists a constant $D_1\in(0\,,\infty)$---depending on $\sigma$
 and $t$---such that for all $t>0$
 and $k\ge 2$,
\begin{equation}\label{mom:U_beta}
	\sup_{\beta>0}\sup_{x\in\R^d}
	\E\left( |U^{(\beta)}_t(x)|^k\right) \le D_1 \e^{D_1k^2t}.
\end{equation}
We skip the details of the proofs of these facts, as they require only
simple modifications to the methods of \cite{Dalang:99,FK}.

\begin{remark}\label{rem:NoKappa}
	We emphasize that $D_1$ depends only on $(t\,,f(0)\,,d,\sigma)$.
	In particular, it can be chosen to be independent of
	$\varkappa$. In fact, $D_1$ has exactly the same
	parameter dependencies as the upper bound for the
	moment estimate in \eqref{mom:upbd}; and the
	two assertions holds for very much the same reasons.\qed
\end{remark}

\begin{lemma}\label{lem:localization}
	For every $T>0$ there exists finite and positive constants 
	$G_{*}$ and $F_{*}$---depending only on 
	$(T\,,f(0)\,,d\,, \varkappa\,,b\,,\sigma)$---such 
	that for sufficiently large $\beta>0$ and $k\ge 1$,
	\begin{equation}
		\sup_{t\in [0,T]}\sup_{x \in \mathbf{R}^d} 
		\E\left(\left|u_t(x)-U_t^{(\beta)}(x)\right|^k\right) \le 
		\frac{G_*^k k^{k/2}\exp(F_* k^2 )}{\beta^{kb/2}},
	\end{equation}
	where $b\in(0\,,2)$ was introduced in Lemma \ref{lem:Omega}.
\end{lemma}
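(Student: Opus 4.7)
\medskip
\noindent\emph{Proof plan.} The plan is to write $u_t(x)-U^{(\beta)}_t(x)=A_t(x)+B_t(x)+C_t(x)$ by inserting two intermediate stochastic integrals. With $R:=[x-\beta\sqrt{t},x+\beta\sqrt t]$ and $d_s(y):=u_s(y)-U^{(\beta)}_s(y)$, I set
\begin{align*}
	A_t(x)&:=\int_{(0,t)\times\R^d} p_{t-s}(y-x)\sigma(u_s(y))\,[F^{(h)}-F^{(h_\beta)}](\d s\,\d y),\\
	B_t(x)&:=\int_{(0,t)\times R^c} p_{t-s}(y-x)\sigma(u_s(y))\, F^{(h_\beta)}(\d s\,\d y),\\
	C_t(x)&:=\int_{(0,t)\times R} p_{t-s}(y-x)\,[\sigma(u_s(y))-\sigma(U^{(\beta)}_s(y))]\, F^{(h_\beta)}(\d s\,\d y).
\end{align*}
These three terms correspond, respectively, to replacing $F^{(h)}$ by its frequency-truncated companion $F^{(h_\beta)}$, to the heat-kernel tail outside the localization ball, and to the propagation of the localization error through the Lipschitz nonlinearity.

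\medskip
For the first piece I would apply Proposition \ref{pr:mk} with $Z:=\sigma(u)$ and $n:=\lfloor\beta\rfloor$, which yields $\mathcal{M}_\delta^{(k)}(A)\le C\sqrt{k/\beta^{b}}\,\mathcal{M}_\delta^{(k)}(\sigma(u))$. The Lipschitz bound on $\sigma$ together with \eqref{mom:upbd} (taking $\delta=\delta_k$ linear in $k$ as in the proof of Proposition \ref{pr:ub:general}) gives $\mathcal{M}_{\delta_k}^{(k)}(\sigma(u))\le\textnormal{const}\cdot\sqrt k$, and hence $\|A_t(x)\|_k\le\textnormal{const}\cdot k\,\e^{\delta_k T}/\beta^{b/2}$. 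For the second piece I would repeat the BDG + Minkowski + Cauchy--Schwarz recipe from the proof of Proposition \ref{pr:mk}, use $f^{(h_\beta)}=h_\beta*\tilde h_\beta\le f(0)$ (since $0\le h_\beta\le h$ pointwise), and apply the Gaussian tail estimate
\begin{equation*}
	\int_{R^c} p_{t-s}(y-x)\,\d y \le 2d\exp\!\left(-\frac{\beta^2 t}{2\varkappa(t-s)}\right)\le 2d\exp\!\left(-\frac{\beta^2}{2\varkappa}\right),
\end{equation*}
valid for $0<s<t$. Combined with \eqref{mom:upbd} this delivers $\|B_t(x)\|_k\le\textnormal{const}\cdot\sqrt k\,\e^{\delta_k T}\exp(-\beta^2/(4\varkappa))$, which is at most $\textnormal{const}\cdot\sqrt k\,\e^{\delta_k T}/\beta^{b/2}$ once $\beta$ is large enough to absorb the exponential into a polynomial.

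\medskip
The third piece is where the argument becomes self-referential. The same BDG + Minkowski + Cauchy--Schwarz recipe, now using $|\sigma(u_s(y))-\sigma(U^{(\beta)}_s(y))|\le\lip|d_s(y)|$, the estimate $f^{(h_\beta)}\le f(0)$, and the trivial bound $\int_R p_{t-s}(y-x)\,\d y\le 1$, leads to
\begin{equation*}
	\|C_t(x)\|_k^2\le 4k\lip^2 f(0)\int_0^t M_k(s)^2\,\d s,\qquad M_k(s):=\sup_{y\in\R^d}\|d_s(y)\|_k.
\end{equation*}
Because the bounds on $A$ and $B$ were uniform in $x$, squaring the triangle inequality and combining yields a Gronwall-type inequality for $M_k^2$ whose solution is $M_k(t)\le \textnormal{const}\cdot k\,\e^{C'kT}/\beta^{b/2}$ for $t\in[0,T]$. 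Raising to the $k$-th power and using $k^k\le k^{k/2}\e^{k^2}$ to split the excess $\sqrt k$ factor into $k^{k/2}\cdot\e^{F_* k^2}$ gives the stated bound, with $G_*$ and $F_*$ depending only on the parameters listed.

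\medskip
The main technical obstacle, as I see it, is not any one of the three estimates in isolation but the simultaneous juggling of the constants involved: the growth parameter $\delta_k\sim k$ that is forced by the recursive nature of the moment bound \eqref{mom:upbd}, the Gronwall blow-up induced by the $C$-piece, and the requirement that the final constant $G_*$ be independent of $\beta$ so that the $\beta^{-bk/2}$ decay survives intact. One must also verify en route that condition \eqref{cond:PRF} holds for each of the three integrands, which follows from the $L^k$-moment bounds on $u$ and on $U^{(\beta)}$ (the latter being \eqref{mom:U_beta}).
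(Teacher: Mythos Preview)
Your proof is correct and takes essentially the same approach as the paper: a three-term decomposition isolating the noise-truncation error, the spatial-cutoff error, and the Lipschitz-propagated error, then closing via Gronwall. The only cosmetic difference is the ordering---your $A$, $B$, $C$ carry $\sigma(u)$ in the first two pieces, whereas the paper's intermediates $V^{(\beta)}$ and $Y^{(\beta)}$ carry $\sigma(U^{(\beta)})$ there (so the paper feeds in \eqref{mom:U_beta} instead of \eqref{mom:upbd})---but the estimates and the Gronwall closure are identical.
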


\begin{proof}
	By the triangle inequality,
	\begin{align}\label{u:decompose}
		&\left\| u_t(x)-U_t^{(\beta)}(x) \right\|_k\\\notag
		&\le
			\left\|u_t(x)-V_t^{(\beta)}(x) \right\|_k
			+\left\|V_t^{(\beta)}(x)-Y_t^{(\beta)}(x) \right\|_k
			+\left\|Y_t^{(\beta)}(x)-U_t^{(\beta)}(x) \right\|_k,
	\end{align}
	where
	\begin{equation}\label{eq:V_beta}
		V_t^{(\beta)}(x) := (p_t*u_0)(x)+\int_{(0,t)\times \R^d} p_{t-s}(y-x)
		\sigma\left(U_s^{(\beta)}(y)\right)F^{(h)}(\d s\,\d y),
	\end{equation}
	and
	\begin{equation}\label{eq:Y_beta}
		Y_t^{(\beta)}(x) := (p_t*u_0)(x)+\int_{(0,t)\times \R^d} p_{t-s}(y-x)
		\sigma\left( U_s^{(\beta)}(y)\right) F^{(h_{\beta})}(\d s\,\d y).
	\end{equation}
	In accord with \eqref{eq3.24} and \eqref{mom:U_beta},
	\begin{align}\begin{split}\label{eq:V-Y}
			\|V^{(\beta)}-Y^{(\beta)}\|_k\le \text{const}\cdot \sqrt{\frac{kt}{\beta^b}}\e^{D_1tk}
	\end{split}\end{align}
	where we remind that $D_1$ is a constant that does not depend on $\varkappa$. 
	Next we bound the quantity $\|Y^{(\beta)}-U^{(\beta)}\|_k$,
	using the  Burkholder--Davis--Gundy inequality,
	\textnormal{(BDG)} and obtain the following:
	\begin{equation}\begin{split}
		&\left\|Y_t^{(\beta)}(x)-U_t^{(\beta)}(x) \right\|_k\\
		&=\left\| \int_{(0,t)\times [x-\beta\sqrt{t},x+\beta\sqrt{t}]^c} p_{t-s}(y-x)
			\sigma\left( U^{(\beta)}_s(y) \right) F^{(h_{\beta})}(\d s\,\d y)
			\right\|_k\\
		&\le \textnormal{const}\cdot
			\sqrt{kf(0)} \left(  \int_0^t\d s
			\int_{[x-\beta\sqrt{t},x+\beta\sqrt{t}]^c}\d y
			\int_{[x-\beta\sqrt{t},x+\beta\sqrt{t}]^c}\d z\ \mathcal{W}\right)^{\nicefrac12},
	\end{split}\end{equation}
	where
	\begin{equation}
		\mathcal{W} := 
		p_{t-s}(y-x)p_{t-s}(z-x)
		\left( 1+\left\| U^{(\beta)}_s(y)\right\|_k\right)
		\left(1+\left\| U^{(\beta)}_s(z)\right\|_k\right). 
	\end{equation}
	Therefore, \eqref{mom:U_beta} implies that
	\begin{equation}
		\left\|Y_t^{(\beta)}(x)-U_t^{(\beta)}(x) \right\|_k
		\le  D_2\e^{D_2tk} \sqrt{kf(0) \cdot \tilde{\mathcal{W}}},
	\label{eq:Y-U}
	\end{equation}
	where $D_2\in(0\,,\infty)$ depends only on $d$, $f(0)$,
	and $t$, and
	\begin{equation}
		\tilde{\mathcal{W}} := 
		\int_0^t\d s\left(\int_{[x-\beta\sqrt{t},\,x+\beta\sqrt{t}]^c}
		\d y\ p_{t-s}(y-x)\right)^2.
	\end{equation}
	Before we proceed further, let us note that
	\begin{equation}\label{eqp}
		\int_{\substack{z\in\R:\\ |z|>\beta\sqrt{t} }}
		\frac{\e^{-z^2/(2\varkappa(t-s))}}{\sqrt{2\pi \varkappa(t-s)}}
		\,\d z
		\le 2\cdot \exp\left(-
		\frac{\beta^2 t}{4\varkappa(t-s)}\right).
	\end{equation}
	Using the above in \eqref{eq:Y-U}, we obtain
	\begin{equation}\label{eq2:Y-U}
		\left\|Y_t^{(\beta)}(x)-U_t^{(\beta)}(x) \right\|_k 
		\le 2 D_2\e^{D_2tk} \sqrt{ktf(0)}\exp\left(
		-\frac{d\beta^2 }{4\varkappa}\right).
	\end{equation}

	Next we estimate $\|u_t(x)-V_t^{(\beta)}(x)\|_k$.  
	An application of \textnormal{(BDG)} yields
	\begin{align}
		& \left\|u_t(x)-V_t^{(\beta)}(x) \right\|_k\\\notag
		&\le \left\|\int_{(0,t)\times \R^d} p_{t-s}(y-x)
			\left\lbrace \sigma(u_s(y) )-\sigma(U_s^{(\beta)}(y))
			\right\rbrace F^{(h)}(\d s\,\d y)\right\|_k \\\notag
		&\le 2\sqrt k \left\| \int_0^t\d s \int_{\R^d}\d y
			\int_{\R^d}\d z\ f(y-z) p_{t-s}(y-x)p_{t-s}(z-x)
			\mathcal{Q}\right\|_{k/2},
	\end{align}
	where $\mathcal{Q} := | \sigma (u_s(y) )-\sigma(U_s^{(\beta)}(y))
	|\cdot|\sigma (u_s(z) )-\sigma (U_s^{(\beta)}(z) )|.$
	Since $\sigma$ is Lipschitz continuous, it follows from Minkowski's
	inequality that
	\begin{equation}\label{eq:u-V}
		\left\|u_t(x)-V_t^{(\beta)}(x) \right\|_k^2
		\le 4\text{\rm Lip}_\sigma^2 kf(0) \int_0^t
		\mathcal{Q}^*_s\,\d s,
	\end{equation}
	where $\mathcal{Q}^*_s := \sup_{y\in\R^d} \|
	u_s(y) -U_s^{(\beta)}(y) \|_k^2.$
	 Equations \eqref{u:decompose}, \eqref{eq:V-Y} and \eqref{eq2:Y-U} together imply that
	$\mathcal{Q}^*_t
	\le \textnormal{const}\cdot kt\beta^{-b}\e^{\textnormal{const}
	\cdot kt}+\textnormal{const}\cdot kf(0)
	\cdot \int_0^t \mathcal{Q}^*_s\, \d s.$
	 Therefore, 
	\begin{equation}
		\mathcal{Q}^*_t \le 
		\textnormal{const}\cdot\left( \frac{tk
		\e^{\textnormal{const}\cdot k t}}{\beta^b} \right)
		\qquad\text{for all $t>0$},
	\end{equation}
	owing to Gronwall's inequality.
	Because ``const'' does not depend on $(k\,,t)$,
	we take both sides to the power $k/2$ in order to finish the proof.
 \end{proof}  
 
 Now, let us define $U_t^{(\beta, n)}$ to be the $n$th 
 Picard-iteration approximation of $U_t^{(\beta)}(x)$. 
 That is , $U_t^{(\beta,0)}(x):=u_0(x)$ , and for all $l\ge 0$,
 \begin{equation}\begin{split}\label{ubn}
	 &U_t^{(\beta, l+1)}(x)\\
	 &:= \left(p_t*u_0\right)(x)+\int_{(0,t)\times [x-\beta\sqrt{t}, x+\beta\sqrt{ t}]}
	 	p_{t-s}(y-x) \sigma\left(U_s^{(\beta, l)}(y)\right)\,
		F^{(h_{\beta})}(\d s\, \d y) .
 \end{split}\end{equation}
 
\begin{lemma}\label{lem:u-Ubl}
For every $T>0$ there exists finite and positive constants 
	$G$ and $F$---depending only on 
	$(T\,,f(0)\,,d\,, \varkappa\,,b\,,\sigma)$---such 
	that for sufficiently large $\beta>0$ and $k\ge 1$,
	\begin{equation}
		\sup_{t\in [0,T]}\sup_{x \in \mathbf{R}^d} 
		\E\left(\left|u_t(x)-U_t^{(\beta, [\log \beta]+1)}(x)\right|^k\right) \le 
		\frac{G^k k^{k/2}\exp(F k^2 )}{\beta^{kb/2}},
	\end{equation}
	where $b\in(0\,,2)$ was introduced in Lemma \ref{lem:Omega}.
\end{lemma}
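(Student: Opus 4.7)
The plan is to apply the triangle inequality
\begin{equation*}
\bigl\|u_t(x) - U^{(\beta, n)}_t(x)\bigr\|_k \le \bigl\|u_t(x) - U^{(\beta)}_t(x)\bigr\|_k + \bigl\|U^{(\beta)}_t(x) - U^{(\beta, n)}_t(x)\bigr\|_k,
\end{equation*}
where $n := [\log\beta]+1$. Lemma \ref{lem:localization} already controls the first summand by $G_* k^{1/2}\e^{F_* k}/\beta^{b/2}$, so the task reduces to showing that $n$ Picard iterations of the localized equation \eqref{eq:U_beta} contract the error by a factor of $\beta^{-b/2}$ in the $L^k(\P)$-norm.

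Writing $V^{(\beta,\ell)}_t(x) := U^{(\beta)}_t(x) - U^{(\beta,\ell)}_t(x)$ and subtracting \eqref{ubn} from \eqref{eq:U_beta} expresses $V^{(\beta,\ell+1)}$ as a stochastic integral against $F^{(h_\beta)}$ with integrand $p_{t-s}(y-x)[\sigma(U^{(\beta)}_s(y))-\sigma(U^{(\beta,\ell)}_s(y))]$. Applying \textnormal{(BDG)}, Minkowski's integral inequality, Cauchy--Schwarz in the $\|\cdot\|_{k/2}$-norm, the Lipschitz property of $\sigma$, and the elementary bound $\iint p_{t-s}(y-x)p_{t-s}(z-x) f^{(h_\beta)}(y-z)\,\d y\,\d z \le f^{(h_\beta)}(0) \le f(0)$ (valid since $h_\beta \le h$ pointwise by \eqref{rho}) yields
\begin{equation*}
\sup_{y\in\R^d}\bigl\|V^{(\beta,\ell+1)}_t(y)\bigr\|_k^2 \le 4kf(0)\lip^2\int_0^t \sup_{y\in\R^d}\bigl\|V^{(\beta,\ell)}_s(y)\bigr\|_k^2\,\d s.
\end{equation*}
Multiplying by $\e^{-2\delta t}$ and taking the supremum over $(t,x)$ reformulates this as the recursion
\begin{equation*}
\mathcal{M}^{(k)}_\delta\bigl(V^{(\beta,\ell+1)}\bigr) \le \sqrt{\tfrac{2kf(0)\lip^2}{\delta}}\,\mathcal{M}^{(k)}_\delta\bigl(V^{(\beta,\ell)}\bigr)
\end{equation*}
in the Foondun--Khoshnevisan norm \eqref{eq:M}.

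The decisive step is the choice $\delta := \mu k$ with $\mu := \max(2\e^b f(0)\lip^2,\, D_1)$, where $D_1$ is the $\varkappa$-independent constant of \eqref{mom:U_beta} (see Remark \ref{rem:NoKappa}). The first constraint forces the per-step contraction factor to be at most $\e^{-b/2}$, while the second makes $\mathcal{M}^{(k)}_\delta(U^{(\beta)})$, and hence $\mathcal{M}^{(k)}_\delta(V^{(\beta,0)}) = \mathcal{M}^{(k)}_\delta(U^{(\beta)}-u_0)$, bounded by a constant uniform in $k\ge 1$ and $\beta>0$. Iterating $n\ge \log\beta$ times then gives
\begin{equation*}
\bigl\|V^{(\beta,n)}_t(x)\bigr\|_k \le \e^{\delta t}\mathcal{M}^{(k)}_\delta\bigl(V^{(\beta,n)}\bigr) \le \textnormal{const}\cdot \e^{\mu k T}\beta^{-b/2} \qquad (t\in[0,T]),
\end{equation*}
which fits inside the target bound $G k^{1/2}\e^{Fk}/\beta^{b/2}$. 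Adding this to Lemma \ref{lem:localization} and raising to the $k$-th power yields the conclusion. The real difficulty is the simultaneous calibration of $\delta$: it must be proportional to $k$ and large enough to turn the BDG recursion into a strict contraction with factor $\e^{-b/2}$, yet still compatible with \eqref{mom:U_beta} so that the initial error $\mathcal{M}^{(k)}_\delta(V^{(\beta,0)})$ stays bounded uniformly in $k$ and $\beta$; the $\varkappa$-independent character of $D_1$ is exactly what makes both conditions achievable with the same $\delta$.
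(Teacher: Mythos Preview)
Your proof is correct and follows essentially the same route as the paper: split via the triangle inequality into $u-U^{(\beta)}$ (handled by Lemma~\ref{lem:localization}) and $U^{(\beta)}-U^{(\beta,n)}$, then run a Picard-iteration contraction in the weighted norm $\mathcal{M}^{(k)}_\delta$ with $\delta\propto k$ chosen large enough to make the per-step factor strictly less than $1$. The only cosmetic differences are that the paper iterates on successive differences $U^{(\beta,n+1)}-U^{(\beta,n)}$ (citing the Foondun--Khoshnevisan framework and their function $Q(k,\theta)$) and targets contraction factor $\e^{-1}$, whereas you iterate directly on the error $U^{(\beta)}-U^{(\beta,\ell)}$ and calibrate the factor to $\e^{-b/2}$; both choices lead to the same bound.
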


\begin{proof} 
	The method of Foondun and Khoshnevisan \cite{FK:TRAMS} 
	can be used to show that if $\delta:=D'k$ for a sufficiently-large 
	positive and finite constant $D'$, then 
	\begin{equation}\label{eq:U-U}
		\mathcal{M}_{\delta}^{(k)} \left(U^{(\beta)}-U^{(\beta,n)}\right) 
		\le \text{const}\cdot \e^{-n} \qquad\text{for all 
		$n\ge 0$ and $k \in [2\,,\infty)$.}
	\end{equation}
	To elaborate, we replace the $u^n$ of Ref.\ \cite[(5.36)]{FK:TRAMS} 
	by our $U^{(\beta, n)}$ and obtain
	\begin{equation}
		\|U^{(\beta, n+1)}-U^{(\beta, n)}\|_{k,\theta} \le 
		\|U^{(\beta, n)}-U^{(\beta, n-1)}\|_{k,\theta} \cdot Q(k\,,\theta),
	\end{equation}
	where $\|X\|_{k,\theta} := \{ \sup_{t\ge 0}\sup_{x\in \mathbf{R}} 
	\e^{-\theta t} \E (|X_t(x)|^k ) \}^{1/k}
	=\mathcal{M}_{\theta/k}^{(k)}(X),$
	for all  random fields $\{X_t(x)\}_{t>0,x\in\R^d}$,
	and $Q(k\,,\theta)$ is defined in Theorem 1.3 of 
	\cite{FK:TRAMS}. We recall from \cite{FK:TRAMS} that $Q(k\,,\theta)$ satisfies
	the following bounds:
	\begin{equation}
		Q(k\,,\theta)\le 
		\sqrt{4k\lip^2\cdot\Upsilon\left(\frac{2\theta}{k}\right)} 
		\le \text{const}\cdot \frac{k\|h\|_{L^2(\mathbf{R}^d)}}{\theta^{\nicefrac12}}.
	\end{equation}
	[The function $\Upsilon$ is defined in \cite[(1.8)]{FK:TRAMS}.]
	Therefore, it follows readily from these bounds that if
	$\theta:=D''k^2$ for a large enough $D''>0$, then
	\begin{equation}
		\|U^{(\beta, n+1)}-U^{(\beta, n)}\|_{k,\theta} 
		\le \e^{-1}\|U^{(\beta, n)}-U^{(\beta, n-1)}\|_{k,\theta}.
	\end{equation}
	We obtain \eqref{eq:U-U} from this inequality. 
	
	Finally we set $n := [\log \beta]+1$ and apply the preceding 
	together with Lemma \ref{lem:localization} to finish the proof.
\end{proof}
	
For every $x,\,y\in \R^d$, let us define
\begin{equation}\label{d}
	D(x\,,y):=\min_{1\le l\le d} |x_l-y_l|.
\end{equation} 
	
\begin{lemma} \label{lem:un-ind}
	Choose and fix $\beta \ge 1,\, t>0$ and let $n := [\log \beta]+1$. 
	Also fix $x^{(1)},x^{(2)},\cdots \in \mathbf{R}^d$ such that 
	$D(x^{(i)}\,, x^{(j)})\ge 2n\beta(1+\sqrt{t})$. 
	Then $\{ U_t^{(\beta, n)}(x^{(j)})\}_{j\in \Z}$ are independent random variables. 
\end{lemma}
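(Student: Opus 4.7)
The plan is to prove by induction on $\ell \geq 0$ that $U_t^{(\beta,\ell)}(x)$ is measurable with respect to the $\sigma$-algebra generated by the noise $\{W_s(z) : 0 \leq s \leq t,\ z \in [x - \ell\beta(1+\sqrt t), x + \ell\beta(1+\sqrt t)]\}$---heuristically, the portion of the Brownian-sheet noise lying in the spatial $\ell^\infty$-ball of radius $\ell\beta(1+\sqrt t)$ around $x$, up to time $t$. Granting this, specialize to $\ell = n$: the separation hypothesis $D(x^{(i)}, x^{(j)}) \geq 2n\beta(1+\sqrt t)$ together with \eqref{d} forces the balls $[x^{(j)} - n\beta(1+\sqrt t), x^{(j)} + n\beta(1+\sqrt t)]$ to be pairwise disjoint (they are separated in at least one coordinate direction), and then the independent-spatial-increments property of the Brownian sheet yields mutual independence of $\{U_t^{(\beta,n)}(x^{(j)})\}_{j \in \Z}$.

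For the inductive step, the base case $\ell = 0$ is immediate since $U_t^{(\beta,0)}(x) = u_0(x)$ is nonrandom. For the step from $\ell$ to $\ell+1$, I apply the stochastic Fubini identity \eqref{eq:Fubini} (extended from deterministic test functions to predictable random integrands by the standard density approximation of Walsh--Dalang theory) to the right-hand side of \eqref{ubn}, obtaining
\begin{equation*}
U_t^{(\beta,\ell+1)}(x) - (p_t * u_0)(x) = \int_{(0,t) \times \R^d} \Psi(s, z) \, W(\d s\, \d z),
\end{equation*}
where
\begin{equation*}
\Psi(s, z) := \int_{[x-\beta\sqrt t, x+\beta\sqrt t]} p_{t-s}(y-x) \, \sigma\bigl(U_s^{(\beta,\ell)}(y)\bigr) \, \tilde h_\beta(z - y) \, \d y.
\end{equation*}
Two observations then combine. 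First, \eqref{rho} gives $h_\beta(y) = h(y)\hat\varrho_\beta(y)$ with $\hat\varrho_\beta(y) = \prod_{j=1}^d (1 - |y_j|/\beta)^+$ supported in $[-\beta,\beta]^d$, so $\tilde h_\beta$ has the same support; hence $\Psi(s, z) = 0$ whenever $z$ lies outside $[x-\beta\sqrt t, x+\beta\sqrt t] + [-\beta,\beta]^d = [x-\beta(1+\sqrt t), x+\beta(1+\sqrt t)]$. Second, the random values $U_s^{(\beta,\ell)}(y)$ that enter $\Psi(s, z)$ have $y \in [x-\beta\sqrt t, x+\beta\sqrt t]$ and, by the inductive hypothesis together with $\sqrt s \leq \sqrt t$, are jointly measurable with respect to $W$ on $(0, t) \times [x-\beta\sqrt t - \ell\beta(1+\sqrt t), x+\beta\sqrt t + \ell\beta(1+\sqrt t)]$. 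The union of these two dependence regions is contained in $(0, t) \times [x-(\ell+1)\beta(1+\sqrt t), x+(\ell+1)\beta(1+\sqrt t)]$, which closes the induction.

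The principal technical care is the bookkeeping of the dependence radius: each Picard step adds $\beta\sqrt t$ from the localization window in \eqref{ubn} and $\beta$ from the convolution with $\tilde h_\beta$, so the radius grows by $\beta(1+\sqrt t)$ per iteration, and after $n = [\log\beta]+1$ iterations the cumulative radius is exactly $n\beta(1+\sqrt t)$, matching the separation hypothesis (the factor $2$ providing strict disjointness of adjacent balls). A secondary technical point is that the Fubini identity \eqref{eq:Fubini}, stated for deterministic test functions, must be extended to predictable random integrands to reduce each $F^{(h_\beta)}$-integral to a Brownian-sheet integral; this is routine via approximation by simple processes.
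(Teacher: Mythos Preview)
Your proof is correct and follows essentially the same localization-by-induction approach as the paper. The only difference is one of presentation: the paper argues by tracking the dependence of $U^{(\beta,n)}_t(x)$ down to the first Picard iterate $U^{(\beta,1)}$ (whose values at well-separated points are independent because $h_\beta*\tilde h_\beta$ has compact support), whereas you go one step further and track measurability directly with respect to the underlying Brownian sheet $W$ via the stochastic Fubini identity \eqref{eq:Fubini}; your bookkeeping of the radius increment $\beta(1+\sqrt t)$ per iteration is slightly more explicit and matches the hypothesis exactly.
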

	
\begin{proof}
	The lemma follows from the recursive definition of the 
	$U^{(\beta,n)}$'s. Indeed, $U^{(\beta,n)}_t(x)$ depends on 
	$U^{(\beta, n-1)}_s(y),\,y\in [x-\beta\sqrt{t}\,,x+\beta\sqrt{t}],\,s\in [0,t]$. 
	An induction argument shows that  $U^{(\beta,n)}_t(x)$ depends only
	on the values of $U^{(\beta,1)}_s(y)$, as $y$ varies in
	$[x-(n-1)\beta\sqrt{t}\,,x+(n-1)\beta\sqrt{t}]$ and $s$ in $[0\,,t]$. 
		
		Finally, we observe that $\{U_s^{(\beta, 1)}(x)\}_{%
		s\in[0,t],\,  x\in \mathbf{R}^d}$ is a Gaussian random field 
		that has the property that $U_s^{(\beta, 1)}(x)$ and 
		$U_s^{(\beta,1)}(x')$ are independent whenever 
		$D(x\,, x') \ge 2\beta(1+\sqrt{t})$. 
		[This assertion follows from a direct covariance calculation in
		conjunction with the fact that $(h_\beta*\tilde{h}_\beta)(z)=0$
		when $D(0,z) \ge 2\beta$].
\end{proof}

\section{Proof of Theorem \ref{th:boundedness}}

In this section we prove our first main theorem (Theorem \ref{th:boundedness}).
It is our first proof primarily because the following derivation is
the least technical and requires that we keep track of very few
parameter dependencies in our inequalities. 

Define for all $k\in[2\,,\infty)$, $\beta>0$, and predictable random
fields $Z$,
\begin{equation}
	\mathcal{Y}_\beta^{(k)}(Z) := 
	\sup_{\substack{t>0\\x\in\R^d}} \left[
	\exp\left( -\beta t+\sqrt{\frac{\beta}{8\varkappa}}\ \|x\|\right)
	\cdot \left\| Z_t(x) \right\|_k \right].
\end{equation}

Let us begin by developing a weighted Young's inequality for
stochastic convolutions. This is similar in spirit to the results of
Conus and Khoshnevisan \cite{ConusK}, extended to the present
setting of correlated noise. However, entirely new ideas
are needed in order to develop this result; therefore, we include a complete proof.

\begin{proposition}[A weighted stochastic Young inequality]\label{pr:Young}
	Let $Z:=\{Z_t(x)\}_{t>0,x\in\R^d}$ be a predictable random field.
	Then for all real numbers $k\in[2\,,\infty)$
	and $\beta>0$,
	\begin{equation}
		\mathcal{Y}_\beta^{(k)}
		\left( p*Z\dot{F}\right) \le 
		\mathcal{Y}_\beta^{(k)}(Z)\cdot
		\sqrt{2^{d} k  ( R_{\beta/4}f  ) (0)},
	\end{equation}
	where $R_{\beta}$ is the resolvent operator defined in \eqref{eq:R_beta}.
\end{proposition}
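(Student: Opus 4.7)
My plan mirrors the \textnormal{(BDG)}$+$Minkowski$+$Cauchy--Schwarz routine used in Propositions \ref{pr:mk} and \ref{pr:ub:general}, but inserts a Gaussian completion-of-the-square trick that is designed to absorb the exponential spatial weight built into $\mathcal{Y}_\beta^{(k)}$.

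Fix $t>0$ and $x\in\R^d$, and set $a:=\sqrt{\beta/(8\varkappa)}$. Applying \textnormal{(BDG)} to the continuous $L^2(\P)$ martingale $r\mapsto\int_{(0,r)\times\R^d}p_{t-s}(y-x)Z_s(y)\,F(\d s\,\d y)$, then Minkowski's inequality for $\|\cdot\|_{k/2}$ and the Cauchy--Schwarz bound $\|Z_s(y)Z_s(z)\|_{k/2}\le\|Z_s(y)\|_k\|Z_s(z)\|_k$, I obtain
\[
\|(p*Z\dot{F})_t(x)\|_k^2\le 4k\int_0^t\!\iint p_{t-s}(y-x)p_{t-s}(z-x)f(y-z)\|Z_s(y)\|_k\|Z_s(z)\|_k\,\d y\,\d z\,\d s.
\]
Plugging in $\|Z_s(y)\|_k\le\mathcal{Y}_\beta^{(k)}(Z)\,\e^{\beta s-a\|y\|}$, and invoking $-\|y\|\le\|y-x\|-\|x\|$ (and likewise for $z$), extracts the factor $\e^{2\beta s-2a\|x\|}[\mathcal{Y}_\beta^{(k)}(Z)]^2$. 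Changing variables $u:=t-s$, $y':=y-x$, $z':=z-x$ reduces the whole claim to the deterministic inequality
\[
4\int_0^\infty\e^{-2\beta u}I(u)\,\d u\le 2^d(R_{\beta/4}f)(0),\quad I(u):=\iint p_u(y')p_u(z')f(y'-z')\e^{a(\|y'\|+\|z'\|)}\,\d y'\,\d z'.
\]

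The key new ingredient is the pointwise Gaussian bound
\[
p_u(y')\,\e^{a\|y'\|}\le 2^{d/2}\,\e^{\varkappa u a^2}\,p_{2u}(y'),
\]
whose proof is a one-line AM--GM: $a\|y'\|\le\varkappa u a^2+\|y'\|^2/(4\varkappa u)$, so that the quadratic term $\|y'\|^2/(4\varkappa u)$ halves the Gaussian exponent of $p_u$ (producing $p_{2u}$), at the cost of a normalization factor $2^{d/2}$. Applying this in both $y'$ and $z'$, together with $\varkappa a^2=\beta/8$ and the identity $\iint p_s(y)p_s(z)f(y-z)\,\d y\,\d z=(p_{2s}*f)(0)$ (a direct consequence of evenness of $p_s$ and the semigroup property), gives $I(u)\le 2^d\e^{\beta u/4}(p_{4u}*f)(0)$.

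It only remains to compute: after the substitution $v=4u$,
\[
4\int_0^\infty\e^{-2\beta u}I(u)\,\d u\le 2^{d+2}\!\int_0^\infty\e^{-7\beta u/4}(p_{4u}*f)(0)\,\d u=2^d\!\int_0^\infty\e^{-7\beta v/16}(p_v*f)(0)\,\d v=2^d(R_{7\beta/16}f)(0),
\]
and since $7/16\ge 1/4$ the latter is bounded above by $2^d(R_{\beta/4}f)(0)$. Taking supremum over $(t,x)$ and square roots gives the proposition. The only nontrivial step is the Gaussian bound on $p_u\e^{a\|\cdot\|}$; everything else is bookkeeping, and the very specific coefficient $\sqrt{\beta/(8\varkappa)}$ in the definition of $\mathcal{Y}_\beta^{(k)}$ has clearly been engineered so that the resulting resolvent index $7\beta/16$ lands above $\beta/4$ with some room to spare.
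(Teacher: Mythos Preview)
Your proof is correct and follows essentially the same approach as the paper's: \textnormal{(BDG)}$+$Minkowski, extraction of the weight via $-\|y\|\le\|y-x\|-\|x\|$, a pointwise Gaussian bound $p_u(\cdot)\e^{c\|\cdot\|}\le\text{const}\cdot p_{2u}(\cdot)$, then Chapman--Kolmogorov and a change of variable. The only difference is that you obtain the Gaussian bound by a direct AM--GM completion of the square, whereas the paper introduces $Q_s(a):=\e^{-\beta s/2+c\|a\|}p_s(a)$ and proves $Q_s\le 2^{d/2}p_{2s}$ by a case analysis on whether $\beta s/2\gtrless c\|a\|$; your route is slightly cleaner and in fact yields the sharper resolvent index $7\beta/16$ in place of the paper's $\beta/4$.
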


\begin{proof}
	For the sake of typographical ease we write 
	$c=c(\beta) :=\sqrt{\beta/(8\varkappa)}$ throughout the proof.
	
	Our derivation of \eqref{4.24} yields the following estimate:
	\begin{equation}\begin{split}
		&\left\| \left( p*Z\dot{F} \right)_t(x) \right\|_k^2 \\
		&\hskip.5in\le 4k\int_0^t\d s\int_{\R^d}\d y\int_{\R^d}\d z\
			f(y-z) p_{t-s}(y-x)p_{t-s}(z-x)\cdot\mathcal{Z},
	\end{split}\end{equation}
	where $\mathcal{Z} := \| Z_s(y)\cdot Z_s(z) \|_{k/2}\le
	\|Z_s(y)\|_k\cdot\|Z_s(z)\|_k$.
	Consequently, for all $\beta>0$,
	\begin{align}
		&\left\| \left( p*Z\dot{F} \right)_t(x) \right\|_k^2 \\\notag
		&\le 4k\left[ \mathcal{Y}_\beta^{(k)}(Z) \right]^2\cdot
			\int_0^t\d s\int_{\R^d}\d y\int_{\R^d}\d z\
			f(y-z) P_{s}(y\,,y-x) P_{s}(z\,,z-x),
	\end{align}
	where  $P_s(a\,,b) := \e^{\beta s-c\|a\|} p_{t-s}(b)$
	for all $s>0$ and $a\in\R^d$.
	Since $\|y\|\ge\|x\|-\|x-y\|$ and $\|z\|\ge\|x\|-\|x-z\|$,
	it follows that
	\begin{align}\label{eq:QQ}
		&\hskip-.3in\left\| \left( p*Z\dot{F} \right)_t(x) \right\|_k^2 \\\notag
		&\hskip.6in\le 4k\e^{2\beta t-2c\|x\|}
			\left[ \mathcal{Y}_\beta^{(k)}(Z) \right]^2\cdot
			\int_0^\infty \e^{-\beta s}
			\left( Q_s*Q_s*f\right)(0)\,\d s,
	\end{align}
	where 
	\begin{equation}\label{eq:Q_s}
		Q_s(a) :=  \e^{-(\beta s/2)+c\|a\|}p_s(a)
		\qquad\text{for all $s>0$ and $a\in\R^d$}.
	\end{equation}
	Clearly,
	\begin{equation}\label{eq:Q1}
		\text{if $\frac{\beta s}{2} \ge c\|a\|$, then}\quad
		Q_s(a) \le p_s(a).
	\end{equation}
	Now consider the case that $(\beta s/2)< c\|a\|$. Then,
	\begin{equation}
		c\|a\| - \frac{\|a\|^2}{2s\varkappa}
		= -\frac{\|a\|^2}{2s\varkappa} 
		\left( 1- \frac{2s\varkappa c}{\|a\|}\right)
		< -\frac{\|a\|^2}{2s\varkappa} 
		\left( 1  - \frac{4\varkappa c^2}{\beta} \right)=
		-\frac{\|a\|^2}{4s\varkappa} .
	\end{equation}
	We can exponentiate the preceding to see that, in the case that
	$(\beta s/2)< c\|a\|$,
	\begin{equation}\label{eq:Q2}
		Q_s(a) \le 
		\frac{\e^{-(\beta s/2) - \|a\|^2/(4s\varkappa)}}{(2\pi\varkappa s)^{d/2}}
		\le 2^{d/2} p_{2s}(a).
	\end{equation}
	Since $p_s(a) \le 2^{d/2} p_{2s}(a)$ for all $s>0$ and $a\in\R^d$, 
	we deduce from \eqref{eq:Q1} and \eqref{eq:Q2} that \eqref{eq:Q2}
	holds for all $s>0$ and $a\in\R^d$.
	Therefore, the Chapman--Kolmogorov equation implies that
	$Q_s*Q_s \le 2^d p_{4s}$, and hence
	\begin{equation}\begin{split}
		\int_0^\infty \e^{-\beta s} 
			\left( Q_s*Q_s*f\right)(0)\,\d s &\le
			2^d\int_0^\infty \e^{-\beta s} \left( 
			p_{4s}*f\right)(0)\,\d s\\
		&=  2^{d-2} ( R_{\beta/4}f)(0).
	\end{split}\end{equation}
	The proposition now follows from \eqref{eq:QQ}.
\end{proof}

Next we state and prove an elementary 
estimate for the heat semigroup.

\begin{lemma}\label{lem:pt:phi}
	Suppose $\phi:\R^d\to\R$ is a measurable function
	and $L(c):=\sup_{x\in\R^d} 
	(\e^{c\|x\|}|\phi(x)|)$ is finite
	for some $c>0$. Then 
	$\mathcal{Y}_{8c^2\varkappa}^{(k)} ( p*\phi  )\le 2^{d/2}L(c)$
	for all $k\in[2\,,\infty)$.
\end{lemma}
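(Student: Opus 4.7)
My plan is to reduce the statement to a deterministic pointwise estimate on the heat-convolution $(p_t*\phi)(x)$, and then control the integral $\int p_t(z)\,\e^{c\|z\|}\,\d z$ by the very same Gaussian-completion trick that appears in the proof of Proposition \ref{pr:Young}.

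Since $\phi$ is nonrandom, $\|(p_t*\phi)(x)\|_k = |(p_t*\phi)(x)|$ for every $k \in [2,\infty)$, so there is nothing probabilistic to do. The hypothesis gives $|\phi(y)| \le L(c)\,\e^{-c\|y\|}$ for all $y \in \R^d$, whence
\begin{equation*}
  |(p_t*\phi)(x)| \le L(c)\int_{\R^d} p_t(x-y)\,\e^{-c\|y\|}\,\d y.
\end{equation*}
The triangle inequality $\|y\|\ge \|x\|-\|x-y\|$ lets me pull out the desired factor $\e^{-c\|x\|}$:
\begin{equation*}
  |(p_t*\phi)(x)| \le L(c)\,\e^{-c\|x\|}\int_{\R^d} p_t(z)\,\e^{c\|z\|}\,\d z.
\end{equation*}

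To bound the remaining integral, I recycle the calculation done for $Q_s$ in the proof of Proposition \ref{pr:Young}. With $\beta := 8c^2\varkappa$, so that $c = \sqrt{\beta/(8\varkappa)}$, the function $\e^{c\|z\|}\,p_t(z)$ equals $\e^{\beta t/2} Q_t(z)$ in the notation of \eqref{eq:Q_s}. The bound $Q_t(z)\le 2^{d/2}p_{2t}(z)$ established via \eqref{eq:Q1}--\eqref{eq:Q2} therefore gives
\begin{equation*}
  \int_{\R^d} p_t(z)\,\e^{c\|z\|}\,\d z \le 2^{d/2}\,\e^{\beta t/2}\int_{\R^d} p_{2t}(z)\,\d z = 2^{d/2}\,\e^{4c^2\varkappa t}.
\end{equation*}

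Combining the two displays yields
\begin{equation*}
  \e^{-8c^2\varkappa t + c\|x\|}\,|(p_t*\phi)(x)| \le 2^{d/2}\,L(c)\,\e^{-4c^2\varkappa t} \le 2^{d/2}\,L(c),
\end{equation*}
uniformly in $t>0$ and $x\in\R^d$, which is precisely the desired bound on $\mathcal{Y}^{(k)}_{8c^2\varkappa}(p*\phi)$. There is no real obstacle; the only point that required thought was choosing $\beta = 8c^2\varkappa$ so that the ``$Q_s$-to-$p_{2s}$'' comparison from Proposition \ref{pr:Young} applies verbatim and swallows the prefactor $\e^{c\|x\|}$ against the exponential weight.
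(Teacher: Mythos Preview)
Your proof is correct and follows essentially the same route as the paper: both use the triangle inequality to pull out the factor $\e^{-c\|x\|}$, identify the remaining integrand with the function $Q_t$ from \eqref{eq:Q_s}, and invoke the bound $Q_t\le 2^{d/2}p_{2t}$ from \eqref{eq:Q1}--\eqref{eq:Q2}. The only cosmetic difference is that the paper multiplies through by $\e^{-\beta t+c\|x\|}$ at the outset, whereas you first bound $|(p_t*\phi)(x)|$ and then apply the weight.
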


\begin{proof}
	Let us define $\beta:=8c^2\varkappa$, so that 
	$c =\sqrt{\beta/(8\varkappa)}$. Then,
	\begin{align}\notag
		\e^{-\beta t+c\|x\|} \left| (p_t*\phi)(x) \right|
			&= \int_{\R^d}\e^{-\beta t+ c\|x\|}
			p_t(x-y) |\phi(y)|\,\d y\\
		&\le\int_{\R^d} \e^{-\beta t+c\|x-y\|}
			p_t(x-y) \cdot \e^{c\|y\|}|\phi(y)|\,\d y\\\notag
		&\le L(c)\int_{\R^d}\e^{-\beta t+c\|z\|}
			p_t(z)\,\d z \le L(c)\int_{\R^d} Q_t(z)\,\d z,
	\end{align}
	where the function $Q_t(z)$ is defined in \eqref{eq:Q_s}.
	We apply \eqref{eq:Q2} to deduce from this that
	$\e^{-\beta t+c\|x\|} | (p_t*\phi)(x) |
	\le 2^{d/2}L(c)\int_{\R^d}p_{2t}(z)\,\d z=2^{d/2}L(c).$
	Optimize over $t$ and $x$ to finish.
\end{proof}

We will next see how to combine the preceding results  in order
to establish the rapid decay of the moments of the solution to
\textnormal{(SHE)} as $\|x\|\to\infty$.

\begin{proposition}\label{pr:moment:decay}
	Recall that $u_0:\R^d\to\R$ is a bounded and measurable function and $\sigma(0)=0$.
	If, in addition, $\limsup_{\|x\|\to\infty} \|x\|^{-1}\log |u_0(x)|=-\infty$,
	then
	\begin{equation}
		\limsup_{\|x\|\to\infty}  \frac{\log\E
		(|u_t(x)|^k)}{\|x\|}<0\quad\text{for all $t>0$ and $k\in[2\,,\infty)$.}
	\end{equation}
\end{proposition}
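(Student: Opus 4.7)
The plan is to combine the weighted stochastic Young inequality of Proposition \ref{pr:Young} with the heat-semigroup bound of Lemma \ref{lem:pt:phi}, exploiting the hypothesis $\sigma(0)=0$ (so that $|\sigma(z)|\le\lip|z|$) together with the faster-than-exponential decay of $u_0$ to close a contraction estimate on the weighted norm $\mathcal{Y}_\beta^{(k)}$. The decay hypothesis says precisely that
\begin{equation*}
L(c) := \sup_{x\in\R^d}\left[\e^{c\|x\|}|u_0(x)|\right]<\infty \qquad\text{for \emph{every} } c>0.
\end{equation*}
My target is to produce some $c>0$ for which $\mathcal{Y}_\beta^{(k)}(u)<\infty$ with $\beta:=8c^2\varkappa$; since this immediately yields $\|u_t(x)\|_k\le\mathcal{Y}_\beta^{(k)}(u)\cdot\e^{\beta t-c\|x\|}$, raising to the $k$-th power and taking $\|x\|\to\infty$ gives
\begin{equation*}
\limsup_{\|x\|\to\infty}\frac{\log\E(|u_t(x)|^k)}{\|x\|}\le -kc<0,
\end{equation*}
which is what is claimed.

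To manufacture a solution with finite weighted norm, I would run a Picard scheme: set $u^{(0)}_t(x):=(p_t*u_0)(x)$ and
\begin{equation*}
u^{(n+1)}_t(x) := (p_t*u_0)(x)+\int_{(0,t)\times\R^d}p_{t-s}(y-x)\,\sigma\!\left(u^{(n)}_s(y)\right) F(\d s\,\d y).
\end{equation*}
Lemma \ref{lem:pt:phi} applied to $\phi=u_0$ at $\beta=8c^2\varkappa$ yields $\mathcal{Y}_\beta^{(k)}(p*u_0)\le 2^{d/2}L(c)$. Because $\sigma(0)=0$, the Lipschitz property gives $\mathcal{Y}_\beta^{(k)}(\sigma(u^{(n)}))\le\lip\,\mathcal{Y}_\beta^{(k)}(u^{(n)})$. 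Combining this with Proposition \ref{pr:Young} and the triangle inequality produces the affine recursion
\begin{equation*}
\mathcal{Y}_\beta^{(k)}\!\left(u^{(n+1)}\right)\le 2^{d/2}L(c)+\theta\cdot\mathcal{Y}_\beta^{(k)}\!\left(u^{(n)}\right),\qquad \theta:=\lip\sqrt{2^d k\,(R_{\beta/4}f)(0)}.
\end{equation*}

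The main technical point is to secure the contraction $\theta<1$. Under Dalang's condition \eqref{cond:Dalang}, $(R_\beta f)(0)$ is finite for every $\beta>0$, and the monotone convergence theorem gives $(R_\beta f)(0)\downarrow 0$ as $\beta\to\infty$. Since $L(c)<\infty$ for every $c>0$ — this is exactly the luxury afforded by the faster-than-exponential decay of $u_0$ — I am free to choose $c$ (and hence $\beta=8c^2\varkappa$) as large as I need for the given $k$, which forces $\theta<1$. A trivial induction then gives $\sup_n\mathcal{Y}_\beta^{(k)}(u^{(n)})\le 2^{d/2}L(c)/(1-\theta)<\infty$. The standard Picard convergence under \eqref{cond:Dalang} (via the fixed-point framework of Dalang and Walsh) ensures that $u^{(n)}_t(x)\to u_t(x)$ in $L^k(\P)$ for each fixed $(t,x)$, and Fatou's lemma transfers the uniform bound to the limit:
\begin{equation*}
\mathcal{Y}_\beta^{(k)}(u)\le \liminf_{n\to\infty}\mathcal{Y}_\beta^{(k)}(u^{(n)})\le \frac{2^{d/2}L(c)}{1-\theta}<\infty,
\end{equation*}
which completes the argument. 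The delicacy lies entirely in the contraction step; the rest is bookkeeping and passing to the limit.
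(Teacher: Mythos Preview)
Your proof is correct and follows essentially the same route as the paper's: run the Picard scheme, control $\mathcal{Y}_\beta^{(k)}(p*u_0)$ via Lemma~\ref{lem:pt:phi}, use $|\sigma(z)|\le\lip|z|$ together with Proposition~\ref{pr:Young} to get the affine recursion, choose $\beta$ large so that $(R_{\beta/4}f)(0)$ is small enough for a contraction, and pass to the limit with Fatou. The only cosmetic differences are that the paper starts the iteration at $u^{(0)}=u_0$ rather than $p_t*u_0$ and invokes convergence in probability (Dalang) rather than $L^k$, neither of which affects the argument.
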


\begin{proof}
	For all $t>0$ and $x\in\R^d$, define $u^{(0)}_t(x) := u_0(x)$, and
	\begin{equation}
		u^{(l+1)}_t(x) := (p_t*u_0)(x) + \left( p * \left(\sigma\circ
		u^{(l)}\right)\dot{F} \right)_t(x)
		\quad\text{for all $l\ge 0$}.
	\end{equation}
	That is, $u^{(l)}$ is the $l^{\mbox{\scriptsize th}}$ level  in the Picard iteration
	approximation to the solution $u$. By the triangle inequality,
	\begin{equation}\begin{split}
		\mathcal{Y}_\beta^{(k)}\left( u^{(l+1)}\right) &\le
			 \mathcal{Y}_\beta^{(k)}\left(  p*u_0 \right) + 
			 \mathcal{Y}_\beta^{(k)}\left( \left( p* \left( \sigma\circ u^{(l)}
			 \right)\dot{F}\right)\right)\\
		&\le \mathcal{Y}_\beta^{(k)}\left( p*u_0\right)+
			\mathcal{Y}_\beta^{(k)}\left( \sigma\circ u^{(l)}\right)
			\cdot\sqrt{2^{d}k(R_{\beta/4}f)(0)};
	\end{split}\end{equation}
	see Proposition \ref{pr:Young}. Because $|\sigma(z)|\le \lip|z|$
	for all $z\in\R^d$, it follows from the triangle inequality that
	\begin{equation}
		\mathcal{Y}_\beta^{(k)}\left( u^{(l+1)}\right) \le
		\mathcal{Y}_\beta^{(k)}\left( p*u_0\right) + 
		\mathcal{Y}_\beta^{(k)}\left( u^{(l)}\right)\cdot
		\sqrt{2^{d}\lip^2 k (R_{\beta/4}f)(0)}.
	\end{equation}
	By the dominated convergence theorem, $\lim_{q\to\infty}(R_qf)(0)=0$.
	Therefore, we may choose $\beta$ large enough to ensure that
	the coefficient of $\mathcal{Y}_\beta^{(k)}(u^{(l)})$ in the preceding is
	at most $\nicefrac12$. The following holds for this choice of $\beta$:
	\begin{equation}
		\sup_{l\ge 0}\mathcal{Y}_\beta^{(k)}\left( u^{(l+1)}\right) \le 2
		\mathcal{Y}_\beta^{(k)}\left(  p*u_0\right)
		\le 2^{(d+2)/2}\sup_{x\in\R^d}
		\left( \e^{\|x\|\sqrt{\beta/(8\varkappa)}} |u_0(x)|\right);
	\end{equation}
	we have applied Lemma \ref{lem:pt:phi} in order to deduce the
	final inequality. According to the theory of Dalang \cite{Dalang:99},
	$u^{(l)}_t(x)\to u_t(x)$ in probability as $l\to\infty$,
	for all $t>0$ and $x\in\R^d$. Therefore, Fatou's lemma implies 
	that
	\begin{equation}
		\mathcal{Y}_\beta^{(k)} ( u) \le 2^{(d+2)/2}\sup_{x\in\R^d}
		\left( \e^{\|x\|\sqrt{\beta/(8\varkappa)}} |u_0(x)|\right);
	\end{equation}
	whence follows the result [after some arithmetic].
\end{proof}

Next we introduce a fairly crude estimate for the spatial oscillations
of the solution to \textnormal{(SHE)}, in the sense of $L^k(\P)$. 
We begin with an estimate of $L^1(\R^d)$-derivatives of
the heat kernel. This is without doubt a well-known result,
though we could not find an explicit reference.
In any event, the proof is both elementary and short; therefore
we include it for the sake of completeness.

\begin{lemma}\label{p_bound}
	For all $s>0$ and $x\in \R^d$, 
	\begin{equation}
		\int_{\R^d}\left| p_s(y-x)-p_s(y)\right|\,\d y
		\le \textnormal{const}\cdot
		\left(\frac{\|x\|}{\sqrt{\varkappa s}}\wedge 1\right),
	\end{equation}
	where the implied constant does not depend on
	$(s\,,x)$.
\end{lemma}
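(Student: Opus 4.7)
The proof splits into the two regimes encoded by the minimum on the right-hand side. The trivial bound $\int_{\R^d}|p_s(y-x)-p_s(y)|\,\d y \le 2\int p_s(y)\,\d y = 2$ handles the case $\|x\|\ge \sqrt{\varkappa s}$; so the only interesting claim is
\begin{equation*}
	\int_{\R^d}|p_s(y-x)-p_s(y)|\,\d y \le \textnormal{const}\cdot\frac{\|x\|}{\sqrt{\varkappa s}}.
\end{equation*}

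To obtain this, I would use the fundamental theorem of calculus along the segment from $y-x$ to $y$: writing
\begin{equation*}
	p_s(y-x)-p_s(y) = -\int_0^1 x\cdot(\nabla p_s)(y-tx)\,\d t,
\end{equation*}
the triangle inequality gives $|p_s(y-x)-p_s(y)|\le \|x\|\int_0^1 \|(\nabla p_s)(y-tx)\|\,\d t$. Integrating in $y$ and applying Fubini (the integrand is nonnegative) together with translation invariance yields
\begin{equation*}
	\int_{\R^d}|p_s(y-x)-p_s(y)|\,\d y \le \|x\|\cdot\int_{\R^d}\|(\nabla p_s)(y)\|\,\d y.
\end{equation*}

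Now $\nabla p_s(y) = -(y/(\varkappa s))\,p_s(y)$, so $\|(\nabla p_s)(y)\| = \|y\|\,p_s(y)/(\varkappa s)$, and a change of variables $y=\sqrt{\varkappa s}\,z$ gives
\begin{equation*}
	\int_{\R^d}\|(\nabla p_s)(y)\|\,\d y = \frac{1}{\varkappa s}\cdot\E\|\sqrt{\varkappa s}\,Z\| = \frac{c_d}{\sqrt{\varkappa s}},
\end{equation*}
where $Z$ is a standard $d$-dimensional Gaussian and $c_d := \E\|Z\|<\infty$ depends only on $d$. Combining this with the trivial $L^1$ bound of $2$ proves the lemma with an absolute constant depending only on $d$. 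There is essentially no obstacle here; the only thing to watch is that the implicit constant is dimensional but independent of $(s\,,x\,,\varkappa)$ in the combined scale-invariant form $\|x\|/\sqrt{\varkappa s}$, which is clear from the computation.
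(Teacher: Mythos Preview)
Your proof is correct and follows essentially the same strategy as the paper: bound the difference via the derivative using $\nabla p_s(y)=-(y/\varkappa s)\,p_s(y)$, integrate, and combine with the trivial $L^1$ bound of $2$. The only cosmetic difference is that the paper first treats $d=1$ and then passes to general $d$ by a telescoping argument along coordinate directions (using the product structure of the heat kernel), whereas you work directly in $\R^d$ with the full gradient; your route is slightly more streamlined but the underlying idea is identical.
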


\begin{proof}
	For $s$ fixed,
	let us define 
	\begin{equation}
		\mu_d(r) = \mu_d(r\,;s) := \sup_{\substack{z\in\R^d\\ \|z\|\le r}}
		\int_{\R^d} | p_s(y-z)-p_s(y)  |\,\d y
		\quad\text{ for all $r>0$}.
	\end{equation}
	First consider the case that $d=1$. In that case, we may use the
	 differential equation $p'_s(w)=-(w/\varkappa s)p_s(w)$ in order to see that
	\begin{equation}\begin{split}
		\mu_1(|x|) &= \sup_{z\in(0,|x|)} \int_{-\infty}^\infty
			\left| \int_{y-z}^y p_s' (w)\,\d w\right|\,\d y\\
		&\le \frac{1}{\varkappa s} \sup_{z\in(0,|x|)} \int_{-\infty}^\infty\d y
			\int_{y-z}^y \d w\ |w| p_s (w) =
			\frac{|x|}{\varkappa s}\int_{-\infty}^\infty |w|p_s(w)\,\d w\\
		&=\sqrt{\frac{2}{\pi \varkappa s}}\,|x|\hskip1in \text{ for all $x\in\R$}.
	\end{split}\end{equation}
	For general $d$, we can integrate one coordinate at a time and then apply
	the triangle inequality to see that for all $x:=(x_1\,,\ldots,x_d)\in\R^d$,
	$\mu_d(\|x\|)\le\sum_{j=1}^d\mu_1(\|x\|)
	\le \sqrt{2/(\pi \varkappa s)}\,d\|x\|$.
	Because $|p_s(y-x)-p_s(y)|\le p_s(y-x)+p_s(y)$, we also have
	 $\mu_d(\|x\|)\le 2$.
\end{proof}

\begin{proposition}\label{pr:mod:crude}
	Let us assume that: (i) $\limsup_{\|x\|\to\infty}
	\|x\|^{-1}\log|u_0(x)|=-\infty$, (ii) $\sigma(0)=0$, and (iii) 
	$\int_0^1 s^{-a} (p_s*f)(0)\,\d s<\infty$ for some $a\in(0\,,\nicefrac12)$.
	Then for all $t>0$ and $k\in[2\,,\infty)$ there exists
	a constant  $C\in(1\,,\infty)$ such that uniformly for all $x,x'\in\R^d$
	that satisfy $\|x-x'\|\le 1$,
	\begin{equation}
		\E \left( |u_t(x)-u_t(x')|^k \right) \le C \exp\left(
		-\frac{\|x\|\wedge\|x'\|}{C}\right)\cdot\|x-x'\|^{ak/4}.
	\end{equation}
\end{proposition}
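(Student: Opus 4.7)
The plan is to establish two separate bounds on $\|u_t(x)-u_t(x')\|_k$ and interpolate via the elementary inequality $\min(A,B)\le\sqrt{AB}$: one bound giving exponential decay in $m:=\|x\|\wedge\|x'\|$, the other giving Hölder continuity in $\|x-x'\|$ of exponent $a$. Proposition \ref{pr:moment:decay} (applicable since (i) and (ii) hold) furnishes constants $C_1,c_1>0$ such that $\|u_t(y)\|_k\le C_1\e^{-c_1\|y\|}$ for every $y\in\R^d$, and inspection of its proof also yields the uniform bound $C_2:=\sup_{s\in[0,t],\,y\in\R^d}\|u_s(y)\|_k<\infty$. The triangle inequality then gives the decay estimate $\|u_t(x)-u_t(x')\|_k\le 2C_1\e^{-c_1 m}$.

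For the Hölder bound, I decompose $v:=u_t(x)-u_t(x')=v_{\rm det}+v_{\rm sto}$, where $v_{\rm det}:=(p_t*u_0)(x)-(p_t*u_0)(x')$ and $v_{\rm sto}$ is the difference of stochastic integrals with kernel $\Delta^{(s)}(y):=p_{t-s}(y-x)-p_{t-s}(y-x')$. Lemma \ref{p_bound} immediately gives $|v_{\rm det}|\le\text{const}\cdot\|u_0\|_{L^\infty(\R^d)}\|x-x'\|$. For $v_{\rm sto}$, \textnormal{(BDG)} followed by Minkowski's integral inequality and Cauchy--Schwarz (using $|\sigma(u)|\le\lip|u|$, a consequence of $\sigma(0)=0$) produces
\[
\|v_{\rm sto}\|_k^2\le 4k\lip^2\int_0^t\iint_{\R^{2d}}|\Delta^{(s)}(y)||\Delta^{(s)}(z)|f(y-z)\|u_s(y)\|_k\|u_s(z)\|_k\,\d y\,\d z\,\d s.
\]
Substituting $\|u_s(\cdot)\|_k\le C_2$, using $|\Delta^{(s)}(z)|\le p_{t-s}(z-x)+p_{t-s}(z-x')$ together with the positive-definiteness of $p_{t-s}*f$ (so that $(p_{t-s}*f)(w)\le(p_{t-s}*f)(0)$) to obtain $\int|\Delta^{(s)}(z)|f(y-z)\,\d z\le 2(p_{t-s}*f)(0)$, and invoking Lemma \ref{p_bound} combined with the elementary inequality $\theta\wedge 1\le \theta^{2a}$ (valid since $2a\in(0,1)$) to bound $\|\Delta^{(s)}\|_{L^1(\R^d)}\le\text{const}\cdot(\|x-x'\|/\sqrt{\varkappa(t-s)})^{2a}$, I arrive at
\[
\|v_{\rm sto}\|_k^2\le C_3\|x-x'\|^{2a}\int_0^t(t-s)^{-a}(p_{t-s}*f)(0)\,\d s.
\]
The last integral is finite by hypothesis (iii) (the contribution from $s\ge 1$ is controlled by boundedness of $(p_s*f)(0)$ there, which follows from Dalang's condition $(R_\beta f)(0)<\infty$). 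Hence $\|v_{\rm sto}\|_k\le C_4\|x-x'\|^a$; since $\|x-x'\|\le 1$ and $a<1$ imply $\|x-x'\|\le\|x-x'\|^a$, the triangle inequality yields $\|v\|_k\le C_5\|x-x'\|^a$.

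Finally, combining via $\|v\|_k\le\min(2C_1\e^{-c_1m},\,C_5\|x-x'\|^a)\le\sqrt{2C_1 C_5}\,\|x-x'\|^{a/2}\e^{-c_1 m/2}$ and raising to the $k$-th power (noting $\|x-x'\|^{ak/2}\le\|x-x'\|^{ak/4}$ when $\|x-x'\|\le 1$) delivers the claim for a suitable constant $C$. The main technical obstacle is the Hölder-in-$x$ estimate for $v_{\rm sto}$: the delicate interplay between the $L^1$-smallness of heat-kernel differences from Lemma \ref{p_bound}, the positive-definiteness bound on $p_{t-s}*f$, and the hypothesized local integrability of $s\mapsto s^{-a}(p_s*f)(0)$ is the crux; the $\min$-interpolation step is then routine.
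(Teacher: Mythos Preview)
Your proof is correct and follows essentially the same route as the paper: the H\"older-continuity estimate for $v_{\rm sto}$ is derived identically (BDG, the positive-definiteness bound $(p_{t-s}*f)(w)\le(p_{t-s}*f)(0)$, Lemma~\ref{p_bound}, and hypothesis~(iii) via $\theta\wedge 1\le\theta^{2a}$), and both proofs invoke Proposition~\ref{pr:moment:decay} for the exponential decay. The only cosmetic difference is the interpolation: the paper writes $\E(|v|^k)\le\E(|v|^{k/2}(|u_t(x)|+|u_t(x')|)^{k/2})$ and applies Cauchy--Schwarz, whereas you use $\min(A,B)\le\sqrt{AB}$ on $\|v\|_k$; these are equivalent devices yielding the same bound.
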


\begin{proof}
	First of all, we note that
	\begin{align}\notag
		\left| (p_t*u_0)(x) - (p_t*u_0)(x') \right|
			&\le \|u_0\|_{L^\infty(\R^d)}\cdot
			\int_{\R^d} \left| p_t(y-x)-p_t(y-x')\right|
			\,\d y\\
		&\le\textnormal{const}\cdot\|x-x'\|; \label{eq:cont_ic}
	\end{align}
	see Lemma  \ref{p_bound}. 
	Now we may use this estimate and
	the same argument that led us to \eqref{4.24} in order
	to deduce that for all $\ell\in[2\,,\infty)$,
	\begin{align}
		&\| u_t(x)-u_t(x')\|_\ell^2 
			\le\textnormal{const}\cdot \|x-x'\|^2\\\notag
		&\hskip1.2in+
			\textnormal{const}\cdot \int_0^t\d s\int_{\R^d}\d y\int_{\R^d}
			\d z\ f(y-z)\,\mathcal{A}\,\mathcal{B}_s(y)\,\mathcal{B}_s(z),
	\end{align}
	where
	\begin{equation}\begin{split}
		&\mathcal{A} :=
			\mathcal{A}_s(y\,,z) := \left\| u_s(y)\cdot u_s(z) \right\|_{\ell/2}^2,
			\qquad\text{and}\\
		&\mathcal{B}_s(w) := \left|  p_{t-s}(w-x) - p_{t-s}(w-x') \right|
			\qquad\text{for all $w\in\R^d$}.
	\end{split}\end{equation}
	According to \cite{Dalang:99}, $\sup_{s\in[0,T]}\sup_{y,z\in\R^d}
	\mathcal{A}<\infty$. On the other hand,
	$(\mathcal{B}_s*f)(z) \le 2\sup_{w\in\R^d} (p_{t-s}*f)(w)$, and the latter
	quantity is equal to $2(p_{t-s}*f)(0)$ since $p_r*f$ is positive definite and
	continuous for all $r>0$ [whence is maximized at the origin]. We can
	summarize our efforts as follows:
	\begin{align}
		&\|u_t(x)-u_t(x') \|_\ell^2\\\notag
		&\le \textnormal{const}\cdot \|x-x'\|^2
			+ \textnormal{const}\cdot \int_0^t(p_s*f)(0)\,\d s 
			\int_{\R^d}\d z\ \left|  p_s(z-x) - p_s(z-x') \right|\\\notag
		&\le \textnormal{const}\cdot \|x-x'\|^2
			+\textnormal{const}\cdot \int_0^t  (p_s*f)(0)\left(
			\frac{\|x-x'\|}{\sqrt s}\wedge 1\right)\,\d s;
	\end{align}
	see Lemma \ref{p_bound} below, for instance. We remark that the  implied
	constants do not depend on $(x\,,x')$. Since
	$r\wedge 1 \le r^{2a}$ for all $r>0$, it follows that
	\begin{equation}\label{eq:u:ell}
		\|u_t(x)-u_t(x')\|_\ell\le \textnormal{const}\cdot  \|x-x'\|^{a/2},
	\end{equation}
	where the implied constant does not depend on $(x\,,x')$ as long
	as $\|x-x'\|\le 1$ [say].
	Next we write
	\begin{equation}\begin{split}
		&\E\left( |u_t(x)-u_t(x')|^k\right)\\
		& \hskip.5in\le \E\left( |u_t(x)-u_t(x')|^{k/2}\cdot 
			\left\{|u_t(x)|+|u_t(x')|\right\}^{k/2}\right)\\
		& \hskip.5in\le \textnormal{const}\cdot
			\left\| u_t(x)-u_t(x') \right\|_k^{k/2} \left(\|u_t(x)\|_k^{k/2}
			\vee \|u_t(x')\|_k^{k/2} \right),
	\end{split}\end{equation}
	by H\"older's inequality. Proposition \ref{pr:moment:decay}
	and Eq.\ \eqref{eq:u:ell} together complete our proof.
\end{proof}

Proposition \ref{pr:mod:crude} and a quantitative form of Kolmogorov's
continuity lemma \cite[pp.\ 10--12]{Minicourse} readily imply the following.

\begin{corollary}\label{co:mod:crude}
	Let us assume that: (i) $\limsup_{\|x\|\to\infty}
	\|x\|^{-1}\log|u_0(x)|=-\infty$, (ii) $\sigma(0)=0$, and (iii) 
	$\int_0^1 s^{-a} (p_s*f)(0)\,\d s<\infty$ for some $a\in(0\,,\nicefrac12)$.
	Then for all $t>0$ and $k\in[2\,,\infty)$ there exists
	a constant  $C\in(1\,,\infty)$ such that uniformly for all hypercubes
	$T\subset\R^d$ of sidelength $2/\sqrt{d}$,
	\begin{equation}
		\E \left( \sup_{x,x'\in T}|u_t(x)-u_t(x')|^k \right) \le C \exp\left(
		-\frac1C \inf_{z\in T}\|z\|\right).
	\end{equation}
\end{corollary}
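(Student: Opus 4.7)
The plan is to apply a quantitative version of Kolmogorov's continuity lemma to the pointwise moment estimate of Proposition \ref{pr:mod:crude}, after first raising the integrability exponent $k$ enough that the H\"older exponent $ak/4$ strictly exceeds the ambient dimension $d$.

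First I fix $k_0\ge 2$ large enough that $ak_0/4>d$; this is possible because $a>0$ is a fixed constant. Proposition \ref{pr:mod:crude} then provides, for all $x,x'\in\R^d$ with $\|x-x'\|\le 1$,
$$\E\left(|u_t(x)-u_t(x')|^{k_0}\right)\le C\exp\left(-\frac{\|x\|\wedge\|x'\|}{C}\right)\|x-x'\|^{ak_0/4}.$$
Since the hypercube $T$ has Euclidean diameter $2$, I extend this estimate to the range $\|x-x'\|\le 2$: for $\|x-x'\|\in(1,2]$ the triangle inequality together with the moment decay of Proposition \ref{pr:moment:decay} gives $\E(|u_t(x)-u_t(x')|^{k_0})\le C'\exp(-(\|x\|\wedge\|x'\|)/C')$, and since $\|x-x'\|^{ak_0/4}\in[1,2^{ak_0/4}]$ in this range one can reinsert this factor on the right-hand side at the cost of enlarging the constant.

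Next I invoke the quantitative form of Kolmogorov's continuity lemma stated in \cite[pp.\ 10--12]{Minicourse}, applied to the continuous random field $x\mapsto u_t(x)$ restricted to $T$. Because $ak_0/4>d$ and $\mathrm{diam}\,T=2$ is fixed, the lemma produces a bound of the form
$$\E\left(\sup_{x,x'\in T}|u_t(x)-u_t(x')|^{k_0}\right)\le C''\sup_{x,x'\in T}\frac{\E(|u_t(x)-u_t(x')|^{k_0})}{\|x-x'\|^{ak_0/4}},$$
where $C''$ depends only on $k_0,d,a$. Bounding $\|x\|\wedge\|x'\|\ge\inf_{z\in T}\|z\|$ uniformly over $x,x'\in T$ and inserting the pointwise estimate from the previous step yields the desired inequality for $k=k_0$. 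For $k\in[2,k_0)$ I obtain the claim by Lyapunov's inequality applied to $Z:=\sup_{x,x'\in T}|u_t(x)-u_t(x')|$, which preserves exponential decay in $\inf_{z\in T}\|z\|$ at the cost only of enlarging the constant; for $k\ge k_0$ I repeat the argument with $k$ in place of $k_0$.

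The one technical point to watch is that the form of Kolmogorov's lemma invoked must bound the supremum-moment \emph{linearly} in the prefactor of the pointwise moment estimate, so that the exponential factor $\exp(-\inf_{z\in T}\|z\|/C)$ is carried through intact; this linear scaling is precisely what the quantitative version in \cite{Minicourse} is designed to provide, so no additional work is needed beyond verifying that its hypotheses hold in our setting.
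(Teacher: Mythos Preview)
Your proof is correct and follows exactly the approach the paper takes: the paper's proof consists of a single sentence invoking Proposition \ref{pr:mod:crude} together with the quantitative Kolmogorov continuity lemma from \cite[pp.\ 10--12]{Minicourse}. You have simply filled in the details the paper omits---choosing $k_0$ large enough that $ak_0/4>d$, extending the two-point estimate from $\|x-x'\|\le 1$ to the full diameter $2$ of $T$ via Proposition \ref{pr:moment:decay}, noting that the Kolmogorov bound scales linearly in the prefactor, and handling general $k$ by Lyapunov's inequality or by rerunning the argument---all of which are routine and correctly executed.
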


Finally, we are in position to establish Theorem \ref{th:boundedness}.

\begin{proof}[Proof of Theorem \ref{th:boundedness}]
	Define
	\begin{equation}
		T(x) := \left\{ y\in\R^d:\, \max_{1\le j\le d}
		|x_j-y_j|\le \frac{2}{\sqrt{d}}\right\}\quad
		\text{for every $x\in\R^d$}.
	\end{equation}
	Then, for all $t>0$ and $k\in[2\,,\infty)$,
	there exists a constant $c\in(0\,,1)$ such that
	uniformly for every $x\in\R^d$,
	\begin{align}\notag
		\E\left( \sup_{y\in T(x)}|u_t(y)|^k\right)
			&\le 2^{k}\left\{ \E\left( |u_t(x)|^k\right)
			+\E\left(\sup_{y\in T(x)}
			|u_t(y)-u_t(x)|^k\right)\right\}\\
		&\le\frac 1c\cdot\left\{ \e^{-c\|x\|} + 
			\exp\left( -c \inf_{y\in T(x)}\|y\|\right)\right\};
	\end{align}
	see Proposition \ref{pr:moment:decay} and Corollary 
	\ref{co:mod:crude}. Because $\inf_{y\in T(x)}\|y\|\ge\|x\|-1$
	for all $x\in\Z^d$, the preceding is
	bounded by $\textnormal{const}\cdot\exp(-\textnormal{const}
	\cdot\|x\|)$, whence $\E( \sup_{z\in\R^d}|u_t(z)|^k)
	\le \sum_{x\in\Z^d} \E( \sup_{y\in T(x)}|u_t(y)|^k)$ is finite.
\end{proof}

\section{Proof of Theorem \ref{th:sigma:bdd}}
Throughout this section, we assume
that $f=h*\tilde{h}$ for some nonnegative function $h\in L^2(\R^d)$
that satisfies \eqref{eq:h:a}. Moreover, we let $u$ denote the solution to \textnormal{(SHE)}.

\subsection{The first part}

Here and throughout we define for all $R,t>0$
\begin{equation}\label{eq:u*}
	u_t^{*}(R) := \sup_{\|x\|\le R}|u_t(x)|.
\end{equation}
As it turns out, it is easier to prove slightly stronger statements
than \eqref{eq:sigma:bdd1} and \eqref{eq:sigma:bdd2}. 
The following is the stronger version of \eqref{eq:sigma:bdd1}.

\begin{proposition}\label{prop2}
	If $\sigma $ is bounded uniformly  away from zero, then
	\begin{equation}
		\liminf_{R\to \infty}\frac{u_t^{*}(R) }{(\log R)^{\nicefrac14}}>0 
		\qquad\mbox{a.s.}
	\end{equation}
\end{proposition}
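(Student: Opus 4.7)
The plan is to exploit the near-independence provided by the Picard--localization scheme of Section \ref{local}: I will find many well-separated points in the ball $\{\|x\|\le R\}$ at each of which $|u_t|$ is, with non-negligible probability, of order $(\log R)^{1/4}$, and then conclude via Borel--Cantelli that at least one of them actually achieves this level for all sufficiently large $R$. Concretely, with $R$ fixed, I set $\lambda := c(\log R)^{1/4}$ for small $c>0$, $\beta := \exp(C\sqrt{\log R})$ for large $C>0$, and $n := [\log\beta]+1$. I place points $x^{(1)},\ldots,x^{(N_R)}$ on a grid in $[-R/\sqrt d,R/\sqrt d]^d$ with spacing $2n\beta(1+\sqrt t)$, so that $\|x^{(j)}\|\le R$ for all $j$ and $N_R \ge \text{const}\cdot (R/(n\beta))^d = R^{d-o(1)}$, the last estimate using that $\beta$ is subpolynomial in $R$. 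By Lemma \ref{lem:un-ind}, the random variables $U_t^{(\beta,n)}(x^{(j)})$ are then mutually independent.

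The second step is to transfer the tail lower bound of Lemma \ref{tail_bound0} from $u_t$ to $U_t^{(\beta,n)}$. From Lemma \ref{lem:u-Ubl} and Markov's inequality,
\[
\P\bigl\{|u_t(x)-U_t^{(\beta,n)}(x)| > \eta\bigr\} \le \eta^{-k}\,G^k k^{k/2}\exp(Fk^2)/\beta^{kb/2},
\]
and optimizing in $k$ (choosing $k \asymp b\log\beta/(4F)$) makes the right-hand side at most $R^{-\gamma(C)+o(1)}$ for $\eta = \lambda$, where $\gamma(C)\to\infty$ as $C\to\infty$. Combining with Lemma \ref{tail_bound0} applied at $2\lambda$,
\[
\P\bigl\{|U_t^{(\beta,n)}(x)| \ge \lambda\bigr\} \ge \P\bigl\{|u_t(x)|\ge 2\lambda\bigr\} - R^{-\gamma(C)+o(1)} \ge \tfrac{1}{2\sqrt{a}}\,R^{-16ac^4},
\]
provided $C$ is taken large enough that the correction is dominated by the main term. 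By independence,
\[
\P\Bigl\{\max_j |U_t^{(\beta,n)}(x^{(j)})| < \lambda\Bigr\} \le \exp\bigl(-N_R\cdot \tfrac{1}{2\sqrt a}R^{-16ac^4}\bigr) \le \exp\bigl(-R^{d-16ac^4-o(1)}\bigr),
\]
and fixing $c$ so small that $16ac^4 < d$ makes this summable along any geometric subsequence $R_k := 2^k$. A parallel union bound controls
$\P\{\exists j:|u_t(x^{(j)})-U_t^{(\beta,n)}(x^{(j)})| \ge \lambda/2\} \le N_R\cdot R^{-\gamma(C)+o(1)}$,
which is summable over $R_k$ once $C$ is chosen so that $\gamma(C) > d+1$. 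Borel--Cantelli then yields that a.s., for all large $k$, some $j$ satisfies $|u_t(x^{(j)})|\ge \lambda/2 = (c/2)(\log R_k)^{1/4}$, hence $u_t^*(R_k) \ge (c/2)(\log R_k)^{1/4}$; monotonicity of $R\mapsto u_t^*(R)$ together with $(\log R_{k+1})/(\log R_k)\to 1$ upgrades this to the liminf statement for general $R$.

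The main obstacle is the delicate balancing of parameters in the passage from Lemma \ref{lem:u-Ubl} to a usable tail bound for $U_t^{(\beta,n)}$: the Picard--localization error must be driven below the one-point lower bound $R^{-16ac^4}$, which forces $\beta$ to grow with $R$, in turn inflating the separation $n\beta$ and shrinking the number $N_R$ of independent points in the ball of radius $R$. The choice $\beta = \exp(C\sqrt{\log R})$ is the sweet spot: it is subpolynomial so $N_R$ remains $R^{d-o(1)}$, while yielding $\log\beta \asymp \sqrt{\log R}$, which is large enough that the optimized bound $\exp(Fk^2-(kb/2)\log\beta)$ beats any power $R^{-\text{const}}$ once $C$ is large. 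The exponent $1/4$ in the conclusion is dictated precisely by the $\exp(-a\lambda^4)$ tail of Lemma \ref{tail_bound0}: taking $\lambda^4 \propto \log R$ is exactly what makes the exceedance probability polynomially small in $R$, so that it competes favorably with the $R^d$ points available.
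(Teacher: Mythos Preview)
Your proof is correct and follows the same overall strategy as the paper: use the localization of Section~\ref{local} to approximate $u_t(x^{(j)})$ by the independent variables $U_t^{(\beta,n)}(x^{(j)})$, transfer the tail lower bound of Lemma~\ref{tail_bound0}, and conclude by independence and Borel--Cantelli along a subsequence plus monotonicity.

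The only real difference is in the parameter tuning, and here the paper's choice is simpler. The paper takes $\beta := R^{1-q}/\log R$ for a fixed $q\in(0,1)$, so that $\beta$ is polynomial in $R$; then for any \emph{fixed} integer $k$ the error bound of Lemma~\ref{lem:u-Ubl} already gives $\text{const}(k)\cdot\beta^{-kb/2}\asymp R^{-kb(1-q)/2}$, a power of $R$ that can be made as negative as desired by enlarging $k$ once. This avoids the optimization over $k$ entirely. Correspondingly the paper has only $N=\lceil R^q\rceil^d$ independent points, but that is still enough since one only needs $2(3\theta)^4 a(\varkappa)<qd$ for the product $(1-R^{-2(3\theta)^4 a})^N$ to vanish. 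Your choice $\beta=\exp(C\sqrt{\log R})$ is subpolynomial, which buys you $N_R=R^{d-o(1)}$ points but forces $k\asymp\sqrt{\log R}$ to grow with $R$ in order to extract a power decay from Lemma~\ref{lem:u-Ubl}, and this in turn makes the $\e^{Fk^2}$ factor dangerous and necessitates the balancing you describe. Both routes work; the paper's is shorter.
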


\begin{proof}
	Let us introduce a free parameter $N\ge1$, which is an integer
	that we will select carefully later on in the proof. 
	
	As before, let us denote $n=[\log \beta]+1$. For all 
	$\theta,R>0$ and $x^{(1)},x^{(2)},\cdots,x^{(N)} \in \R^d$, we may write
	\begin{align}\notag
		&\P\left\{\max_{1\le j\le N} |u_t(x^{(j)})| < 
			\theta (\log R)^{\nicefrac14}\right\} 
			\le \P\left\{\max_{1\le j\le N} |U^{(\beta,n)}_t(x^{(j)})| < 
			2\theta (\log R)^{\nicefrac14} \right\}\\
		&\hskip1.3in+ 
			\P\left\{\max_{1\le j\le N} |u_t(x^{(j)})-U^{(\beta,n)}_t(x^{(j)})| >\theta 
			(\log R)^{\nicefrac14}\right\}.\label{eq0}
	\end{align}
	We bound these quantities in order.
	
	Suppose in addition that $D(x^{(i)}\,,x^{(j)}) \ge 2n\beta(1+\sqrt{t})$ 
	whenever $i\ne j$, where $D(x\,,y)$ was defined in \eqref{d}. 
	Because of Lemma \ref{lem:un-ind}, the collection
	$\{U^{(\beta,n)}_t(x_j)\}_{j=1}^N$ is comprised of 
	independent random variables. 
	Consequently,
	\begin{equation}\begin{split}
		&\P\left\{ \max_{1\le j\le N}|U^{(\beta,n)}_t(x^{(j)})| 
			<2\theta(\log R)^{\nicefrac14}\right\}\\
		&\hskip.5in\le \left(
			\P\left\{ \left\vert U^{(\beta,n)}_t(x^{(1)})\right\vert <2\theta(\log R)^{\nicefrac14}
			\right\}\right)^N
			\le \left(\mathcal{T}_1 + \mathcal{T}_2\right)^N,
	\end{split}\end{equation}
	where
	\begin{equation}\begin{split}
		\mathcal{T}_1 &:= \sup_{x\in\R^d}
			\P\left\{\left\vert u_t(x)\right\vert < 3\theta(\log R)^{\nicefrac14}
			\right\},\\
		\mathcal{T}_2 &:= \sup_{x \in\R^d}
			\P\left\{ | u_t(x)- U^{(\beta,n)}_t(x)| >
			\theta(\log R)^{\nicefrac14}\right\}.
	\end{split}\end{equation}
	According to Lemma \ref{tail_bound0},
	$\mathcal{T}_1 \le 1 - a(\varkappa)^{-\frac12}R^{-2(3\theta)^4 a(\varkappa)}$
	for all $R$ sufficiently large;
	and Lemma \ref{lem:u-Ubl} implies that there exists a finite
	constant $m\ge 1$ such that uniformly for all $k,\beta\ge m$,
	$\mathcal{T}_2 \le G^k k^{k/2}\e^{Fk^2}/(\theta^k
	\beta^{kb/2}(\log R)^{k/4}) \le c_1(k) \beta^{-kb/2}(\log R)^{-k/4}$
	for a finite and positive constant $c_1(k):=c_1(k\,,G\,,F\,,\theta)$.
	We combine the preceding to find that
	\begin{equation}
		\left( \mathcal{T}_1+\mathcal{T}_2\right)^N
		\le \left( 1 - \frac{a(\varkappa)^{-\frac12}}{R^{2(3\theta)^4 a(\varkappa)}}
		+ \frac{c_1(k)}{ \beta^{kb/2}}\right)^N,
	\end{equation}
	uniformly for all $k,\beta\ge m$. Because the left-hand
	side of \eqref{eq0} is bounded above by 
	$(\mathcal{T}_1+\mathcal{T}_2)^N+N\mathcal{T}_2$, it follows that
	\begin{align}
		&\P\left\{ \max_{1\le j\le N}|u_t(x^{(j)})| 
			<\theta(\log R)^{\nicefrac14}\right\}\\\notag
		&\hskip1.5in\le \left( 1 - \frac{a(\varkappa)^{-\frac12}}{R^{2(3\theta)^4 a(\varkappa)}}
			+ \frac{c_1(k)}{\beta^{kb/2}}\right)^N 
			+\frac{c_1(k)N}{\beta^{kb/2}},
			\label{eq1}
	\end{align}
	
	Now we choose the various parameters as follows:
	We choose $N := \lceil R^q\rceil^d$ and $\beta :=  R^{1-q}/\log R$,
	where $q\in(0\,,1)$ is fixed, and let $k \geq 2$ be the smallest integer
	so that $qd-\frac12 kb (1-q)<-2$ so that $N\beta^{-kb/2}\le R^{-2}$. 
	In a cube of side length $2 (1+\sqrt t)R$, there are at least $N$ points 
	separated by ``D-distance" $2n\beta (1+\sqrt t) $ where $n:=[\log\beta]+1$. 
	Also choose $\theta>0$ small enough 
	so that $(3\theta)^4 a(\varkappa) <q$.
	For these choices of parameters,
	an application of the Borel--Cantelli lemma [together with a monotonicity
	argument] implies that
	$\liminf_{R\to \infty} (\log R)^{-\nicefrac14}\,u_t^{*}(R) >0$ a.s.
	See \cite{CJK} for more details of this kind of argument in a similar setting.
\end{proof}

\subsection{The second part}

Similarly as in the proof of Theorem \ref{th:boundedness}, we will need a result on the modulus of continuity of $u$.

\begin{lemma}\label{mod_cont}
	If $\sup_{x\in\R}|\sigma(x)|<\infty$, then there exists
	a constant $C=C(t)\in(0\,,\infty)$ such that 
	\begin{equation}
		\E\left(\left\vert u_t(x)-u_t(x')\right\vert^{2k}\right) 
		\le \left(\frac{Ck}{\sqrt\varkappa}\right)^k\cdot \|x-x'\|^k,
	\end{equation}
	uniformly for all $x,x'\in\R^d$ that satisfy $\|x-x'\|\le 
	(t\varkappa)^{\nicefrac12}$.
\end{lemma}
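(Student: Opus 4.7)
The plan is to split $u_t(x)-u_t(x') = A + B$, where $A := (p_t*u_0)(x)-(p_t*u_0)(x')$ is deterministic and $B := \int_{(0,t)\times\R^d} q_s(y)\sigma(u_s(y))\,F^{(h)}(\d s\,\d y)$ is the stochastic part, with $q_s(y) := p_{t-s}(y-x)-p_{t-s}(y-x')$. Each term will be estimated separately in $L^{2k}(\P)$, and the triangle inequality will combine them.

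For the stochastic part, fix $\tau=t$ and view $M_s := \int_{(0,s)\times\R^d} q_r(y)\sigma(u_r(y))\,F^{(h)}(\d r\,\d y)$ for $s\in[0,\tau]$ as a continuous $L^2(\P)$ martingale whose terminal value is $B$. Applying \textnormal{(BDG)} gives $\|B\|_{2k}^{2k}\le(8k)^k\|\langle M\rangle_\tau\|_k^k$, so I only need a deterministic upper bound on the quadratic variation
\begin{equation*}
	\langle M\rangle_\tau = \int_0^\tau\d s\iint_{\R^d\times\R^d}q_s(y)q_s(z)\sigma(u_s(y))\sigma(u_s(z))f(y-z)\,\d y\,\d z.
\end{equation*}
Since $|\sigma|\le M_\sigma := \|\sigma\|_\infty$ and $f(y-z)\le f(0)$ by positive-definiteness, a crude bound yields $\langle M\rangle_\tau \le M_\sigma^2 f(0)\int_0^t\|q_s\|_{L^1(\R^d)}^2\,\d s$. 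This is now entirely deterministic.

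The crux is to estimate $\int_0^t\|q_s\|_{L^1}^2\,\d s$ without a logarithmic loss. Lemma \ref{p_bound} gives $\|q_s\|_{L^1}\le C\bigl(\|x-x'\|/\sqrt{\varkappa(t-s)}\wedge 1\bigr)$. The key trick is the elementary interpolation inequality $(a\wedge 1)\le \sqrt a$ for $a>0$, which turns the $L^1$-bound into $\|q_s\|_{L^1}\le C\,(\|x-x'\|/\sqrt{\varkappa(t-s)})^{1/2}$. Squaring and integrating,
\begin{equation*}
	\int_0^t\|q_s\|_{L^1}^2\,\d s \le C^2\,\frac{\|x-x'\|}{\sqrt\varkappa}\int_0^t\frac{\d s}{\sqrt{t-s}} = 2C^2\sqrt t\cdot\frac{\|x-x'\|}{\sqrt\varkappa}.
\end{equation*}
Plugging this back into \textnormal{(BDG)} gives $\|B\|_{2k}^{2k}\le (C'(t)k)^k\,\|x-x'\|^k/\varkappa^{k/2}$, exactly matching the target bound. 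Without this interpolation the naive estimate would produce a factor $\|x-x'\|^2(1+\log(\varkappa t/\|x-x'\|^2))/\varkappa$, which is too weak; this is the main obstacle of the proof.

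The deterministic part is handled along the same lines: $|A|\le\|u_0\|_\infty\int|p_t(y-x)-p_t(y-x')|\,\d y$, and Lemma \ref{p_bound} together with $\min(a,1)\le\sqrt a$ gives $|A|\le C\|u_0\|_\infty\,(\|x-x'\|/\sqrt{\varkappa t})^{1/2}$. Raising to the $2k$-th power produces a bound of the form $C(t)^k\|x-x'\|^k/\varkappa^{k/2}$, which is absorbed into the stochastic estimate. Combining the two contributions via $(a+b)^{2k}\le 2^{2k}(a^{2k}+b^{2k})$ yields the stated inequality with $C=C(t)$ independent of $\varkappa$, $x$, $x'$, and $k$.
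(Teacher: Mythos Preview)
Your proof is correct and follows essentially the same approach as the paper. The paper's proof invokes \eqref{eq:cont_ic} for the deterministic term and \textnormal{(BDG)} together with the bound $|f|\le f(0)$ for the stochastic term, arriving at $\|u_t(x)-u_t(x')\|_{2k}\le\textnormal{const}\cdot\|x-x'\|+2S_0\sqrt{2kf(0)\,Q_t(x-x')}$ with $Q_t(w)=\int_0^t\bigl(\int_{\R^d}|p_{t-s}(y-w)-p_{t-s}(y)|\,\d y\bigr)^2\,\d s$, and then simply remarks that ``Lemma~\ref{p_bound} and a small computation'' give $Q_t(w)\le\textnormal{const}\cdot\|w\|\sqrt{t/\varkappa}$ for $\|w\|\le(t\varkappa)^{1/2}$; your interpolation $(a\wedge 1)\le\sqrt a$ is precisely that small computation made explicit. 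The only cosmetic difference is that the paper uses the linear bound from \eqref{eq:cont_ic} for the deterministic piece rather than the square-root interpolation, but both are dominated by the desired right-hand side once $\|x-x'\|\le(t\varkappa)^{1/2}$.
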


\begin{proof}
	Let $S_0:=\sup_{z\in\R}|\sigma(z)|$. Because
	$|f(z)|\le f(0)$ for all $z\in\R^d$, 
	the optimal form of the Burkholder--Davis--Gundy inequality
	\textnormal{(BDG)} and \eqref{eq:cont_ic} imply that
	\begin{equation}
		\left\|  u_t(x)-u_t(x')\right\|_{2k}
		\le \text{const} \cdot \|x-x'\| + 2S_0\sqrt{2kf(0)\,Q_t(x-x')},
	\end{equation}
	where
	\begin{equation}
		Q_t(w) := \int_0^t\d s\left(\int_{\R^d}\d y\
		\left| p_{t-s}(y-w)-p_{t-s}(y)\right|\right)^2
		\quad\text{for  $w\in\R^d$}.
	\end{equation}
	Lemma \ref{p_bound} and a small computation implies readily that
	$Q_t(w) \le\textnormal{const}\cdot \|w\| \sqrt{ t /\varkappa}$ 
	whenever $\|w\|\le (t\varkappa)^{\nicefrac12}$; and the lemma
	follows  from these observations.
\end{proof}

\begin{lemma}\label{exp_bound} 
	Choose and fix $t>0$, and suppose that
	$\sigma$ is  bounded. Then there exists a constant $C\in (0\,,\infty)$ such that 
	\begin{equation}
		\E\left[\sup_{\substack{x,x'\in T: \\\|x-x'\|\le \delta}}
		\exp\left(\frac{\sqrt\varkappa |u_t(x)-u_t(x')|^2}{C\delta}\right)
		\right]\le \frac{2}{\delta},
	\end{equation}
	uniformly for every $\delta\in (0\,,(t\varkappa)^{\nicefrac12}]$ 
	and every cube $T\subset \R^d$ of side length at most $1$.
\end{lemma}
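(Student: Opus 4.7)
The plan is to derive Lemma \ref{exp_bound} from a $d$-dimensional Garsia--Rodemich--Rumsey (GRR) argument, using the moment estimate of Lemma \ref{mod_cont} as the sole input. The first step is to convert the $L^{2k}$ bound of Lemma \ref{mod_cont} into a single exponential one: expanding $\exp(\lambda z^2)$ as a power series, applying Lemma \ref{mod_cont} term by term, and invoking $k^k/k!\le \e^k$ (Stirling) reveals that for some small constant $\alpha>0$,
\[
\E\exp\left(\frac{\alpha\sqrt\varkappa|u_t(x)-u_t(x')|^2}{\|x-x'\|}\right)\le 2,
\]
uniformly for all $x,x'\in \R^d$ with $0<\|x-x'\|\le (t\varkappa)^{\nicefrac12}$.

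Because this bound applies only at scales $\le (t\varkappa)^{\nicefrac12}$, I would cover the given cube $T$ by a finite family of overlapping sub-cubes $T_1,\ldots,T_N$, each of side length $\le (t\varkappa)^{\nicefrac12}/\sqrt d$ and arranged so that every pair $x,x'\in T$ with $\|x-x'\|\le\delta$ is contained in at least one $T_i$; the number $N$ depends only on $(t\,,\varkappa\,,d)$. On a fixed sub-cube $T_i$, I would apply GRR with $\Psi(u):=\e^{u^2}-1$ and $p(u):=\sqrt{u/(\alpha\sqrt\varkappa)}$. Setting
\[
B:=\iint_{T_i\times T_i}\Psi\!\left(\frac{|u_t(x)-u_t(x')|}{p(\|x-x'\|)}\right)\d x\,\d x',
\]
the displayed exponential bound gives $\E B\le |T_i|^2 \le \textnormal{const}(t\,,\varkappa\,,d)$, while GRR yields
\[
|u_t(x)-u_t(x')|\le\frac{C_1}{\varkappa^{\nicefrac14}}\int_0^{\delta}\sqrt{\log\!\left(1+\frac{4^{d+1}B}{u^{2d}}\right)}\,\frac{\d u}{\sqrt u}
\]
for all $x,x'\in T_i$ with $\|x-x'\|\le\delta$.

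To control the integral I would split the logarithm by means of $\log(1+xy)\le\log(1+x)+\log(1+y)$ and $\sqrt{a+b}\le\sqrt a+\sqrt b$, reducing matters to the elementary integrals $\int_0^\delta u^{-1/2}\d u$ and $\int_0^\delta u^{-1/2}\sqrt{\log(1/u)}\,\d u$, both of order $\sqrt\delta$ times logarithmic factors. The resulting pointwise bound is
\[
\sup_{\substack{x,x'\in T_i\\\|x-x'\|\le\delta}}|u_t(x)-u_t(x')|^2\le\frac{C_2\delta}{\sqrt\varkappa}\left(\log\frac{1}{\delta}+\log B+C_3\right),
\]
and exponentiating with the matching coefficient $C=C_2$ converts the $\log(1/\delta)$ summand into precisely the $1/\delta$ prefactor:
\[
\exp\!\left(\frac{\sqrt\varkappa\sup|u_t(x)-u_t(x')|^2}{C_2\delta}\right)\le \frac{\e^{C_3}B}{\delta}.
\]
Taking expectations, summing over the $N$ sub-cubes, and enlarging the exponent constant to absorb the factor $N\e^{C_3}\sup_i\E B$ and to bring the right-hand side down to $2/\delta$ yields the statement.

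The main technical delicacy is the constant-matching step just described: if one exponentiates with a coefficient strictly larger than $C_2$, the $1/\delta$ prefactor weakens to $\delta^{-C_2/C}<\delta^{-1}$; strictly smaller, and the bound diverges. A secondary point is that Lemma \ref{mod_cont} yields subgaussian control on increments only at scales $\le (t\varkappa)^{\nicefrac12}$, which forces the overlapping sub-cube decomposition in the second step and introduces dependence of $C$ on $(t\,,\varkappa\,,d)$---permissible since both $t$ and $\varkappa$ are regarded as fixed throughout the statement.
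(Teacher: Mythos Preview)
Your approach is correct and is essentially the standard route: the paper itself does not give a proof but defers to Lemma~6.2 of the companion work \cite{CJK}, which carries out precisely this kind of Garsia--Rodemich--Rumsey argument built on the moment estimate of Lemma~\ref{mod_cont}.

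Two minor remarks on your write-up. First, the ``constant-matching delicacy'' is overstated: exponentiating with any $C\ge C_2$ yields a bound of the form $\textnormal{const}\cdot\delta^{-C_2/C}\le\textnormal{const}/\delta$ for $\delta\le 1$, and the prefactor can then be brought below $2$ by further enlarging $C$ and invoking Jensen's inequality in the form $\E[\e^{Z/m}]\le(\E\e^Z)^{1/m}$; there is no knife-edge. Second, your overlapping cover needs overlap at least $\delta$ while keeping sub-cube diameter at most $(t\varkappa)^{1/2}$, which is only possible when $\delta$ is bounded by roughly $(t\varkappa)^{1/2}/\sqrt d$; to reach the full range $\delta\le(t\varkappa)^{1/2}$ allowed in the statement, note that the proof of Lemma~\ref{mod_cont} extends (with a modified constant) to $\|x-x'\|\le c(t\varkappa)^{1/2}$ for any fixed $c>1$, so one may use slightly larger sub-cubes. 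Neither point affects the substance of the argument.
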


As the proof is quite similar to the proof of \cite[Lemma 6.2]{CJK},
we leave the verification to the reader. Instead we prove the following result,
which readily implies \eqref{eq:sigma:bdd2}, and thereby
completes our derivation of Theorem \ref{th:sigma:bdd}.

\begin{proposition} 
	If $\sigma$ is bounded uniformly away from zero
	and infinity, then $u_t^*(R)\asymp(\log R)^{\nicefrac12}$
	a.s.
\end{proposition}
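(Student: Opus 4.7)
The proposition has two halves. The upper bound $u_t^*(R) \le \textnormal{const}\cdot (\log R)^{\nicefrac12}$ a.s.\ has not been established yet; the lower bound of the same order, in place of the weaker $(\log R)^{\nicefrac14}$ of Proposition \ref{prop2}, will come from upgrading that argument by using the sharper Gaussian-type tail estimate in Lemma \ref{tail_bound} instead of the cruder one in Lemma \ref{tail_bound0}.

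\textbf{Plan for the lower bound.} I would repeat the scheme in the proof of Proposition \ref{prop2} verbatim, replacing every occurrence of $(\log R)^{\nicefrac14}$ by $(\log R)^{\nicefrac12}$ and every occurrence of the tail estimate from Lemma \ref{tail_bound0} by the bound $\inf_x\P\{|u_t(x)|>\lambda\}\ge c_1^{-1}\e^{-c_1\lambda^2}$ of Lemma \ref{tail_bound}. In the notation of that proof, the quantity $\mathcal{T}_1=\sup_x\P\{|u_t(x)|<3\theta(\log R)^{\nicefrac12}\}$ is now bounded above by $1 - c_1^{-1} R^{-9c_1\theta^2}$, while $\mathcal{T}_2\le c_1(k)\beta^{-kb/2}(\log R)^{-k/2}$ by Lemma \ref{lem:u-Ubl}. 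Choosing $N:=\lceil R^q\rceil^d$, $\beta:=R^{1-q}/\log R$, $k$ large enough that $qd-\tfrac12 kb(1-q)<-2$, and $\theta$ small enough that $9c_1\theta^2<q$, the Borel--Cantelli lemma and monotonicity of $R\mapsto u_t^*(R)$ then give $\liminf_R (\log R)^{-\nicefrac12}u_t^*(R)>0$ a.s.

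\textbf{Plan for the upper bound.} The idea is a standard covering plus modulus-of-continuity argument. Fix a large $R$ and set $\delta:=1/\log R$, which is in the range allowed by Lemma \ref{exp_bound} for $R$ large. Cover $\{\|x\|\le R\}$ by a grid $\mathcal{G}_R$ of at most $(R/\delta)^d$ cubes of side length at most $\delta$, and write
\begin{equation}
u_t^*(R) \le \max_{x\in\mathcal{G}_R}|u_t(x)| + \max_{T\in\mathcal{G}_R}
\sup_{\substack{x,x'\in T\\ \|x-x'\|\le \delta}}|u_t(x)-u_t(x')|.
\end{equation}
By the upper bound in Lemma \ref{tail_bound} and a union bound, for any constant $A$ large compared with $\sqrt{d/c_2}$,
\begin{equation}
\P\Big\{\max_{x\in\mathcal{G}_R}|u_t(x)|>A(\log R)^{\nicefrac12}\Big\}
\le |\mathcal{G}_R|\cdot c_2^{-1}R^{-c_2 A^2}\le c_2^{-1}(R\log R)^d R^{-c_2 A^2},
\end{equation}
which is summable along $R_n:=2^n$ once $A$ is chosen with $c_2 A^2>d+2$. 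For the oscillation term, Chebyshev together with Lemma \ref{exp_bound} gives
\begin{equation}
\P\Big\{\sup_{x,x'\in T,\,\|x-x'\|\le\delta}|u_t(x)-u_t(x')|>(\log R)^{\nicefrac12}\Big\}
\le \frac{2}{\delta}\exp\!\Big(-\frac{\sqrt{\varkappa}\log R}{C\delta}\Big),
\end{equation}
and summing this over the $O((R\log R)^d)$ cubes in $\mathcal{G}_R$ still produces a probability that decays much faster than any polynomial in $R$, since $1/\delta=\log R$ inside the exponent. Borel--Cantelli along $R_n=2^n$, combined with monotonicity in $R$, then yields $\limsup_R (\log R)^{-\nicefrac12}u_t^*(R)<\infty$ a.s., completing the proof.

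\textbf{Main obstacle.} The only nontrivial input not already assembled in the excerpt is the tightness of the oscillation estimate: we need to ensure the scale $\delta=1/\log R$ is small enough that Lemma \ref{exp_bound} is applicable but large enough that $(R/\delta)^d$ factors in the union bound are dwarfed by the exponential decay $\exp(-\sqrt{\varkappa}/(C\delta)\cdot \log R)$. With $\delta=1/\log R$ this exponent is of order $(\log R)^2$, which comfortably beats the polynomial-in-$R$ cardinality of the grid, so the two estimates fit together cleanly. All the rest is bookkeeping parallel to the proof of Proposition \ref{prop2} and the proof of Theorem \ref{th:boundedness}.
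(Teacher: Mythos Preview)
Your proposal is correct and follows essentially the same strategy as the paper: the lower bound is obtained exactly as you describe, by rerunning the argument of Proposition \ref{prop2} with Lemma \ref{tail_bound} in place of Lemma \ref{tail_bound0}; the upper bound is obtained by a covering argument combining the pointwise tail in Lemma \ref{tail_bound} with the oscillation control of Lemma \ref{exp_bound}, followed by Borel--Cantelli and monotonicity. The only cosmetic difference is in the mesh size: the paper takes cubes of \emph{fixed} side length proportional to $(t\varkappa)^{1/2}$, so that there are $O(R^d)$ cubes and the oscillation probability decays like $R^{-b^2/(Ct^{1/2})}$, which is made summable simply by choosing $b$ large; you instead take $\delta=1/\log R$, which produces more cubes but a super-polynomial oscillation bound that dominates them anyway. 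Both choices work and neither requires any additional idea.
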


\begin{proof}
	We may follow the proof of Proposition \ref{prop2},
	but use Lemma \ref{tail_bound} instead of Lemma \ref{tail_bound0},
	in order to establish that
	$\liminf_{R\to \infty}(\log R)^{-1/2}u_t^{*}(R)>0$ a.s.
	We skip the details, as they involve making only routine changes
	to the proof of Proposition \ref{prop2}.
	
	It remains to prove that
	\begin{equation}\label{ustar_upbd}
		u_t^{*}(R)  = O\left( (\log R)^{\nicefrac12}\right)
		\qquad(R\to\infty)\quad\text{a.s.\ for all $t>0$.}
	\end{equation}
	
	It suffices to consider the case that $R\gg t$.
	Let us divide the cube $[0\,,R]^d$ into subcubes 
	$\Gamma_1,\Gamma_2,\ldots$
	such that the $\Gamma_j$'s have common side length 
	$a := \text{const}\cdot (t\varkappa)^{\nicefrac12}$ 
	and the distance between any two points in $\Gamma_j$ 
	is at most $(t\varkappa)^{\nicefrac12}$.
	The total number $N$ of such subcubes is $O(R^d)$.
	
	We now apply Lemmas \ref{exp_bound} and \ref{tail_bound} as follows:
	\begin{align}
		&\P\left\{\sup_{x\in [0,R]^d} |u_t(x)|>2b(\ln R)^{%
			\nicefrac12}\right\} \\\notag
		& \le \P\left\{ \max_{1\le j\le N} |u_t(x_j)|>b (\ln R)^{%
			\nicefrac12}\right\} 
			+\P\left\{ \max_{1\le j\le N} \sup_{x,y\in \Gamma_j}
			|u_t(x)-u_t(y)|>b (\ln R)^{\nicefrac12} \right\} \\\notag
		&\le \textnormal{const}\cdot R^d\e^{-c_2b^2\ln R}+
			\frac{\textnormal{const}\cdot R^d}{(t\varkappa)^{%
			\nicefrac12}\exp\left(b^2\ln 
			(R)/Ct^{\nicefrac12}\right)}.
	\end{align}
	Consequently, 
	\begin{equation}
		\sum_{m= 1}^\infty
		\P\left\{\sup_{x\in [0,m]^d} |u_t(x)|>2b(\ln m)^{\nicefrac12}\right\}
		<\infty,
	\end{equation}
	provided that we choose $b$ sufficiently large.
	This, the Borel--Cantelli Lemma, and a  monotonicity argument 
	together complete the proof of \eqref{ustar_upbd}.
\end{proof}

\section{Proof of Theorem \ref{th:pa_f(0)Finite}}

Let us first establish some point estimates for the tail probability
of the solution $u$ to \textnormal{(PAM)}. Throughout this subsection the assumptions
of Theorem \ref{th:pa_f(0)Finite} are in force.

\begin{lemma}\label{lem:pa_ProbEst}
	For every $t>0$,
	\begin{equation}
		\limsup_{\lambda\to\infty}\sup_{x\in\R^d}
		\frac{\log\P \{|u_t(x)|\ge \lambda\}}{(\log \lambda)^2}
		\le -\frac{1}{4tf(0)}.
	\end{equation}
	Additionally, for every $t>0$,
	\begin{equation}
		\liminf_{\lambda\to\infty}\inf_{x\in\R^d}
		\frac{\log\P \{|u_t(x)|\ge \lambda\}}{(\log \lambda)^2}
		\ge -\frac{4tf(0)}{(\Lambda_d a_t)^2},
	\end{equation}
	where $\Lambda_d$ and $a_t=a_t(f\,,\varkappa)$ were defined in Proposition
	\ref{pr:pa_mombd}.
\end{lemma}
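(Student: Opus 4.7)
The plan is to derive both inequalities from the two-sided moment bound in Proposition \ref{pr:pa_mombd}, namely
\begin{equation}
\underline{u}_0^{k}\e^{\Lambda_d a_t k^2}\le\E\!\left(|u_t(x)|^k\right)\le \overline{u}_0^{\,k}\e^{tf(0)k^2},
\end{equation}
valid for all $k\ge 2$. The upper tail estimate will come from Chebyshev's inequality combined with the moment upper bound, while the lower tail estimate will come from the Paley--Zygmund inequality \eqref{eq:PZ} combined with both moment bounds. In both arguments the parameter $k$ is chosen as a real number of order $\log\lambda$; since the $L^k$-norm is continuous and monotone in $k$, the use of non-integer $k$ is harmless.

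For the upper bound, I apply Chebyshev to obtain $\P\{|u_t(x)|\ge\lambda\}\le \lambda^{-k}\overline{u}_0^{\,k}\e^{tf(0)k^2}$ and take logarithms, which gives
\begin{equation}
\log\P\{|u_t(x)|\ge\lambda\}\le -k\log\lambda + k\log\overline{u}_0 + tf(0)k^2.
\end{equation}
This is a quadratic in $k$ minimized at $k^{*}:=(\log\lambda-\log\overline{u}_0)/(2tf(0))$, which is $\ge 2$ once $\lambda$ is large enough. Substituting $k^{*}$ yields $\log\P\{|u_t(x)|\ge\lambda\}\le -(\log\lambda-\log\overline{u}_0)^2/(4tf(0))$, and dividing by $(\log\lambda)^2$ and letting $\lambda\to\infty$ produces the claimed $-1/(4tf(0))$ upper bound, uniformly in $x\in\R^d$.

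For the lower bound, I apply the Paley--Zygmund inequality with $\epsilon=\nicefrac12$ to the nonnegative random variable $Z:=|u_t(x)|^{2k}$, obtaining
\begin{equation}
\P\!\left\{|u_t(x)|\ge 2^{-1/(2k)}\|u_t(x)\|_{2k}\right\}\ge \frac{\bigl[\E(|u_t(x)|^{2k})\bigr]^2}{4\,\E(|u_t(x)|^{4k})}.
\end{equation}
Inserting the moment lower bound with exponent $2k$ and the moment upper bound with exponent $4k$, the right-hand side is at least $\frac14(\underline{u}_0/\overline{u}_0)^{4k}\exp\bigl(\{8\Lambda_d a_t-16tf(0)\}k^2\bigr)$, while the threshold $\lambda_k:=2^{-1/(2k)}\|u_t(x)\|_{2k}$ satisfies $\lambda_k\ge \tfrac12\underline{u}_0\,\e^{2\Lambda_d a_t k}$. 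Given $\lambda>\e$ I then choose $k=k(\lambda):=\lceil\log(2\lambda/\underline{u}_0)/(2\Lambda_d a_t)\rceil$, which guarantees $\lambda_k\ge\lambda$ and satisfies $k\sim \log\lambda/(2\Lambda_d a_t)$ as $\lambda\to\infty$. Substituting gives
\begin{equation}
\log\P\{|u_t(x)|\ge\lambda\}\ge -16tf(0)k^2+O(k)\ge -\frac{4tf(0)}{(\Lambda_d a_t)^2}(\log\lambda)^2+O(\log\lambda),
\end{equation}
from which the second assertion follows after dividing by $(\log\lambda)^2$ and taking the liminf, again uniformly in $x\in\R^d$.

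There is no serious obstacle: the whole argument is a moment-to-tail transfer, and the two-sided moment control from Proposition \ref{pr:pa_mombd} is strong enough that both directions are essentially ``Chebyshev plus Paley--Zygmund.'' The most delicate bookkeeping is simply matching the quadratic-in-$k$ exponent with the logarithmic scale in $\lambda$, and checking that the lower-order terms in $k$ produced by the Paley--Zygmund step (the factor $(\underline{u}_0/\overline{u}_0)^{4k}$) are absorbed into the $O(\log\lambda)$ correction and therefore do not affect the $(\log\lambda)^2$ coefficient.
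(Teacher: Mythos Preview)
Your proof is correct and follows essentially the same approach as the paper: both use the two-sided moment bounds of Proposition~\ref{pr:pa_mombd}, applying Chebyshev's inequality with optimized $k$ for the upper tail and the Paley--Zygmund inequality \eqref{eq:PZ} (with the same discarding of the positive $8\Lambda_d a_t k^2$ term via $\gamma\le 16tf(0)$) for the lower tail. The only cosmetic difference is that for the upper bound the paper routes the Chebyshev step through an exponential-moment lemma from \cite{CJK}, whereas you optimize the quadratic in $k$ directly; both yield the same constant $-1/(4tf(0))$.
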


\begin{proof}
	Let $\log_+(z):=\log(z\vee\e)$ for all real numbers $z$. 
	Proposition \ref{pr:pa_mombd} and
	 Lemma 3.4 of the companion paper \cite{CJK} together imply that
	if $0<\gamma<(4tf(0))^{-1}$, then
	$\E\exp( \gamma|\log_+(u_t(x))|^2)$ is bounded
	uniformly in $x\in\R^d$. The
	first estimate of the lemma follows from this by an application
	of Chebyshev's inequality.
	
	As regards the second bound, we apply the Paley--Zygmund inequality 
	\eqref{eq:PZ} in conjunction with Proposition \ref{pr:pa_mombd} as follows:
	\begin{equation}
		\P\left\{ |u_t(x)|\ge \frac{1}{2}\|u_t(x)\|_{2k}\right\} 
		\ge  \frac{\left(\E\left(|u_t(x)|^{2k}\right)\right)^2}{4
		\E\left(|u_t(x)|^{4k}\right)}
		\ge \frac14 \e^{ k^2\left[ 8\Lambda_da_t-16tf(0)\right]}\cdot \left(\underline{u}_0/\overline{u}_0\right)^{4k}.
	\end{equation}
	Let us denote $\gamma=\gamma(\varkappa\,,t)
	:=16 tf(0)-8\Lambda_da_t>0$. A second application of Proposition
	\ref{pr:pa_mombd} then yields the following pointwise bound:
	\begin{equation}
		\P\left\{|u_t(x)|\ge \frac{\underline{u}_0}{2} \e^{2\Lambda_da_tk}\right\}
		\ge \frac{1}{4}\e^{-\gamma k^2}\left(\underline{u}_0/\overline{u}_0\right)^{4k}.
	\end{equation}
	The second assertion of the lemma follows from this
	and the trivial estimate $\gamma\le 16tf(0)$,
	because we can consider  $\lambda$ between
	$\frac{\underline{u}_0}{2}\exp(2\Lambda_d a_t k)$ and $\frac{\underline{u}_0}{2}\exp(2\Lambda_d a_t(k-1))$.
\end{proof}

Owing to the parameter dependencies pointed out in 
Proposition \ref{pr:pa_mombd}, Theorem \ref{th:pa_f(0)Finite}
is a direct consequence of the following result.

\begin{proposition}\label{pam}
	For the parabolic Anderson model, the following holds: For all $t>0$,
	there exists a constant $\theta_t\in(0\,,\infty)$---independent
	 of $\varkappa$---such that
	\begin{equation}
		\frac{\Lambda_d a_t}{(8tf(0))^{\nicefrac12}}
		\le \liminf_{R\to\infty} \frac{\log u_t^*(R)}{(\log R)^{\nicefrac12}}
		\le \limsup_{R\to\infty} \frac{\log u_t^*(R)}{(\log R)^{\nicefrac12}}
		\le \theta_t,
	\end{equation}
	where $\Lambda_d$ and $a_t=a_t(f\,,\varkappa)$ were defined in Proposition
	\ref{pr:pa_mombd}.
\end{proposition}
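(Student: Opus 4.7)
The proposition splits into a lower bound on $\liminf_{R\to\infty} \log u_t^*(R)/(\log R)^{\nicefrac12}$ and a matching upper bound on $\limsup$, which I would establish separately. The lower bound parallels Proposition \ref{prop2}, while the upper bound requires extending the modulus-of-continuity calculation of Proposition \ref{pr:mod:crude} to the (unbounded-$\sigma$) setting of the parabolic Anderson model.

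For the lower bound I would mirror the proof of Proposition \ref{prop2} essentially verbatim, using the second assertion of Lemma \ref{lem:pa_ProbEst} in place of Lemma \ref{tail_bound0}. Fix $\theta<\Lambda_d a_t/(8tf(0))^{\nicefrac12}$ and set $\lambda_R:=\exp(\theta(\log R)^{\nicefrac12})$; the tail bound then gives
$$
\inf_{x\in\R^d} \P\{|u_t(x)|\ge \lambda_R\} \ge R^{-4tf(0)\theta^2/(\Lambda_d a_t)^2},
$$
where the exponent is strictly less than $\nicefrac12$ by the choice of $\theta$. Select $N\asymp R^{qd}$ points inside $[-R\,,R]^d$ at mutual $D$-distance at least $2n\beta(1+\sqrt t)$, with $\beta:=R^{1-q}/\log R$ and $n:=[\log\beta]+1$. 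By Lemma \ref{lem:un-ind} the variables $\{U^{(\beta,n)}_t(x^{(j)})\}_{j\le N}$ are independent, and by Lemma \ref{lem:u-Ubl} their $L^k(\P)$-discrepancies from the $u_t(x^{(j)})$'s are controlled. Carrying out the decomposition \eqref{eq0}--\eqref{eq1}, tuning $(q\,,k)$ so that both error contributions are summable along $R=2^m$, and invoking Borel--Cantelli give $\liminf_R \log u_t^*(R)/(\log R)^{\nicefrac12}\ge \theta$; letting $\theta\uparrow \Lambda_d a_t/(8tf(0))^{\nicefrac12}$ closes this half.

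For the upper bound I would use the $\varkappa$-independent moment bound $\E|u_t(x)|^k \le \overline{u}_0^k\, e^{tf(0)k^2}$ of Proposition \ref{pr:pa_mombd}. Optimizing Chebyshev at $k=\log(\lambda/\overline{u}_0)/(2tf(0))$ yields the pointwise tail $\P\{u_t(x)>\lambda\}\le \exp\!\bigl(-(\log(\lambda/\overline{u}_0))^2/(4tf(0))\bigr)$. To pass from pointwise to supremal estimates, I would derive a PAM-adapted modulus bound of the shape $\|u_t(x)-u_t(x')\|_k\le C\sqrt k\, e^{Ck^2}\|x-x'\|^{a/2}$ for $\|x-x'\|\le 1$ and some $a>0$, by rerunning the BDG-and-Minkowski calculation of Proposition \ref{pr:mod:crude} with $\|u_s(y)\|_k$ (controlled via Proposition \ref{pr:pa_mombd}) playing the role that the boundedness of $\sigma$ played there. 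A quantitative Kolmogorov lemma then controls oscillations inside a unit cube, and a union bound over the $O(R^d)$ unit cubes covering $[-R\,,R]^d$ produces $\P\{u_t^*(R)>\exp(\theta(\log R)^{\nicefrac12})\}\le R^{d-\theta^2/(4tf(0))+o(1)}$, summable along $R=2^m$ once $\theta>\theta_t:=2\sqrt{d\,t\,f(0)}$. Borel--Cantelli yields the upper bound; since $\theta_t$ depends only on $(t\,,d\,,f(0))$ it is $\varkappa$-independent, as required.

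The substantive obstacle is the PAM-adapted modulus of continuity: Lemmas \ref{mod_cont} and \ref{exp_bound} use $\sup|\sigma|<\infty$, whereas $\sigma(u)=cu$ here only affords the polynomially-weighted estimate $\|u_s(y)\|_k\lesssim e^{Ck^2}$. The $e^{Ck^2}$ prefactor in the modulus is harmless for Kolmogorov (which needs only a single fixed $k$), but bookkeeping is required to confirm that the resulting $\theta_t$ is genuinely independent of $\varkappa$. Matching the exact constant $\Lambda_d a_t/(8tf(0))^{\nicefrac12}$ in the lower bound is then a purely parametric exercise in tuning $(q\,,k\,,\beta\,,N)$ in the Borel--Cantelli step.
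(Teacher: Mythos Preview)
Your lower-bound argument is essentially the paper's: the same localization via $U^{(\beta,n)}$, the same independence from Lemma~\ref{lem:un-ind}, the tail input from Lemma~\ref{lem:pa_ProbEst}, and the same Borel--Cantelli/monotonicity finish, with the constant falling out by letting $b\downarrow 4tf(0)/(\Lambda_d a_t)^2$.

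The upper bound has a genuine gap. Your claim that ``Kolmogorov needs only a single fixed $k$'' is where it fails. A fixed-$k$ moment bound on the oscillation over a unit cube $T$ gives only
\[
\P\bigl\{\mathrm{Osc}_T(u_t)>\e^{\theta\sqrt{\log R}}\bigr\}
\le C(k)\,\e^{-k\theta\sqrt{\log R}},
\]
and after the union over the $O(R^d)$ cubes this becomes
$\exp\bigl(d\log R - k\theta\sqrt{\log R}+O(1)\bigr)$,
which diverges for every fixed $k$ because $\log R$ beats $\sqrt{\log R}$. So the oscillation term cannot be absorbed into the ``$+o(1)$'' in your final display; it dominates the whole estimate and the Borel--Cantelli sum diverges.

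The paper's fix is to let $k$ grow with $R$: it proves the sup-over-a-cube bound \eqref{eq:M1} with the $k$-dependence tracked explicitly, namely $\E[\mathrm{Osc}_T(u_t)^{2k}]\le C_1^k\e^{C_2k^2}$ with $C_1,C_2$ depending only on $t$ (the $\varkappa$-dependence cancels because the cubes have side $\sim\sqrt{t\varkappa}$, so $\|x-y\|^k/\varkappa^{k/2}\le t^{k/2}$). Then choosing $k=[(\log R)^{1/2}]$ turns the oscillation contribution into $R^{d+C_2-2\zeta+o(1)}$, which \emph{is} summable once $\zeta$ is large enough. The price is that $\theta_t$ must absorb the constant $C_2$ coming from the $\e^{C_2k^2}$ factor; this is why the paper does not obtain the clean value $2\sqrt{d\,t\,f(0)}$ you suggest, but only some $\varkappa$-independent $\theta_t$.
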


\begin{proof}
	Choose and fix two positive and finite numbers $a$ and $b$ that satisfy
	the following:
	\begin{equation}\label{eq:ab_0}
		a<\frac{1}{4tf(0)},
		\quad
		b>\frac{4tf(0)}{(\Lambda_d a_t)^2}.
	\end{equation}
	According to Lemma \ref{lem:pa_ProbEst}, the following holds for
	all $\lambda>0$ sufficiently large:
	\begin{equation}\label{eq:ab}
		\e^{-b(\log \lambda)^{2}} \le 
		\P\left\{ |u_t(x)|\ge \lambda \right\} \le 
		\e^{-a(\log \lambda)^{2}}.
	\end{equation}
	Our goal is twofold: First, we would like to prove that
	with probability one
	$\log  | u_t^*(R) |\asymp (\log R)^{\nicefrac12}$	as $R\to\infty$;
	and next to estimate the constants in ``$\asymp$.''
	
	We first derive an almost sure asymptotic 
	lower bound for $\log|u_t^*(R)|$. 
	
	Let us proceed as we did in our estimate of \eqref{eq0}.
	We introduce free parameters  $\beta,k,N\ge1$ [to be chosen later]
	together with $N$ points $x^{(1)},\ldots,x^{(N)}$.
	We will assume that $D(x^{(i)}\,,x^{(j)})\ge 2n\beta(1+\sqrt t)$ 
	where $D(x\,,y)$ was defined in \eqref{d} and 
	$n:=[\log\beta]+1$ as in Lemma \ref{lem:un-ind}. 
	If $\xi >0$ is an arbitrary parameter, then
	our localization estimate (Lemma \ref{lem:u-Ubl}) yields
	the following for all sufficiently-large values of $R$ [independently
	of $N$ and $\beta$]:
	\begin{align}\notag
		&\P\left\{\max_{1\le j\le N}|u_t(x^{(j)})|<
			\e^{\xi \sqrt{\log R}}\right\}\\\notag
		& \le \P\left\{ \max_{1\le j\le N}|U_t^{(\beta,n)}(x^{(j)})|<
			2\e^{\xi \sqrt{\log R}}\right\} 
			+\P\left\{ \max_{1\le j\le N}
			\left| u_t(x^{(j)})-U_t^{(\beta,n)}(x^{(j)})\right| 
			>\e^{\xi \sqrt{\log R}}\right\}\\
		&\le  \left( 1- \P\left\{ \left| U_t^{(\beta,n)}(x^{(1)})\right| 
			\ge 2\e^{\xi \sqrt{\log R}}\right\}\right)^N
			+ \frac{N G^kk^{k/2}\e^{Fk^2}}{\beta^{kb/2}\e^{k\xi 
			\sqrt{\log R}}}.
	\label{eq2}
	\end{align}
	And we estimate the remaining probability by similar means, viz.,
	\begin{align}\label{eq3}
		& \P\left\{\left| U_t^{(\beta,n)}(x^{(1)})\right| \ge 2
			\e^{\xi \sqrt{\log R}}\right\}\\\notag
		&\ge \P\left\{\left| u_t(x^{(1)})\right| \ge 
			3\e^{\xi \sqrt{\log R}}\right\} - \P\left\{
			\left| u_t(x^{(1)})-U_t^{(\beta,n)}(x^{(1)})\right| >
			\e^{\xi \sqrt{\log R}}\right\}\\\notag
		 &\ge \exp\left(
		 	-b\left\{\log\left(3\e^{\xi \sqrt{\log R}}\right)\right\}^{2}
			\right) -\frac{NG^kk^{k/2}\e^{Fk^2}}{\beta^{kb/2}
			\e^{k \xi \sqrt{\log R}}}.
	\end{align}
	We now fix our parameters $N$ and $\beta$ 
	as follows: First we choose an arbitrary $\theta\in(0\,,1)$,
	and then select $N:=\lceil R^{\theta}\rceil^d$ 
	and $\beta:= R^{1-\theta}/\log R$.
	For these choices, we can apply \eqref{eq3} in  \eqref{eq2} and deduce
	the bound
	\begin{align}
		&\P\left\{ \max_{1\le j\le N}|u_t(x_j)|< \e^{\xi \sqrt{%
			\log R}}\right\} \\\notag
		&\le \left( 1-\frac{\textnormal{const}}{R^{b\xi ^{2}}}+
			\frac{G^kk^{k/2}\e^{Fk^2}(\log R)^k}{%
			R^{(kb(1-\theta)-2\theta d)/2}\,
			\e^{\xi k\sqrt{\log R}}}\right)^N
			+\frac{G^kk^{k/2}\e^{Fk^2}(\log R)^k}{%
			R^{(kb(1-\theta)-2\theta d)/2}\,
			\e^{\xi k\sqrt{\log R}}}.
	\end{align}
	Now we choose our remaining parameters
	$k$ and $\xi $ so that $\frac12 kb(1-\theta)-\theta d>2$ and 
	$b\xi ^{2}<\theta/2$. In this way we obtain
	\begin{equation}
		\P\left\{\max_{1\le j\le N}|u_t(x_j)|<
		\e^{\xi \sqrt{\log R}}\right\} \le\exp\left(
		-CR^{\theta/2}\right) +\frac{C}{R^2}.
	\end{equation}
	In a cube of side length $2(1+\sqrt t)R$, there are at least $N$ 
	points separated by ``$D$-distance'' $2(1+\sqrt t) \beta n$. 
	Therefore, the Borel--Cantelli Lemma and a  
	monotonicity argument together imply that
	$\liminf_{R\to\infty} \exp\{-\xi (\log R)^{\nicefrac12}\}u_t^*(R)>1$ almost surely.
	We can first let $\theta\downarrow 1$, then $\xi \uparrow (2b)^{-\nicefrac12}$,
	and finally $b\uparrow 4tf(0)/(\Lambda_d a_t)^2$---in 
	 this order---in order to complete our derviation of the stated a.s.\ asymptotic
	lower bound for $u_t^*(R)$.

	For the other direction, we begin by applying \eqref{eq:cont_ic} and \textnormal{(BDG)}:
	\begin{align}
		&\left\| u_t(x)-u_t(y)\right\|_{2k} \leq \text{const} \cdot \|x-x'\| \\\notag
		&+ 2\left(4kf(0)\int_0^t\|u_s(0)\|_{2k}^2\,
			\d s\left[ \int_{\R^d}\d w \left| p_{t-s}(w-x)-p_{t-s}(w-y)\right|
			\right]^2\right)^{\nicefrac12}.
	\end{align}
	We apply Proposition \ref{pr:pa_mombd} to estimate $\|u_s(0)\|_{2k}$,
	and Lemma \ref{p_bound} to estimate the integral that involves
	the heat kernel. By arguments similar as in Lemma \ref{mod_cont}, we find that there exists 
	$C=C(t)\in(0\,,\infty)$---independently of $(x\,,y\,,k\,,\varkappa)$---such that
	uniformly for all $x,y\in\R^d$ with $\|x-y\|\le (t\varkappa)^{\nicefrac12}$,
	\begin{equation}
		\E\left(|u_t(x)-u_t(y)|^{2k}\right) \le (Ck)^k
		\e^{4tf(0)k^2} \frac{\|x-y\|^k}{\varkappa^{k/2}}.
	\end{equation}
	By arguments similar to the ones that led to (7.12) in the companion
	paper \cite{CJK} we can show that
	\begin{equation}\label{eq:M1}
		\E\left(\sup_{\substack{x,y\in T: \\
		\|x-y\| \le \sqrt{t\varkappa}}} 
		\left| u_t(x)-u_t(y)\right|^{2k}\right)
		\le  C_1^k \e^{C_2k^2}
	\end{equation}
	(where $C_1$ and $C_2$ depend only on $t$), uniformly over cubes $T$ with side lengths at most $1$. 
	[The preceding should be compared with the result of
	Lemma \ref{mod_cont}.] Now that we are armed with \eqref{eq:M1},
	we may proceed to complete the proof of the theorem
	as follows: We split $[0\,,R]^d$ into subcubes of side length 
	$a$ each of which is contained in a ball of radius $\frac12(t\varkappa)^{%
	\nicefrac12}$ centered 
	around its midpoint. Let $\mathcal{C}_R$ denotes the collection of all mentioned subcubes and $\mathcal{M}_R$ the set of their midpoints. For all $\zeta>0$, we have:
	\begin{align}
		\P\left\{ u_t^{*}(R) > 2\e^{\zeta\sqrt{\log R}}\right\}
		  &\le \P\left\{ \max_{x \in \mathcal{M}_R} |u_t(x)| >
			\e^{\zeta\sqrt{\log R}}\right\}\\\notag
		& \qquad + \P\left\{ \sup_{T\in\mathcal{C}_R} \sup_{x,y\in T}|u_t(x)-u_t(y)| >
			\e^{\zeta\sqrt{\log R}}\right\}, \\ \notag
			&\le O(R^d) \cdot\P\left\{ |u_t(0)| >
			\e^{\zeta\sqrt{\log R}}\right\}\\\notag
		& \qquad + \sum_{T\in\mathcal{C}_R}
			\P\left\{ \sup_{x,y\in T}|u_t(x)-u_t(y)| >
			\e^{\zeta\sqrt{\log R}}\right\}.
	\end{align}
	We use the notation set forth in \eqref{eq:ab}, together with
	\eqref{eq:M1}, and deduce the following estimate:
	\begin{equation}
		\P\left\{ u_t^{*}(R) > 2\e^{\zeta\sqrt{\log R}}\right\} 
		=O(R^d)
		\cdot\left[
		\e^{-a\zeta^2\log R}+
		\frac{C_1^k\e^{C_2k^2}}{\e^{2k\zeta \sqrt{\log R}}}\right],
	\label{pam:up}
	\end{equation}
	as $R\to\infty$.
	Now choose $k:=[(\log R)^{\nicefrac12}]$ and $\zeta$ 
	large so that the above is summable in $R$, as the variable 
	$R$ ranges over all positive integers. 
	The Borel-Cantelli Lemma and a standard 
	monotonicity argument together imply that with probability one,
	$\limsup_{R\to\infty} (\log R)^{-\nicefrac12}\log u_t^*(R)
	\le\zeta$.
	[Now $R$ is allowed to roam over all positive reals.]
	From the way in which $\zeta$ is chosen, it is clear that 
	$\zeta$ does not depend on $\varkappa$.
\end{proof}

\section{Riesz kernels}

Now we turn to the case where the correlation function is of the
Riesz form; more precisely, we have 
$f(x)=\textnormal{const}\cdot \|x\|^{-\alpha}$
for some $\alpha\in(0\,,d\wedge 2)$. 
We begin this discussion by establishing some
moment estimates for the solution $u$ to \textnormal{(PAM)}.
Before we being our analysis, let us recall some well-known
facts from harmonic analysis (see for example \cite{M}).

For all $b\in(0\,,d)$ define
$\mathcal{R}_b(x) := \|x\|^{-b}$ $(x\in\R^d).$
This is a rescaled Riesz kernel with index $b\in(0\,,d)$; it is a locally integrable
function whose Fourier transform is defined, for all $\xi\in\R^d$, as
\begin{equation}\label{eq:Riesz:FT}
	\hat{\mathcal{R}}_b(\xi) = C_{d,d-b} \mathcal{R}_{d-b}(\xi),
	\quad\text{where}\quad
	C_{d,p} := \frac{\pi^{d/2}2^{d-p}\Gamma((d-p)/2)}{\Gamma(p/2)}.
\end{equation}
We may note that the correlation function $f$ considered in this section is proportional to
$\mathcal{R}_\alpha$. We note also that the Fourier transform
of \eqref{eq:Riesz:FT} is understood in the sense of generalized
functions. Suppose next that $a,b\in(0\,,d)$ satisfy $a+b<d$, and note that
$\hat{\mathcal{R}}_{d-a}(\xi)\hat{\mathcal{R}}_{d-b}(\xi)
=\{C_{d,a}C_{d,b}/C_{d,a+b}\}\hat{\mathcal{R}}_{d-(a+b)}(\xi).$
In other words, whenever $a,b,a+b\in(0\,,d)$,
\begin{equation}\label{eq:Riesz:conv}
	\mathcal{R}_{d-a} * \mathcal{R}_{d-b}= 
	\frac{C_{d,a}C_{d,b}}{C_{d,a+b}}
	\mathcal{R}_{d-(a+b)},
\end{equation}
where the convolution is understood in the sense of generalized functions.

\subsection{Riesz-kernel estimates}

We now begin to develop several inequalities for the solution $u$ to \textnormal{(PAM)}
in the case that $f(x)=\textnormal{const}\cdot\|x\|^{-\alpha}=
\textnormal{const}\cdot \mathcal{R}_\alpha(x)$.

\begin{proposition}\label{pr:pa_mom_Riesz}
	There exists positive and finite constants 
	$\underline{c}=\underline{c}(\alpha\,,d)$ and
	$\bar{c}=\bar{c}(\alpha\,,d)$ such that
	\begin{equation}
		\underline{u}_0^k\exp\left(
		\underline{c}t \frac{k^{(4-\alpha)/(2-\alpha)}}{
		\varkappa^{\alpha/(2-\alpha)}}
		\right)\le \E\left(|u_t(x)|^k\right) \le \overline{u}_0^k \exp\left( \bar{c}t
		\frac{k^{(4-\alpha)/(2-\alpha)}}{\varkappa^{\alpha/(2-\alpha)}} \right),
	\end{equation}
	uniformly for all $x\in\R^d$, $t,\varkappa>0$, and $k\ge 2$, where $\underline{u}_0$ and $\overline{u}_0$ are defined in \eqref{ubar}.
\end{proposition}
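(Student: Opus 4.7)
My plan is to invoke Conus's moment formula (Lemma \ref{lem:DC}) to reduce both inequalities to matching two-sided bounds on the Brownian functional
\begin{equation*}
	I_k(t,\varkappa) := \E\exp\!\left(c_0\varkappa^{-\alpha/2}\sum_{1\le i\ne j\le k}\int_0^t\bigl\|b^{(i)}_r-b^{(j)}_r\bigr\|^{-\alpha}\,\d r\right),
\end{equation*}
where $c_0>0$ is the constant in $f(x)=c_0\|x\|^{-\alpha}$. The target is $\log I_k(t,\varkappa)\asymp t\,k^{(4-\alpha)/(2-\alpha)}\varkappa^{-\alpha/(2-\alpha)}$; pairing with the trivial envelopes $\underline{u}_0^k\le u_0(x)^k\le\overline{u}_0^k$ inside Lemma \ref{lem:DC} then yields the Proposition.

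For the lower bound I would proceed by pathwise confinement. Restricting every $b^{(i)}$ to $B(0,\rho)$ throughout $[0,t]$ forces each mutual distance $\|b^{(i)}_r-b^{(j)}_r\|$ to be at most $2\rho$, so the exponent in $I_k$ is bounded below by $c_1\varkappa^{-\alpha/2}k^2 t\rho^{-\alpha}$; and by independence together with the standard small-ball estimate the probability of this joint event is at least $\exp(-c_d k t\rho^{-2})$, where $c_d$ is proportional to the principal Dirichlet eigenvalue of $\tfrac12\Delta$ on the unit ball in $\R^d$. Hence
\begin{equation*}
	I_k(t,\varkappa)\ge\exp\!\bigl(c_1\varkappa^{-\alpha/2}k^2 t\rho^{-\alpha}-c_d k t\rho^{-2}\bigr),
\end{equation*}
and optimizing over $\rho$ via the elementary identity $\sup_{\rho>0}(A\rho^{-\alpha}-B\rho^{-2})=c_\alpha A^{2/(2-\alpha)}B^{-\alpha/(2-\alpha)}$, applied with $A\asymp\varkappa^{-\alpha/2}k^2 t$ and $B\asymp kt$, delivers the claimed lower bound; the maximizer is $\rho_{*}\asymp(\varkappa^{-\alpha/2}k)^{-1/(2-\alpha)}$.

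The upper bound is the main obstacle: the bound $f\le f(0)$ that drove Proposition \ref{pr:pa_mombd} is vacuous here ($f(0)=\infty$), and naive pairwise Hölder/Khasminski estimates distributed over the $\binom{k}{2}$ pairs yield only the exponent $k^{4/(2-\alpha)}$, overshooting the target by a factor $k^{\alpha/(2-\alpha)}$. To recover $k^{(4-\alpha)/(2-\alpha)}$ I will pass to spectral theory on $\R^{dk}$. Viewing $J_k=\int_0^t W_k(\vec b_r)\,\d r$ with $W_k(\vec x)=\sum_{i\ne j}\|x_i-x_j\|^{-\alpha}$, Kac's formula yields $I_k(t,\varkappa)\le\exp(t\Lambda(\varkappa,k))$, where
\begin{equation*}
	\Lambda(\varkappa,k):=\sup_{\|\psi\|_{L^2(\R^{dk})}=1}\!\Bigl\{c_0\varkappa^{-\alpha/2}\langle W_k\psi,\psi\rangle-\tfrac12\|\nabla\psi\|_{L^2}^2\Bigr\}.
\end{equation*}
I would bound $\Lambda$ by applying, to each pair, the fractional Hardy inequality $\int|\phi(y)|^2\|y\|^{-\alpha}\,\d y\le C\|\,|\nabla|^{\alpha/2}\phi\|_{L^2}^2$ in the relative coordinate $x_i-x_j$, followed by the Plancherel interpolation $\|\,|\nabla|^{\alpha/2}\phi\|_{L^2}^2\le\|\nabla\phi\|_{L^2}^\alpha\|\phi\|_{L^2}^{2-\alpha}$; a Jensen step (valid since $\alpha/2<1$) then gives $\sum_{i<j}\|\nabla_{x_i,x_j}\psi\|_{L^2}^\alpha\le C\,k^{2-\alpha/2}\|\nabla\psi\|_{L^2}^\alpha$ after recognizing that $\sum_{i<j}\|\nabla_{x_i,x_j}\psi\|_{L^2}^2=(k-1)\|\nabla\psi\|_{L^2}^2$. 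Young's inequality with conjugate exponents $(2/\alpha,2/(2-\alpha))$ finally absorbs the resulting $\|\nabla\psi\|_{L^2}^\alpha$ into $\tfrac12\|\nabla\psi\|_{L^2}^2$ and leaves exactly $\Lambda(\varkappa,k)\le C\varkappa^{-\alpha/(2-\alpha)}k^{(4-\alpha)/(2-\alpha)}$, completing the proof.
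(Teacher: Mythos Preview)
Your lower bound is exactly the paper's: confine all $k$ Brownian motions to a ball of radius $\rho$, pay the small-ball cost $\exp(-c_d kt\rho^{-2})$, collect the reward $\exp(c\varkappa^{-\alpha/2}k^2 t\rho^{-\alpha})$, and optimize. Fine.

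Your upper bound is a genuinely different route from the paper's, and the comparison is worth recording. The paper does \emph{not} pass through the Feynman--Kac functional $I_k$ at all for the upper bound; instead it works directly on the mild equation via the renewal/resolvent machinery of Foondun--Khoshnevisan \cite{FK:TRAMS}. Concretely, it shows that if $Q(k,\beta)=z_k\sqrt{(\bar R_{2\beta/k}f)(0)}\le\tfrac12$ then $\E(|u_t(x)|^k)\le 2^k\overline{u}_0^k\e^{\beta t}$, computes $(\bar R_\lambda f)(0)=c\,\lambda^{-(2-\alpha)/2}\varkappa^{-\alpha/2}$ by scaling of the Riesz kernel, and solves for the smallest admissible $\beta$. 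This is short, avoids spectral theory entirely, and delivers bounds uniform in $t>0$ without further work.

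Your spectral/Hardy route is attractive and the variational computation is correct: the fractional Hardy inequality plus Plancherel interpolation plus the Jensen step on $\sum_{i<j}\|\nabla_{x_i,x_j}\psi\|_{L^2}^\alpha$ do give $\Lambda(\varkappa,k)\le C\varkappa^{-\alpha/(2-\alpha)}k^{(4-\alpha)/(2-\alpha)}$, which is the right exponent. The gap is the sentence ``Kac's formula yields $I_k(t,\varkappa)\le\exp(t\Lambda(\varkappa,k))$.'' The spectral radius controls $\|T_t\|_{L^2\to L^2}$, but $I_k=(T_t\mathbf 1)(0)$ is a pointwise evaluation at the \emph{most singular} point of the potential: the $k$-particle Brownian motion starts at the origin of $\R^{dk}$, which sits on every collision set $\{x_i=x_j\}$ where $W_k=+\infty$. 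There is no automatic inequality $(T_t\mathbf 1)(0)\le \e^{t\Lambda}$ from the $L^2$ spectral theorem in this situation. You would need an additional short-time smoothing argument (say, split $[0,t]=[0,\epsilon]\cup[\epsilon,t]$, control the first piece by a direct moment bound exploiting $\alpha<2$, and then feed the resulting $L^2$ density into the spectral bound), and you must check that the constants this produces are uniform in $(t,\varkappa,k)$, which the Proposition demands. As written, this step is a genuine gap; the paper's resolvent argument sidesteps it entirely.
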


\begin{remark}
	We are interested in what Proposition \ref{pr:pa_mom_Riesz}
	has to say in the regime in which $t$ is fixed, $\varkappa\approx 0$,
	and $k\approx\infty$. However, let us spend a few extra lines and 
	emphasize also the following somewhat different consequence
	of Proposition \ref{pr:pa_mom_Riesz}. Define for all $k\ge 2$,
	\begin{equation}\begin{split}
		\underline{\lambda}(k) &:= \liminf_{t\to\infty}
			\inf_{x\in\R^d} \frac1t \log\E\left(|u_t(x)|^k\right)\\
		\overline{\lambda}(k) &:= \limsup_{t\to\infty}
			\sup_{x\in\R^d} \frac1t \log\E\left(|u_t(x)|^k\right).
	\end{split}\end{equation}
	These are respectively the lower and upper uniform Lyapunov 
	$L^k(\P)$-exponents of the parabolic Anderson model driven by
	Riesz-type correlations. Convexity
	alone implies that if $\underline{\lambda}(k_0)>0$ for some
	$k_0>0$, and if $\overline{\lambda}(k)<\infty$ for all $k\ge k_0$,
	then $\underline{\lambda}(k)/k$ and $\overline{\lambda}(k)/k$
	are both strictly increasing for $k>k_0$. Proposition \ref{pr:pa_mom_Riesz}
	implies readily that the common of these increasing sequences is $\infty$.
	In fact, we have the following sharp growth rates, which appear to 
	have not been known previously:
	\begin{equation}
		\frac{\underline{c}}{\varkappa^{\alpha/(2-\alpha)}}\le\liminf_{k\to\infty}
		\frac{\underline{\lambda}(k)}{k^{2/(2-\alpha)}} \le
		\limsup_{k\to\infty}
		\frac{\overline{\lambda}(k)}{k^{2/(2-\alpha)}} \le 
		\frac{\overline{c}}{\varkappa^{\alpha/(2-\alpha)}}.
	\end{equation}
	These bounds can  be used to study further the large-time intermittent
	structure of the solution to the parabolic Anderson model driven by
	Riesz-type correlations. We will not delve into this matter here.
	\qed
\end{remark}

\begin{proof}
	Recall that $f(x)=A\cdot\|x\|^{-\alpha}$;
	we will, without incurring much loss of generality, that $A=1$.
	
	We first derive the lower bound on the moments of $u_t(x)$. 
	Let $\{b^{(j)}\}_{j=1}^k$ denote $k$ independent 
	standard Brownian motions
	in $\R^d$.
	We may apply Lemma \ref{lem:DC} to see that
	\begin{equation}
		\E\left( |u_t(x)|^k\right) \ge \underline{u}_0^k
		\E\left[\exp\left(
		\mathop{\sum\sum}\limits_{%
		1\le i\ne j \le k}
		\int_0^t \frac{ \varkappa^{-\alpha/2}\,\d s}{\left\| b_s^{(i)}-b_s^{(j)}
		\right\|^{\alpha}}\right)\right].
	\end{equation}
	
	We can use the preceding to obtain a large-deviations lower bound
	for the $k$th moment of $u_t(x)$ as follows: Note that 
	$\int_0^t \|b_s^{(i)}-b_s^{(j)}\|^{-\alpha}\,\d s
	\ge (2\epsilon)^{-\alpha}t
	\1_{\Omega_\epsilon}$ a.s.,
	where $\Omega_\epsilon$ is defined as the  event
	$\{\max_{1\le l\le k}\sup_{s\in[0,t]}\|b_s^{(l)}\|\le\epsilon\}.$
	Therefore,
	\begin{equation}\label{lomo:r}
		\E\left( |u_t(x)|^k\right) \ge\underline{u}_0^k \sup_{\epsilon>0}\left[
		\exp\left(\frac{k(k-1)t}{(2\epsilon
		\sqrt{\varkappa})^{\alpha}}\right)
		\cdot \P(\Omega_\epsilon)\right].
	\end{equation}
	Because of an eigenfunction expansion 
	\cite[Theorem 7.2, p.\  126]{PortStone}
	there exist constants
	 $\lambda_1=\lambda_1(d)\in(0\,,\infty)$, and $c=c(d)\in(0\,,\infty)$
	such that
	\begin{equation}\begin{split}
		\P(\Omega_\epsilon) &=\left( \P\left\{\sup_{s\in[0,t/\epsilon^2]}
		\|b_s^{(1)}\|\le 1 \right\} \right)^k \ge c^k\e^{ -
		kt\lambda_1/\epsilon^2},
	\end{split}\end{equation}
	 uniformly for all  $k\ge 2$ and $\epsilon\in(0\,, t^{\nicefrac12}]$.
	 And, in fact, $\lambda_1$ is the smallest
	positive  eigenvalue of the Dirichlet Laplacian on the unit
	ball of $\R^d$. Thus,
	\begin{equation}
		\E\left( |u_t(x)|^k\right) \ge (c\underline{u}_0)^k \sup_{\epsilon\in (0,t^{\nicefrac12}]}
		\left[\exp\left(\frac{k(k-1)t}{(2\epsilon
		\sqrt{\varkappa})^{\alpha}}-\frac{kt\lambda_1}{\epsilon^2}\right)\right].
	\end{equation}
	The supremum of the expression inside the exponential is at least
	$\text{const}\cdot
	tk\cdot k^{2/(2-\alpha)}/\varkappa^{\alpha/(2-\alpha)},$
	where ``const'' depends only on $(\alpha\,,d)$.
	This proves the asserted lower bound on the $L^k(\P)$-norm of $u_t(x)$.
	
	We adopt a different route for the upper bound.
	Let $\{\bar{R}_\lambda\}_{\lambda>0}$ denote the
	resolvent corresponding to $\sqrt{2}$ times a Brownian motion
	in $\R^d$ with diffusion coefficient $\varkappa$. In other
	words, $\bar{R}_\lambda f := \int_0^\infty\exp(-\lambda s)
	(p_{2s}*f)\,\d s=(\nicefrac12)(R_{\lambda/2}f)$.
	Next define
	\begin{equation}
		Q(k\,,\beta) := z_k\sqrt{(\bar{R}_{2\beta/k}f)(0)}
		\qquad\text{for all $\beta>0$ and $k\ge 2$,}
	\end{equation}
	where $z_k$ is the optimal constant, due to Davis
	\cite{Davis}, in the Burkholder--Davis--Gundy
	inequality for the $L^k(\P)$ norm of continuous martingales
	\cite{Burkholder,BDG,BG}.
	We can combine \cite[Theorem 1.2]{FK:TRAMS} and 
	Dalang's theorem \cite{Dalang:99}
	to conclude that, because the solution to \textnormal{(PAM)} exists,
	$(\bar{R}_ \lambda f)(0)<\infty$ for all $\lambda>0$.
	The proof of \cite[Theorem 1.3]{FK:TRAMS}
	and Eq.\ (5.35) therein [\emph{loc.\ cit.}] together  imply that
	if $Q(k\,,\beta)<1$ then
	$\e^{-\beta t/k}\|u_t(x)\|_k\le \overline{u}_0/(1-Q(k\,,\beta))$
	uniformly for all $t>0$ and $x\in\R^d$.
	In particular, if $Q(k\,,\beta)\le \tfrac12$, then 
	\begin{equation}\label{eq:goodgoal}
		\E \left(|u_t(x)|^k \right) \le \e^{\beta t}2^k\overline{u}_0^k.
	\end{equation}
	According to  Carlen and Kree  \cite{CK},
	$z_k\le 2\sqrt{k}$; this is the inequality that led also to
	\textnormal{(BDG)}. Therefore, \eqref{eq:goodgoal} holds
	as soon as $k(\bar{R}_{2\beta/k}f)(0)<\nicefrac{1}{16}$. Because
	both Brownian motion and $f$ satisfy scaling relations, a simple
	change of variables shows that
	$(\bar{R}_\lambda f)(0) = c_2
	\lambda^{-(2-\alpha)/2} \varkappa^{-\alpha/2},$
	where $c_2$ is also a nontrivial constant 
	that depends only on $(d\,,\alpha)$.
	Therefore, the condition $k(\bar{R}_{2\beta/k}f)(0)<\nicefrac{1}{16}$---shown
	earlier to be sufficient for \eqref{eq:goodgoal}---is equivalent to
	the assertion that
	$\beta> k \cdot c_3 k^{2/(2-\alpha)}/\varkappa^{\alpha/(2-\alpha)}$
	for a nontrival constant $c_3$ that depends only on
	$(d\,,\alpha)$. Now we choose
	$\beta:=2k \cdot c_3 k^{2/(2-\alpha)}/\varkappa^{\alpha/(2-\alpha)}$,
	plug this choice in \eqref{eq:goodgoal}, and deduce the upper bound.
\end{proof}

Before we proceed further, let us observe that, 
in accord with \eqref{eq:Riesz:conv},
\begin{equation}
	f(x)=\frac{\text{const}}{\|x\|^{\alpha}}  =(h*h)(x) = (h*\tilde{h})(x)
	\quad\text{with}\quad
	h(x) :=\frac{\text{const}}{\|x\|^{(d+\alpha)/2}},
\end{equation}
where  the convolution is understood in the sense of generalized functions.

 As in \eqref{rho}, we can define 
 $h_n(x):=h(x)\hat{\varrho}_n(x)$ and $f_n=(h-h_n)*(\tilde{h}-\tilde{h}_n)$.  
  
\begin{lemma}\label{lem:new}
	For all $\eta\in(0\,,1\wedge\alpha)$
	there exists a constant $A:=A(d\,,\varkappa\,,\alpha\,,\eta)\in(0\,,\infty)$
	such that 
	$( p_s* f_n)(0) \le   A n^{-\eta}\cdot s^{-(\alpha-\eta)/2}$
	for all $s>0$ and $n\ge 1$.
\end{lemma}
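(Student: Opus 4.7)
The strategy is to use the factorization $f_n = D_n * \tilde D_n$ with $D_n := h - h_n = h(1 - \hat\varrho_n)$, bound $D_n$ pointwise by interpolating the two available estimates on $1 - \hat\varrho_n$, and then reduce the resulting double integral to a classical Riesz-kernel convolution that can be evaluated via \eqref{eq:Riesz:conv}. Since $p_s$ is even, expanding gives
\begin{equation*}
(p_s * f_n)(0) = \iint_{\R^d \times \R^d} p_s(x - w)\, D_n(x)\, D_n(w)\, \d x\, \d w.
\end{equation*}
The trivial bound $1 - \hat\varrho_n \le 1$ together with \eqref{eqrho} yields $1 - \hat\varrho_n(x) \le \min(d\|x\|/n, 1) \le (d\|x\|/n)^{\eta/2}$ for every $\eta/2 \in [0, 1]$; substituting the explicit form $h(x) = c_0 \|x\|^{-(d+\alpha)/2}$ then leads to
\begin{equation*}
D_n(x)\, D_n(w) \le c_0^2\, (d/n)^{\eta}\, \|x\|^{-a} \|w\|^{-a}, \qquad a := \frac{d + \alpha - \eta}{2}.
\end{equation*}

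Next, the substitution $u := x - w$ turns the $w$-integral into the Riesz convolution $(\mathcal{R}_a * \mathcal{R}_a)(u)$. The hypotheses for \eqref{eq:Riesz:conv}---namely $a, 2a - d \in (0, d)$---both reduce to $\eta \in (0, \alpha)$, which is implied by $\eta \in (0, 1 \wedge \alpha)$ and $\alpha \in (0, d \wedge 2)$. Thus \eqref{eq:Riesz:conv} gives $(\mathcal{R}_a * \mathcal{R}_a)(u) = C_1(d, \alpha, \eta)\, \|u\|^{-(\alpha - \eta)}$, reducing the problem to estimating
\begin{equation*}
C_1 \int_{\R^d} p_s(u)\, \|u\|^{-(\alpha - \eta)}\, \d u.
\end{equation*}
The Gaussian rescaling $u = \sqrt{\varkappa s}\, v$ extracts a clean factor of $s^{-(\alpha - \eta)/2}$; the residual radial integral $\int_{\R^d} e^{-\|v\|^2/2} \|v\|^{-(\alpha - \eta)}\, \d v$ is a finite constant precisely because $\alpha - \eta < d$.

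Combining these steps produces $(p_s * f_n)(0) \le A\, n^{-\eta}\, s^{-(\alpha - \eta)/2}$ with $A = A(d, \varkappa, \alpha, \eta)$, as required. I do not anticipate any serious obstacle: the only real point of care is verifying the index constraints in the Riesz convolution formula, and the hypothesis $\eta \in (0, 1 \wedge \alpha)$ handles these with room to spare. The remaining work amounts to routine bookkeeping of the constants coming from \eqref{eq:Riesz:FT} and the explicit radial Gaussian integral.
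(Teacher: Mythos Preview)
Your proposal is correct and follows essentially the same route as the paper's proof: both bound $1-\hat\varrho_n$ by a fractional power to extract the factor $n^{-\eta}$, invoke the Riesz convolution identity \eqref{eq:Riesz:conv} to collapse the double integral to $\int_{\R^d} p_s(u)\|u\|^{-(\alpha-\eta)}\,\d u$, and finish with a Gaussian scaling. The only cosmetic difference is that the paper first uses the monotonicity bound $f_n\le h*(h-h_n)$ and places the full exponent $\eta$ on a single factor (yielding $\mathcal{R}_{(d+\alpha)/2}*\mathcal{R}_{(d+\alpha-2\eta)/2}$), whereas you keep the symmetric factorization $f_n=D_n*\tilde D_n$ and split the exponent as $\eta/2$ on each factor (yielding $\mathcal{R}_a*\mathcal{R}_a$ with $a=(d+\alpha-\eta)/2$); the index constraints and the final bound coincide.
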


\begin{proof}
	Because $f_n\le h*(h-h_n)$, it follows that
	\begin{align}\notag
		(p_s*f_n)(0) &\le \left[ (p_s*h*h)(0) - (p_s*h*h_n)(0)\right]\\
		&= \int_{\R^d}\d y\int_{\R^d}\d z\
			p_s(z) h(y)h(y-z) \left( 1-\hat{\varrho}_n(y-z) \right)\\\notag
		&\le \textnormal{const}\cdot
			\int_{\R^d}\frac{\d y}{\|y\|^{(d+\alpha)/2}}\int_{\R^d}
			\frac{p_s(z) \,\d z}{\|y-z\|^{(d+\alpha)/2}}\
			\left( 1\wedge \frac{\|y-z\|}{n}\right);
	\end{align}
	see \eqref{eqrho}. 
	
	Choose and fix some $\eta\in(0\,,1\wedge \alpha)$. Since 
	$1\wedge r\le r^\eta$ for all $r>0$,
	\begin{align}\notag
		(p_s*f_n)(0) &\le \frac{\textnormal{const}}{n^\eta}\cdot
			\int_{\R^d}\frac{\d y}{\|y\|^{(d+\alpha)/2}}\int_{\R^d}
			\frac{p_s(z)\,\d z}{\|y-z\|^{(d+\alpha-2\eta)/2}}\\\notag
		&=\frac{\textnormal{const}}{n^\eta}\cdot
			\int_{\R^d}\d y\int_{\R^d}\d z\
			\mathcal{R}_{(d+\alpha)/2}(y)p_s(z) 
			\mathcal{R}_{(d+\alpha-2\eta)/2}(z-y)\\
		&=\frac{\textnormal{const}}{n^\eta}\cdot
			\int_{\R^d} \|z\|^{-\alpha+\eta} p_s(z)\,\d z,
	\end{align}
	by \eqref{eq:Riesz:conv}, because $p_s$ is a rapidly-decreasing test function for all $s>0$.
	 A change of variable in the integral above proves the result.
\end{proof}

\begin{proposition}\label{pr:Riesz:h-h_n}
	For  every $\eta\in(0\,,1\wedge\alpha)$,
	the following holds uniformly for every $k\ge 2$, $\delta>0$, and  
	all predictable random fields $Z$:
	\begin{equation}
		\mathcal{M}_\delta^{(k)}\left(p*ZF^{(h)}
		-p*ZF^{(h_n)}\right)  \le\textnormal{const}
		\cdot \sqrt{\frac{ k}{n^\eta \cdot \delta^{(2-\alpha+\eta)/2}}}\
		\mathcal{M}_\delta^{(k)}(Z),
	\end{equation}
	where the implied constant depends only on 
	$(d\,,\varkappa\,,\alpha\,,\eta)$.
\end{proposition}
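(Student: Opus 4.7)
The plan is to imitate the proof of Proposition \ref{pr:mk} almost verbatim, with the uniform Young-type bound on $f^{(D)}$ (which exploited the hypothesis $h\in L^2(\R^d)$) replaced by the time-singularity estimate on $p_s*f_n$ supplied by Lemma \ref{lem:new}.

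First I will set $D := h-h_n$ and $f_n := D*\tilde{D}$, and use linearity of the stochastic integral together with the extension of Section 3 to identify $p*ZF^{(h)} - p*ZF^{(h_n)}$ with $p*ZF^{(D)}$ almost surely. An application of (BDG) followed by Minkowski's integral inequality in $L^{k/2}(\P)$---exactly as in the steps leading from \eqref{4.24} to \eqref{eq3.24}---bounds the $k$-th moment of $(p*ZF^{(D)})_t(x)$ by the $k/2$-power of a triple integral of $p_{t-s}(y-x) p_{t-s}(z-x) f_n(y-z) \|Z_s(y)Z_s(z)\|_{k/2}$. The Cauchy--Schwarz inequality together with the definition of $\mathcal{M}_\delta^{(k)}$ then dominates the last factor by $\e^{2\delta s}[\mathcal{M}_\delta^{(k)}(Z)]^2$.

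Next I will collapse the spatial double integral: translating $y$ and $z$ by $x$ and applying Chapman--Kolmogorov (in the form $p_{t-s}*p_{t-s} = p_{2(t-s)}$) reduces it to $(p_{2(t-s)}*f_n)(0)$, so the whole $k/2$-th moment bound becomes a single time integral in $s$ weighted by $\e^{-2\delta(t-s)}$.

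The key step---and the only place the argument genuinely departs from that of Proposition \ref{pr:mk}---is the insertion of Lemma \ref{lem:new}, which gives $(p_{2(t-s)}*f_n)(0) \le \textnormal{const}\cdot n^{-\eta} (t-s)^{-(\alpha-\eta)/2}$. Enlarging the domain of integration to $(0,\infty)$ reduces the remaining time integral to a classical Gamma-type integral whose value is a constant multiple of $\delta^{-(2-\alpha+\eta)/2}$. Convergence of this integral at the origin is the one quantitative point to check: it requires $(\alpha-\eta)/2<1$, which is automatic since Dalang's condition forces $\alpha<2$ and $\eta>0$ by hypothesis. Taking square roots and passing to the supremum over $(t\,,x)$ then delivers the stated bound.
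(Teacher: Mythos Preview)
Your proposal is correct and follows essentially the same route as the paper's own proof: both apply \textnormal{(BDG)} and Minkowski to reduce to a time integral of $(p_{2(t-s)}*f_n)(0)$ weighted by $\e^{-2\delta(t-s)}$, invoke Lemma \ref{lem:new} for the singularity estimate, and evaluate the resulting Gamma-type integral. The paper performs one additional harmless change of variables ($s=2r$) before applying Lemma \ref{lem:new}, but otherwise the arguments are identical.
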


\begin{remark}
Proposition \ref{pr:Riesz:h-h_n} compares to Proposition \ref{pr:mk}. \qed
\end{remark}

\begin{proof}
	For notational simplicity, let us write
	\begin{equation}
		\Xi :=\left\| \left( p*ZF^{(h)} \right)_t(x)-
		\left( p*ZF^{(h_n)} \right)_t(x) \right\|_k.
	\end{equation}
	We apply first \textnormal{(BDG)}, 
	and then Minkowski's inequality, to see that for all $\delta>0$,
	\begin{align}\notag
		\Xi^2 &\le 4\left[\mathcal{M}_\delta^{(k)}(Z)\right]^2 k
			\int_0^t  \d s
			\int_{\R^d}\d y\ \int_{\R^d}\d z\ p_{t-s}(y) f_n(y-z) p_{t-s}(z) \\
			\notag
		&\le 4\e^{2\delta t}\left[\mathcal{M}_\delta^{(k)}(Z)\right]^2 k
			\int_0^\infty \e^{-2\delta r}\left( p_{2r}* f_n\right)(0)\,\d r\\
		&=2\e^{2\delta t}\left[\mathcal{M}_\delta^{(k)}(Z)\right]^2 k
			\int_0^\infty \e^{-\delta s}\left( p_s* f_n\right)(0)\,\d s.
			\label{ph-phn}
	\end{align}
	The appeal to Fubini's theorem is justified since:
	(i)  $p_r$ is a rapidly decreasing test function for all $r>0$;
	(ii) $p_r*p_r=p_{2r}$ by the Chapman--Kolmogorov equation;
	and  (iii) $p_r,f_n\ge 0$ pointwise for every $r>0$ and $n\ge 1$.
	Now we apply Lemma \ref{lem:new} in order to find that
	for all $\eta\in(0\,,1\wedge\alpha)$,
	\begin{equation}\begin{split}
		\Xi^2 &\le \textnormal{const}\cdot
			\frac{\e^{2\delta t}k}{n^{\eta}}
			\left[ \mathcal{M}_\delta^{(k)}(Z)
			\right]^2\int_0^\infty 
			\e^{-\delta s} s^{-(\alpha-\eta)/2}\,\d s\\
		&=\textnormal{const}\cdot
			\frac{\e^{2\delta t}k}{n^{\eta}}
			\left[ \mathcal{M}_\delta^{(k)}(Z)
			\right]^2 \delta^{-(2-\alpha+\eta)/2}.
	\end{split}\end{equation}
	Since the right-most term is independent of $x$, we can divide both sides
	by $\exp(2\delta t)$, optimize over $t$, 
	and then take square root to complete the proof.
\end{proof}

\subsection{Localization for Riesz kernels}

The next step in our analysis of Riesz-type correlations is to establish
localization; namely results that are similar to those of Section \ref{local}
but which are applicable to the setting of Riesz kernels.

\subsection{The general case}
Recall the random fields $U^{(\beta)}$, $V^{(\beta)}$,
and $Y^{(\beta)}$ respectively from \eqref{eq:U_beta},
\eqref{eq:V_beta}, and \eqref{eq:Y_beta}.
We begin by studying the nonlinear problem \textnormal{(PAM)}
in the presence of noise whose spatial correlation is determined
by $f(x)=\textnormal{const}\cdot\|x\|^{-\alpha}$.

\begin{proposition}\label{pr:Riesz:u-U}
	Let $u$ denote the solution to \textnormal{(PAM)}. For every $T>0$ and $\eta\in(0\,,1\wedge\alpha)$ 
	there exist finite and positive
	constants $\ell_i:=\ell_i(d\,,\alpha\,,T,\varkappa\,,\eta)$ 
	$[i=1,2]$, such that uniformly for $\beta>0$ and $k\ge 2$,
	\begin{equation}
		\sup_{t\in[0,T]} \sup_{x\in\R^d} 
		\E\left(\left |u_t(x)-U_t^{(\beta)}(x)\right|^k\right)
		\le\left(\frac{\ell_2 k}{\beta^\eta} \right)^{k/2}
		 \e^{\ell_1k^{(4-\alpha)/(2-\alpha)}}.
	\end{equation}
\end{proposition}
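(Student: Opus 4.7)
The strategy is to mirror Lemma \ref{lem:localization}, replacing every appearance of $f(0)<\infty$ with a Riesz-type estimate. Set $\Delta_t(x):=u_t(x)-U^{(\beta)}_t(x)$ and decompose
\[
\Delta=\bigl(u-V^{(\beta)}\bigr)+\bigl(V^{(\beta)}-Y^{(\beta)}\bigr)+\bigl(Y^{(\beta)}-U^{(\beta)}\bigr),
\]
using $V^{(\beta)}$, $Y^{(\beta)}$ from \eqref{eq:V_beta} and \eqref{eq:Y_beta}. Because \textnormal{(PAM)} has $\sigma=\mathrm{id}$, the first piece simplifies to $u-V^{(\beta)}=p*\Delta\,\dot F^{(h)}$ and no Lipschitz bookkeeping is needed. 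As a preliminary, I would extend the resolvent/fixed-point argument behind the upper bound in Proposition \ref{pr:pa_mom_Riesz} to $U^{(\beta)}$ itself: since $h_\beta\le h$ pointwise implies $h_\beta*\tilde h_\beta\le f$, and the spatial truncation in \eqref{eq:U_beta} only reduces the quadratic variation, the same Picard iteration yields, uniformly in $\beta$,
\[
\sup_{\beta>0}\sup_{t\in[0,T]}\sup_{x\in\R^d}\E\bigl(|U^{(\beta)}_t(x)|^k\bigr)\le D\,\e^{D k^{(4-\alpha)/(2-\alpha)}}.
\]

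I would then bound each piece in the $\mathcal{M}^{(k)}_\delta$ norm with $\delta$ chosen proportional to $k^{2/(2-\alpha)}$. For the middle piece, Proposition \ref{pr:Riesz:h-h_n} applied to $Z=\sigma(U^{(\beta)})=U^{(\beta)}$ and the fixed $\eta\in(0,1\wedge\alpha)$ gives
\[
\mathcal{M}^{(k)}_\delta\bigl(V^{(\beta)}-Y^{(\beta)}\bigr)\le C\sqrt{\frac{k}{\beta^{\eta}\,\delta^{(2-\alpha+\eta)/2}}}\;\mathcal{M}^{(k)}_\delta\bigl(U^{(\beta)}\bigr).
\]
The boundary piece $Y^{(\beta)}-U^{(\beta)}$ is controlled via \textnormal{(BDG)} on the stochastic integral over $(0,t)\times B^c$ with $B:=[x-\beta\sqrt t,x+\beta\sqrt t]$; using $h_\beta*\tilde h_\beta\le f$, dropping the $B^c$ restriction on the second spatial variable, the Riesz estimate $(p_r*f)(0)\le C r^{-\alpha/2}$, and the heat-kernel tail bound \eqref{eqp}, the resulting double integral is at most $C(t-s)^{-\alpha/2}\exp(-\beta^2 t/(4\varkappa(t-s)))$. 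Its time integral decays faster than any polynomial in $\beta^{-1}$, so this piece is negligible relative to the middle one. Finally, the first piece $u-V^{(\beta)}=p*\Delta\,\dot F^{(h)}$ is estimated exactly as in \eqref{ph-phn} but with $f$ in place of $f_n$, yielding
\[
\mathcal{M}^{(k)}_\delta\bigl(u-V^{(\beta)}\bigr)\le C\sqrt{k\,\delta^{-(2-\alpha)/2}}\;\mathcal{M}^{(k)}_\delta(\Delta).
\]

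Choose $\delta:=C_1 k^{2/(2-\alpha)}$ large enough that $C\sqrt{k\delta^{-(2-\alpha)/2}}\le 1/2$, absorb the resulting $\tfrac12\mathcal{M}^{(k)}_\delta(\Delta)$ onto the left side, and substitute the bounds on the other two pieces together with the uniform $U^{(\beta)}$-moment estimate. This yields $\mathcal{M}^{(k)}_\delta(\Delta)\le C'\sqrt{k/\beta^{\eta}}\cdot \e^{C'k^{2/(2-\alpha)}}$. Undoing the weight on $t\in[0,T]$ costs a factor $\e^{\delta T}$; raising to the $k$-th power and regrouping via the identity $k\cdot k^{2/(2-\alpha)}=k^{(4-\alpha)/(2-\alpha)}$ produces the claimed bound. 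The main obstacle is the preliminary $\beta$-uniform moment estimate for $U^{(\beta)}$: the kernel monotonicity $f_\beta\le f$ is immediate, but the restricted domain of integration in \eqref{eq:U_beta} forces one to rerun Dalang's fixed-point scheme in the weighted $L^k$ norm with the Carlen--Kree optimal \textnormal{(BDG)} constant carefully enough to preserve both the sharp exponent $(4-\alpha)/(2-\alpha)$ and the correct $\varkappa^{\alpha/(2-\alpha)}$ dependence.
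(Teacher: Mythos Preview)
Your proposal is correct and follows essentially the same route as the paper: the same three-term decomposition via $V^{(\beta)}$ and $Y^{(\beta)}$, Proposition~\ref{pr:Riesz:h-h_n} for the middle piece, the heat-kernel tail plus $(p_r*f)(0)\le C r^{-\alpha/2}$ for the boundary piece, the same estimate for $u-V^{(\beta)}$, and the same choice $\delta\propto k^{2/(2-\alpha)}$ to close the fixed-point inequality. The paper likewise asserts the $\beta$-uniform moment bound for $U^{(\beta)}$ by ``mimicking the proof of Proposition~\ref{pr:pa_mom_Riesz}'' and omits the details, so your identification of that step as the one requiring care is apt.
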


\begin{proof}
	Notice that
	\begin{equation}\begin{split}
		V^{(\beta)}_t(x) &= (p_t*u_0)(x)+ \left( p*U^{(\beta)} F^{(h)}\right)_t(x),\\
		Y^{(\beta)}_t(x) &= (p_t*u_0)(x)+ \left( p*U^{(\beta)} F^{(h_\beta)}\right)_t(x).
	\end{split}\end{equation}
	Proposition \ref{pr:Riesz:h-h_n} tells us that for all
	$\eta\in(0\,,1\wedge\alpha)$,
	\begin{equation}
		\mathcal{M}_\delta^{(k)}\left( V^{(\beta)}-
		Y^{(\beta)}\right) \le C_1\cdot
		\sqrt{\frac{k}{ \beta^\eta\cdot
		\delta^{(2-\alpha+\eta)/2}}}\, \mathcal{M}_\delta^{(k)}(U^{(\beta)}),
	\end{equation}
	where $C_1$ is a positive and finite constant that depends only on
	$(d\,,\varkappa\,,\alpha\,,\eta)$. 
	It follows from the definition \eqref{eq:M} that
	\begin{equation}
	\mathcal{M}_\delta^{(k)}\left( V^{(\beta)}-
		Y^{(\beta)}\right)\le\textnormal{const}\cdot
		\sqrt{\frac{k\varkappa^{(2-\alpha)/2}}{ \beta^\eta
		\cdot\delta^{(2-\alpha+\eta)/2}}}
	  \mathcal{M}_\delta^{(k)}(U^{(\beta)}),
	\end{equation}
	where ``const'' depends only on $(d\,,\varkappa\,,\alpha\,,\eta)$.
	In order to estimate the latter $\mathcal{M}_\delta^{(k)}$-norm we mimic
	the proof of the first inequality in Proposition \ref{pr:pa_mom_Riesz}
	to see that, for the same constant $\overline{c}$ as in the latter proposition,
	$\log \| U^{(\beta)}_t(x) \|_k \le
	\overline{u}_0+\overline{c}t k^{2/(2-\alpha)}/\varkappa^{\alpha/(2-\alpha)}$
	uniformly for all $x\in\R^d$, $t,\varkappa,\beta>0$, and $k\ge 2$.
	We omit the lengthy details because they involve making only small
	changes to the proof of the second inequality in Proposition
	\ref{pr:pa_mom_Riesz}. The end result is that
	\begin{equation}\label{cond:beta:lb0}
		\mathcal{M}_\delta^{(k)} (U^{(\beta)}) \le
		\sup_{t>0} \left[\overline{u}_0\exp\left\{ -\delta t + \overline{c}t
		\frac{k^{2/(2-\alpha)}}{\varkappa^{\alpha/(2-\alpha)}}\right\}\right]
		= \overline{u}_0,
	\end{equation}
	provided that 
	\begin{equation}\label{cond:beta:lb}
		\delta  > \overline{c} k^{2/(2-\alpha)}/\varkappa ^{\alpha/(2-\alpha)}.
	\end{equation}
	Therefore, the following is valid
	whenever $\delta$ satisfies \eqref{cond:beta:lb}:
	\begin{equation}\label{cond:beta:lb1}
		\mathcal{M}_\delta^{(k)}\left( V^{(\beta)}-Y^{(\beta)} \right) 
		\le C_1\cdot
		\sqrt{\frac{k}{ \beta^\eta\cdot\delta^{(2-\alpha+\eta)/2}}},
	\end{equation}
	where $C_1$ depends only on $(d\,,\varkappa\,,\alpha\,,\eta\,,\sigma(0)
	\,,\lip,\, \overline{u}_0)$.

	In order to bound $\|Y^{(\beta)}_t(x)-U^{(\beta)}_t(x)\|_k$,
	we apply \textnormal{(BDG)} and deduce that
	\begin{align}
		&\E\left( |Y_t^{(\beta)}(x)-U_t^{(\beta)}(x)|^k\right)\\\notag
		&\le \E\left( \left| 4k \int_0^t\d s
			\int_{[x-\beta\sqrt t,x+\beta\sqrt t]^c}\d y
			\int_{[x-\beta\sqrt t,x+\beta \sqrt t]^c}\d z\  
			h^{(*2)}_\beta(z-y) \mathcal{W}
			\right|^{k/2} \right)\\\notag
		&\le \left(  4k\int_0^t\d s
			\int_{[x-\beta\sqrt t,\,x+\beta\sqrt t]^c}\d y
			\int_{[x-\beta\sqrt t, \, x+\beta\sqrt t]^c}\d z\  f(z-y) 
			\| \mathcal{W} \|_{k/2}
			\right)^{k/2},
	\end{align}
	where we have used Minkowski's inequality in the last bound.
	Here, $h^{(*2)}_\beta := h_\beta * \tilde{h}_\beta$, and
	$\mathcal{W} := p_{t-s}(y-x)
			p_{t-s}(z-x)| U_s^{(\beta)}(y)|
			\cdot| U_s^{(\beta)}(z)|$.
	In particular,
	\begin{equation}
		\| \mathcal{W}\|_{k/2} \le
			p_{t-s}(y-x)p_{t-s}(z-x)\sup_{y\in \R^d}
			\left\| U^{(\beta)}_s(y) \right\|_k ^2,
	\end{equation}
	thanks to the Cauchy--Schwarz inequality. By the definition
	\eqref{eq:M} of $\mathcal{M}_\delta^{(k)}$,
	\begin{equation}
		\sup_{w\in\R^d}\left\| U^{(\beta)}_s(w) \right\|_k \le
		\e^{\delta s} \mathcal{M}_\delta^{(k)}\left( U^{(\beta)}\right)
		\qquad\text{for all $s>0$}.
	\end{equation}
	Therefore,
	\begin{equation}
		\| \mathcal{W}\|_{k/2} 
		\le \textnormal{const}\cdot
		\e^{2\delta s}p_{t-s}(y-x) p_{t-s}(z-x)
		\mathcal{M}_\delta^{(k)} (U^{(\beta)} )^2.
	\end{equation}
	
	Let us define 
	\begin{equation}\label{def:Theta}
		\Theta := \int_0^t\d s\mathop{\iint}_{\mathcal{A}\times
		\mathcal{A}}\d y\,\d z\ \  f(z-y)
		\e^{2\delta s}p_{t-s}(y-x) p_{t-s}(z-x),
	\end{equation}
	where we have written $\mathcal{A}:=[x-\beta\sqrt t\,,x+\beta\sqrt t]^c$,
	for the sake of typographical ease.
	Our discussion so far implies that
	\begin{equation}\label{eq:Theta:est}
		\left\| Y^{(\beta)}_t(x)-U^{(\beta)}_t(x)\right\|_k
		\le \textnormal{const}\cdot\sqrt{k}\
	 \mathcal{M}_\delta^{(k)} (U^{(\beta)} )
	\cdot\Theta^{\nicefrac12}.
	\end{equation}
	 We may estimate $\Theta$ as follows:
	\begin{equation}\begin{split}
		\Theta &\le \int_0^t
			\sup_{w\in\R^d}(p_{t-s}*f)(w)
			\e^{2\delta s}\, \d s\int_{[x-\beta\sqrt t,x+\beta\sqrt t]^c}
			p_{t-s}(y-x)\,\d y\\
		&\hskip.3in\le \textnormal{const}\cdot\int_0^t
			\sup_{w\in\R^d}(p_{t-s}*f)(w)
			\exp\left( -\frac{d \beta^2 t}{4\varkappa(t-s)}+2\delta s\right) \, \d s,
	\end{split}\end{equation}
	where we used \eqref{eqp} and ``const'' depends only on $(d\,,\alpha)$. 
	Because $p_{t-s}*f$ is a continuous positive-definite function,
	it is maximized at the origin. Thus, by scaling,
	\begin{equation}\label{eq:p*f}
		\sup_{w\in\R^d}\left( p_{t-s}*f\right)(w)  \le
		\frac{\textnormal{const}}{(t-s)^{\alpha/2}\varkappa^{\alpha/2}},
	\end{equation}
	where ``const'' depends only on $(d\,,\alpha)$. Consequently,
	\begin{align}
		\Theta &\le  \frac{\textnormal{const}}{\varkappa^{\alpha/2}}
			\cdot \int_0^t
			\exp\left( -\frac{d \beta^2 t}{4\varkappa(t-s)}+2\delta s\right) 
			\,\frac{\d s}{(t-s)^{\alpha/2}}\\\notag
		&\le\frac{\textnormal{const}}{\varkappa^{\alpha/2}}
			\cdot \e^{2\delta t} \,t^{(2-\alpha)/2}\int_0^1
			\e^{- d\beta^2 /(4\varkappa s)}
			\,\frac{\d s}{s^{\alpha/2}} 
			\le \frac{\textnormal{const}}{\varkappa^{\alpha/2}}
			t^{\nicefrac{(2-\alpha)}{2}}
			\exp\left(2\delta t-\frac{d \beta^2}{4\varkappa}\right).
	\end{align}
	It follows from the preceding discussion and \eqref{eq:Theta:est} that
	\begin{equation}\label{cond:beta:lb2}
		\mathcal{M}^{(k)}_\delta\left( Y^{(\beta)}-U^{(\beta)}\right)
		\le \frac{\textnormal{const}}{\varkappa^{\alpha/4}}
		\cdot \e^{-d\beta^2/(8\varkappa)}\, \sqrt{k},
	\end{equation}
	provided that $\delta$ satisfies  \eqref{cond:beta:lb}.
	
	Next we note that
	\begin{equation}\begin{split} 
		& \|u_t(x)-V_t^{(\beta)}(x)\|_k\\
		&\le \left\|
			\int_{(0,t)\times \R^d} p_{t-s}(y-x)\left[u_s(y)-U_s^{(\beta)}(y)\right]
			F^{(h)}(\d s\,\d y)\right\|_k \\
		&\le  \textnormal{const}\cdot\sqrt{k
			\int_0^t\d s\int_{\R^d}\d y\int_{\R^d}\d z\
			f(y-z) \widetilde{\mathcal{T}}},
	\end{split}\end{equation}
	where
	\begin{equation}\begin{split}
		\widetilde{\mathcal{T}} &:= 
		p_{t-s}(y-x)p_{t-s}(z-x)\left\| u_s(z)-U_s^{(\beta)}(z)\right\|_k
		\cdot\left\| u_s(y)-U^{(\beta)}_s(y) \right\|_k\\
		&\leq p_{t-s}(y-x)p_{t-s}(z-x)\sup_{y\in \R^d}\left\| u_s(y)-U_s^{(\beta)}(y)\right\|^2_k.
	\end{split}\end{equation}
	We then obtain 
	\begin{equation}\begin{split}
		\|u_t(x)-V_t^{(\beta)}(x)\|_k
		\le  \textnormal{const}\cdot \left(k\int_0^t
		\frac{\sup_{y}\|u_s(y)-U_s^{(\beta)}(y)\|_k^2}{\left((t-s)
		\varkappa\right)^{\alpha/2}}\,\d s\right)^{\nicefrac12},
	\end{split}\end{equation}
	 from similar calculations as before; see the derivation 
	 of \eqref{eq:p*f}. Consequently,
	 \begin{equation}\begin{split}\label{eq:u-Vb}
		 \mathcal{M}^{(k)}_\delta\left(u-V^{(\beta)}\right) 
		 	&\le \textnormal{const} \cdot k^{\nicefrac12} \mathcal{M}^{(k)}_\delta
				\left(u-U^{(\beta)}\right) \left(\int_0^\infty  \frac{\e^{-2\delta r}}{%
				\left(\varkappa r\right)^{\alpha/2}}\,\d r\right)^{\nicefrac12}\\
			&= \frac{\textnormal{const}\cdot k^{\nicefrac12}}{%
				\varkappa^{\alpha/4}\delta^{(2-\alpha)/4}}\,
				\mathcal{M}^{(k)}_\delta\left(u-U^{(\beta)}\right).
	 \end{split}\end{equation}
	Next we apply the decomposition \eqref{u:decompose} 
	and the bounds in \eqref{eq:u-Vb}, \eqref{cond:beta:lb2}, and  
	\eqref{cond:beta:lb1} 
	to see that
	\begin{align}
		&\mathcal{M}^{(k)}_\delta\left( u- U^{(\beta)}\right)\\\notag
		&\le \mathcal{M}^{(k)}_\delta\left(u - V^{(\beta)}\right)
			+ \mathcal{M}_\delta^{(k)}\left( V^{(\beta)}-Y^{(\beta)}\right)
			+ \mathcal{M}^{(k)}_\delta\left( Y^{(\beta)}-U^{(\beta)}\right) \\\notag
		&\le \frac{\textnormal{const}\cdot k^{\nicefrac12}}{%
			\varkappa^{\alpha/4}\delta^{(2-\alpha)/4}}
			\mathcal{M}^{(k)}_\delta\left(u-U^{(\beta)}\right) +C_1\cdot
			\sqrt{\frac{k}{\beta^\eta\cdot \delta^{(2-\alpha+\eta)/2}}}
			+ \frac{\textnormal{const}\cdot k^{\nicefrac12}}{\varkappa^{\alpha/4}}
			 \e^{-d\beta^2/(8\varkappa)}.
	\end{align}
	We now choose $\delta := Ck^{2/(2-\alpha)}/\varkappa^{\alpha/(2-\alpha)}$ 
	with $C>\overline{c}$ so large that the coefficient of
	$\mathcal{M}^{(k)}_\delta (u-U^{(\beta)} )$ in the preceding bound,
	is smaller that $\nicefrac12$. Because $\delta$ has a lower bound
	that holds uniformly for all $k\ge 1$, the preceding implies that
	\begin{equation}
		\mathcal{M}^{(k)}_\delta\left( u- U^{(\beta)}\right)
		\le \textnormal{const}\cdot\sqrt k\,\left[
		\beta^{-\eta/2} +  \e^{-d\beta^2/(8\varkappa)} \right] \le
		\textnormal{const}\cdot\sqrt k\, \beta^{-\eta/2},
	\end{equation}
	which has the desired result.
\end{proof}	

Recall the $n$th level Picard-iteration approximation $U^{(\beta,n)}_t(x)$ 
of $U_t^{(\beta)}(x)$ 
defined in \eqref{ubn}. The next two lemmas are the Picard-iteration 
analogues of Lemmas \ref{lem:u-Ubl} and \ref{lem:un-ind}.

\begin{lemma}\label{reisz:u-Ubl} 
	For every $T>0$ and $\eta\in(0\,,1\wedge\alpha)$ 
	there exist finite and positive
	constants $\ell_i:=\ell_i(d\,,\alpha\,,T,\varkappa\,,\eta\,,\sigma)$ 
	$[i=1,2]$, such that uniformly for $\beta>0$ and $k\ge 2$
	\begin{equation}
		\sup_{t\in[0,T]} \sup_{x\in\R^d} 
		\E\left(\left |u_t(x)-U_t^{(\beta,\, [\log \beta]+1)}(x)\right |^k\right)
		\le \left(\frac{\ell_2 k}{\beta^\eta} \right)^{k/2}
		\e^{\ell_1k^{(4-\alpha)/(2-\alpha)}}.
	\end{equation}
\end{lemma}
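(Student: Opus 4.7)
\textbf{Proof proposal for Lemma \ref{reisz:u-Ubl}.}
The plan mirrors the proof of Lemma \ref{lem:u-Ubl}, substituting the Riesz-kernel version of the localization estimate (Proposition \ref{pr:Riesz:u-U}) for Lemma \ref{lem:localization}, and substituting the correct Riesz scaling of the resolvent $(R_\beta f)(0)$ into the Foondun--Khoshnevisan Picard-iteration scheme. The triangle inequality
\begin{equation*}
	\left\|u_t(x)-U_t^{(\beta,n)}(x)\right\|_k\le
	\left\|u_t(x)-U_t^{(\beta)}(x)\right\|_k
	+\left\|U_t^{(\beta)}(x)-U_t^{(\beta,n)}(x)\right\|_k,
\end{equation*}
reduces matters to bounding the second term by something comparable to (or smaller than) the first, whose size is already controlled by Proposition \ref{pr:Riesz:u-U}.

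To bound the Picard-difference term, I would replicate the argument of Lemma \ref{lem:u-Ubl} verbatim, except that the estimate on the Foondun--Khoshnevisan quantity $Q(k,\theta)=\sqrt{4k\lip^2\Upsilon(2\theta/k)}$ must be carried out for Riesz correlations. Here the scaling identity $(\bar R_\lambda f)(0)=c\,\lambda^{-(2-\alpha)/2}\varkappa^{-\alpha/2}$, already used in the proof of Proposition \ref{pr:pa_mom_Riesz}, gives
\begin{equation*}
	Q(k,\theta)\le\textnormal{const}\cdot
	\frac{k^{(4-\alpha)/4}}{\varkappa^{\alpha/4}\,\theta^{(2-\alpha)/4}}.
\end{equation*}
Therefore the contraction condition $Q(k,\theta)\le\e^{-1}$ is achieved by choosing
$\theta:=D''k^{(4-\alpha)/(2-\alpha)}/\varkappa^{\alpha/(2-\alpha)}$ for a suitably large finite constant $D''$, which translates, via $\|X\|_{k,\theta}=\mathcal{M}^{(k)}_{\theta/k}(X)$, to
\begin{equation*}
	\mathcal{M}^{(k)}_\delta\!\left(U^{(\beta)}-U^{(\beta,n)}\right)
	\le \textnormal{const}\cdot\e^{-n}
	\qquad\text{with }\ \delta:=D''k^{2/(2-\alpha)}/\varkappa^{\alpha/(2-\alpha)}.
\end{equation*}

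Specializing this to $t\in[0,T]$ and raising to the $k$th power yields
\begin{equation*}
	\E\!\left(\left|U_t^{(\beta)}(x)-U_t^{(\beta,n)}(x)\right|^k\right)
	\le \textnormal{const}^{\,k}\,\e^{-kn}\,\e^{\ell_1' k^{(4-\alpha)/(2-\alpha)}},
\end{equation*}
for a constant $\ell_1'$ depending only on $(T,d,\alpha,\varkappa)$. Setting $n:=[\log\beta]+1$ makes $\e^{-kn}\le\beta^{-k}$, and since $\beta^{-k}\le\beta^{-k\eta/2}$ for every $\eta\in(0,1\wedge\alpha)\subset(0,2)$ and all $\beta\ge 1$, this Picard-tail is dominated by the target bound $(\ell_2 k/\beta^\eta)^{k/2}\e^{\ell_1 k^{(4-\alpha)/(2-\alpha)}}$. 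Combining with Proposition \ref{pr:Riesz:u-U} through the triangle inequality finishes the proof after adjusting $\ell_1,\ell_2$.

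The one step requiring genuine care—the ``hard part''—is the scaling bookkeeping in $Q(k,\theta)$: unlike the case $h\in L^2(\R^d)$ of Lemma \ref{lem:u-Ubl}, here $\Upsilon$ is an unbounded function of $k/\theta$ with a $k$-dependent exponent, so one must verify that a single choice of $\theta$ simultaneously produces geometric contraction in $n$ and matches the $k^{(4-\alpha)/(2-\alpha)}$-growth dictated by the moment bounds of Proposition \ref{pr:pa_mom_Riesz}. The exponent arithmetic $(4-\alpha)/(2-\alpha)-1=2/(2-\alpha)$ makes these two requirements compatible, which is why the lemma has the stated form.
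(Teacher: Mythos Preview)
Your proposal is correct and follows exactly the route the paper indicates: the paper skips the proof entirely, saying only that it is ``entirely similar'' to Lemma \ref{lem:u-Ubl} but with Proposition \ref{pr:Riesz:u-U} in place of Lemma \ref{lem:localization}. You have supplied precisely the missing details---the triangle-inequality decomposition, the Riesz-kernel scaling $Q(k,\theta)\le\textnormal{const}\cdot k^{(4-\alpha)/4}\varkappa^{-\alpha/4}\theta^{-(2-\alpha)/4}$, the choice $\theta=D''k^{(4-\alpha)/(2-\alpha)}\varkappa^{-\alpha/(2-\alpha)}$ forcing contraction, and the exponent arithmetic $(4-\alpha)/(2-\alpha)-1=2/(2-\alpha)$ reconciling $\delta=\theta/k$ with the moment growth in Proposition \ref{pr:pa_mom_Riesz}---and your bookkeeping is right.
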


\begin{lemma}\label{riesz:un-ind}
	Choose and fix $\beta\ge 1,\, t>0$ and $n\ge 1$. 
	Also fix $x^{(1)},x^{(2)},\cdots \in \mathbf{R}^d$ such that 
	$D(x^{(i)}\,,x^{(j)})\ge 2 n\beta (1+\sqrt{t})$. Then 
	$\{ U_t^{(\beta, n)}(x^{(j)})\}_{j\in \Z}$ are independent 
	random variables. 
\end{lemma}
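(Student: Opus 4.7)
My plan is to follow the proof of Lemma \ref{lem:un-ind} without substantive modification, since the key property it uses---that $h_\beta$ has compact spatial support in $[-\beta,\beta]^d$---survives in the Riesz setting: by the definition $h_\beta(x) = h(x)\hat{\varrho}_\beta(x)$ and the explicit formula for $\hat{\varrho}_\beta$, the function $h_\beta$ is supported in $[-\beta,\beta]^d$ regardless of whether $h$ itself lies in $L^2(\R^d)$. As a first step, by unfolding the Picard recursion \eqref{ubn} inductively in $n$, I show that $U^{(\beta,n)}_t(x)$ is a measurable function of the values $\{U^{(\beta,1)}_r(y):  r \in [0,t],\, y \in [x-(n-1)\beta\sqrt{t},\, x+(n-1)\beta\sqrt{t}]\}$, because each Picard step enlarges the region of spatial dependence by at most $\beta\sqrt{t}$ in every coordinate.

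Next, because $U^{(\beta,0)}_s(z) = u_0(z)$ is deterministic, the random field $\{U^{(\beta,1)}_r(y)\}_{r\in[0,t],\,y\in\R^d}$ is Gaussian plus a deterministic drift, being a Wiener integral against the noise $F^{(h_\beta)}$. Hence independence within this family reduces to the vanishing of covariances. Using the isometry \eqref{eq:isometry} for $F^{(h_\beta)}$, the covariance of $U^{(\beta,1)}_r(y)$ and $U^{(\beta,1)}_{r'}(y')$ is a nonnegative integral whose integrand contains the factor $(h_\beta * \tilde{h}_\beta)(z-z')$, with $z$ ranging over the $\ell^\infty$-box of radius at most $\beta\sqrt{t}$ around $y$ and $z'$ over the corresponding box around $y'$. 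The convolution $h_\beta * \tilde h_\beta$ vanishes outside $[-2\beta,2\beta]^d$; therefore, whenever $D(y,y')\ge 2\beta(1+\sqrt{t})$, every coordinate satisfies $|z_l-z'_l| \ge D(y,y')-2\beta\sqrt{t}\ge 2\beta$, forcing the integrand to vanish identically, and the covariance is zero.

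For the given points with $D(x^{(i)},x^{(j)})\ge 2n\beta(1+\sqrt{t})$, the corresponding boxes of radius $(n-1)\beta\sqrt{t}$ around each $x^{(j)}$ are pairwise $D$-separated by at least $2n\beta(1+\sqrt{t}) - 2(n-1)\beta\sqrt{t} = 2n\beta + 2\beta\sqrt{t} \ge 2\beta(1+\sqrt{t})$. Applying the previous step at each pair of boxes and using that a Gaussian family with block-diagonal covariance is block-independent, the restrictions of $U^{(\beta,1)}$ to these pairwise-separated space-time slabs are jointly independent; the measurability established in the first step then transfers this to independence of $\{U^{(\beta,n)}_t(x^{(j)})\}_j$. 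I do not foresee any genuine obstacle here: the entire argument depends only on the local compact-support property of $h_\beta$, which is unaffected by the Riesz-type behavior of $h$ at infinity.
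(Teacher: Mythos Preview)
Your proposal is correct and follows essentially the same approach as the paper, which explicitly omits the proof here by referring back to Lemma~\ref{lem:un-ind}. Your argument is in fact somewhat more detailed than the paper's sketch of Lemma~\ref{lem:un-ind}, and you correctly identify that the only ingredient needed---the compact support of $h_\beta$ in $[-\beta,\beta]^d$, hence of $h_\beta*\tilde h_\beta$ in $[-2\beta,2\beta]^d$---carries over to the Riesz setting because it comes solely from the factor $\hat\varrho_\beta$.
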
	

We will skip the proofs, as they are entirely similar to the respective proofs
of Lemmas \ref{lem:u-Ubl} and \ref{lem:un-ind}, but apply the method of
proof of Proposition \ref{pr:Riesz:u-U} in place of  Lemma \ref{lem:localization}.

\subsection{Proof of Theorem \ref{th:pa_Riesz}}

The proof of this theorem is similar to that of 
Theorem \ref{th:pa_f(0)Finite}. Thanks to Proposition \ref{pr:pa_mom_Riesz}
and  \cite[Lemma 3.4]{CJK}, we have the following: There
exist positive and finite constants $a<b$, independently of
$\varkappa>0$, such that for all $x\in\R^d$ and $\lambda>\e$,
\begin{equation}
	a\e^{-b(\log \lambda)^{(4-\alpha)/2}\varkappa^{\alpha/2}}
	\le \P\left\{|u_t(x)|\ge \lambda \right\}
	\le b\e^{-a(\log \lambda)^{(4-\alpha)/2}\varkappa^{\alpha/2}}.
\end{equation}
	
Define, for the sake of typographical ease, 
\begin{equation}
	\mathcal{E}_M := \mathcal{E}_{M,\varkappa}(R)
	:= \exp\left(\frac{M\cdot (\log R)^{2/(4-\alpha)}}{%
	\varkappa^{\alpha/(4-\alpha)}}\right)
	\qquad \text{for all $M>0$}.
\end{equation}
For the lower bound, we  again choose $N$ points 
$x^{(1)},\ldots,x^{(N)}$ such that 
$D(x^{(i)}\,,x^{(j)})\ge 2n\beta(1+\sqrt t)$ whenever $i\neq j$;
see \eqref{d} for the definition of $D(x\,,y)$. 
Let $n :=[\log \beta]+1$ and choose and fix $\eta\in(0\,,1\wedge\alpha)$.
We apply Proposition  \ref{pr:Riesz:u-U} 
and the independence
of the $U^{(\beta,n)}(x^{(j)})$'s (Lemma  \ref{riesz:un-ind}) to see that
\begin{align}\notag
	&\P\left\{ \max_{1\le j\le N}|u_t(x^{(j)})|< 
		\mathcal{E}_M\right\}\\
	&\le  \left( 1- \P\left\{  | U_t^{(\beta,n)}(x^{(1)})|\ge 2 
		\mathcal{E}_M\right\} \right)^N
		+ \textnormal{const}\cdot
		\frac{N}{\beta^{k\eta/2}\mathcal{E}_M^k}\\
		\notag
	&\le \left( 1- \left[\P \left\{ | u_t(x^{(1)})|\ge 3\mathcal{E}_M\right\}
		-  \textnormal{const}\cdot
		\frac{N}{\beta^{k\eta/2}}\right] \right)^N
		+ \textnormal{const}\cdot
		\frac{N}{\beta^{k\eta/2}},
\end{align}
since $\mathcal{E}_M$ is large for $R$ sufficently large. Notice that the implied constants depend on 
$(\varkappa\,, t\,, k\,, d, \alpha\,,\eta\,,\sigma)$. 
Now we choose the various parameters involved [and in this order]:
Choose and fix some $\nu\in(0\,,1)$,
and then set $N:=\lceil R^{\nu}\rceil^d$
and $\beta:=R^{1-\nu}$. The following is valid for all $M>0$
sufficiently small, every $k$ sufficiently large,
and for the mentioned choices of $N$ and $\beta$:
\begin{equation}
	 \P\left\{ \max_{1\le j\le N}|u_t(x^{(j)})|<
	 \mathcal{E}_M\right\}
	 \le \textnormal{const}\cdot R^{-2}.
\end{equation}
Borel-Cantelli Lemma and a simple monotonicity argument 
together yield the bound,
\begin{equation}
	\liminf_{R\to\infty} \frac{\log u_t^{*}(R)}{%
	\left( \log R\right)^{2/(4-\alpha)}}>
	\frac{C}{\varkappa^{\alpha/(4-\alpha)}}
	\qquad\text{a.s.},
\end{equation}
where $C$ does not depend on $\varkappa$. For the other bound, we start with a modulus of continuity estimate, viz.,
\begin{equation}
 \|u_t(x)-u_t(y)\|_{2k} \le \text{const} \cdot \|x-y\| + \left(8k \int_0^t 
	\sup_{a\in\R^d}\|u_s(a)\|_{2k}^2 \
	\mathcal{I}_s\,\d s\right)^{1/2},
\end{equation}
where $\mathcal{I}_s:={\iint}_{\R^d\times\R^d}
\d w\,\d z\  |\mathcal{H}(w)\mathcal{H}(z)|f(w-z),$
for $\mathcal{H}(\xi) := p_{t-s}(\xi-x)-p_{t-s}(\xi-y)$
for all $\xi\in\R^d$.
Because of Proposition \ref{pr:pa_mom_Riesz}, we can simplify our
estimate to the following:
\begin{equation}
	\|u_t(x)-u_t(y)\|_{2k}
	\le \text{const} \cdot \|x-y\| + \overline{u}_0\e^{2\overline{c}t (2k)^{2/(2-\alpha)}
	\varkappa^{-\alpha/(2-\alpha)}}
	\left(8k \int_0^t \mathcal{I}_s\,\d s\right)^{1/2}.
\end{equation}
The simple estimate
$\int_{\R^d}|\mathcal{H}(z)| f(w-z)\,\d z \le
2 \sup_{z\in\R^d}(p_{t-s}*f)(z)$,
together with \eqref{eq:p*f} yields
\begin{equation}
	\mathcal{I}_s
	\le \frac{\textnormal{const}}{(t-s)^{\alpha/2}\varkappa^{\alpha/2}}
	\cdot\int_{\R^d} |\mathcal{H}(w)|\,\d w
	\le  \frac{\textnormal{const}}{(t-s)^{\alpha/2}}
	\cdot\left(\frac{\|x-y\|}{(t-s)^{\nicefrac12}}\wedge 1\right),
\end{equation}
where ``const'' does not depend on $(x\,,y\,,s\,,t)$,
but might depend on $\varkappa$;
see Lemma \ref{p_bound} for the last inequality. These remarks, and
some computations together show that, uniformly for
all $x,y\in\R^d$ that satisfy $\|x-y\|\le 1\wedge t^{\nicefrac12}$,
$\E ( |u_t(x)-u_t(y)|^{2k} ) \le C \|x-y\|^{ \varpi k}$,
where  $C:=C(k\,,\varkappa\,,t\,,d\,,\alpha)$ is positive and finite and $\varpi=\min(1, 2-\alpha)$.
Now a quantitative form of the Kolmogorov continuity theorem 
\cite[(39), p.\ 11]{Minicourse}
tells us that uniformly for all hypercubes $T\subset\R^d$ of side length
$\le d^{-1/2} (1\wedge t^{\nicefrac12})$, and for all $\delta\in(0\,,1\wedge t^{\nicefrac12})$,
\begin{equation}
	\E\left(\sup_{\substack{x,y\in T\\ \|x-y\|\le \delta}}
	\left\vert u_t(x)-u_t(y)\right\vert^{2k}\right)
	\le \textnormal{const}\cdot
	(\delta^{\varpi} k)^{ k}
	\exp\left(\frac{\overline{c}t (2k)^{(4-\alpha)/(2-\alpha)}}{%
	\varkappa^{\alpha/(2-\alpha)}}\right),
\end{equation}
where ``const'' depends only on $(\varkappa\,,t\,,d\,,\alpha)$. 
We now split $[0\,,R]^d$ into sub-cubes of sidelength $\textnormal{const}
\cdot (1\wedge t^{\nicefrac12})$, each of 
which is contained in a ball of radius $(1\wedge t^{\nicefrac12})/2$. Let $\mathcal{C}_R$ denote the collection of mentioned subcubes and $\mathcal{M}_R$, the set of midpoints of these subcubes. We can then observe
the following:
\begin{equation}
	\P\left\{ u_t^{*}(R) >2\mathcal{E}_M\right\} 
	\le \sum_{x \in \mathcal{M}_R} \P\left\{
	|u_t(x)|>\mathcal{E}_M \right\}
	+\sum_{T\in \mathcal{C}_R} \P\left\{
	\mathop{\textnormal{Osc}}\limits_T(u_t)>\mathcal{E}_M\right\},
\end{equation}
where $\textnormal{Osc}_T(g):=\sup_{x,y\in T}|g(x)-g(y)|$,
and $c$ depends only on $(t\,,d)$.  
In this way we find that
\begin{equation}
	\P\left\{ u_t^*(R) >2\mathcal{E}_M \right\}
	\le AR^d\times \left[\frac{\e^{Ak^{(4-\alpha)/(2-\alpha)}
	\varkappa^{-\alpha/(2-\alpha)}}}{\e^{Mk(\log R)^{2/(4-\alpha)}
	\varkappa^{-\alpha/(4-\alpha)}}}\right],
\end{equation}
where $A\in(0\,,\infty)$ is a constant that
depends only on $(t\,,\varkappa\,, \alpha\,, d)$.
Finally, we choose $k :=\varkappa^{\alpha/(4-\alpha)} 
(\log R )^{(2-\alpha)/(4-\alpha)}$ and $M$ large enough to ensure that
$\P\{ u^*_t(R)>2\mathcal{E}_M\}= O(R^{-2})$ as $R\to\infty$.
An application of Borel-Cantelli lemma proves the result.
\qed

\subsection*{Acknowledgements}
We thank Professor Firas Rassoul-Agha for providing us with 
reference \cite{KZ} and a number of insightful conversations.

\begin{small}
\bigskip

\noindent\textbf{Daniel Conus},
\noindent Department of Mathematics, Lehigh University,
		Bethlehem, PA 18015. \\
		
\noindent\textbf{Mathew Joseph and Davar Khoshnevisan},\\
\noindent Department of Mathematics, University of Utah,
		Salt Lake City, UT 84112-0090 \\ 
		
\noindent\textbf{Shang-Yuan Shiu}, Institute of Mathematics, Academia Sinica, Taipei 10617\\
		
\noindent\emph{Emails} \& \emph{URLs}:\\
	\indent\texttt{daniel.conus@lehigh.edu},\hfill
		\url{http://www.lehigh.edu/~dac311/}\\
	\indent\texttt{joseph@math.utah.edu}, \hfill
		\url{http://www.math.utah.edu/~joseph/}\\
	\indent\texttt{davar@math.utah.edu}\hfill
		\url{http://www.math.utah.edu/~davar/}\\
	\indent\texttt{shiu@math.sinica.edu.tw}
\end{small}

\end{document}